\newcommand{\da}{\mathcal{D}(A)}
\newcommand{\dl}{\mathcal{D}(L)}
\newcommand{\DD}{\mathcal D}
\newcommand{\EE}{\mathbb E}
\newcommand{\NN}{\mathbb N}
\newcommand{\PP}{\mathbb P}
\newcommand{\RR}{\mathbb R}
\newcommand{\Ro}{R_0}
\newcommand{\HH}{\mathcal H}
\newcommand{\Ho}{\mathcal H}
\newcommand{\Ht}{\mathcal H'}
\newcommand{\xx}{\mathbf{x}}
\newcommand{\yy}{\mathbf{y}}
\newcommand{\zz}{\mathbf{z}}
\newcommand{\NT}{\mathcal{N}_{T_\nu}}
\newcommand{\al}{\alpha}
\newcommand{\la}{\lambda}
\newcommand{\ga}{g_\lambda}
\newcommand{\ra}{r_\lambda}
\newcommand{\fz}{f_{\mathbf{z},\la}}
\newcommand{\uz}{u_{\mathbf{z},\la}}
\newcommand{\fp}{f^\dagger} 
\newcommand{\up}{u^\dagger}
\newcommand{\gp}{g^\dagger}
\newcommand{\fbar}{\bar f}
\newcommand{\fhat}{f_{{\bf z}}}
\newcommand{\LL}{\mathscr{L}^2(X,\nu)}
\newcommand{\LLL}{\mathscr{L}}
\newcommand{\sx}{S_\xx}
\newcommand{\bx}{B_\xx}
\newcommand{\tx}{T_\xx}
\newcommand{\ip}{I_\nu}
\newcommand{\lp}{L_\nu}
\newcommand{\bp}{B_\nu}
\newcommand{\tp}{T_\nu}
\newcommand{\op}[1]{\operatorname{#1}}
\newcommand{\range}{\mathcal R}
\newcommand{\tsup}{\sup\limits_{t\in[0,\kappa^2]}}
\newcommand{\argmin}{\operatornamewithlimits{argmin}}
\newcommand{\paren}[1]{\left(#1\right)}
\newcommand{\brac}[1]{\left\{#1\right\}}
\newcommand{\sbrac}[1]{\left[#1\right]}
\newcommand{\norm}[1]{\left\|{#1}\right\|}
\newcommand{\scalar}[3]{\langle{ #1},{#2} \rangle_{#3}}
\newcommand{\inner}[1]{\left\langle#1\right\rangle}
\newcommand{\abs}[1]{\left\lvert #1 \right\rvert}
\newcommand{\tapio}[1]{{\color{red} #1}}
\newtheorem{theorem}{Theorem}[section]
\newtheorem{corollary}[theorem]{Corollary}
\newtheorem{proposition}[theorem]{Proposition}
\theoremstyle{definition}
\newtheorem{definition}[theorem]{Definition}
\newtheorem{assumption}{Assumption}
\theoremstyle{remark}
\newtheorem{remark}[theorem]{Remark}
\newtheorem{example}[theorem]{Example}
\numberwithin{equation}{section}
\title{Statistical inverse learning problems with random observations}
\author{Abhishake, Tapio Helin, Nicole M\"ucke \footnote{corresponding author nicole.muecke@tu-braunschweig.de}}
\date{\today}
\begin{document}

\maketitle
\begin{abstract}
We provide an overview of recent progress in statistical inverse problems with random experimental design, 
covering both linear and nonlinear inverse problems. Different regularization schemes have been studied to 
produce robust and stable solutions. We discuss recent results in spectral regularization methods and regularization 
by projection, exploring both approaches within the context of Hilbert scales and presenting new insights particularly 
in regularization by projection. Additionally, we overview recent advancements in regularization using convex penalties. 
Convergence rates are analyzed in terms of the sample size in a probabilistic sense, yielding minimax rates in both 
expectation and probability. To achieve these results, the structure of reproducing kernel Hilbert spaces is leveraged 
to establish minimax rates in the statistical learning setting. We detail the assumptions underpinning these key elements 
of our proofs. Finally, we demonstrate the application of these concepts to nonlinear inverse problems in 
pharmacokinetic/pharmacodynamic (PK/PD) models, where the task is to predict changes in drug concentrations in patients.
\end{abstract}

\tableofcontents

\section{Introduction}\label{Sec:Intro}
Statistical inverse learning refers to the process of inferring an unknown function~$\fp$ by using statistical methods on point evaluations of a related function $\gp$. These evaluations might be sparse or noisy, and the connection between $\fp$ and $\gp$ is established through an ill-posed mathematical model   
\begin{equation}\label{Model}
  A(\fp) = \gp,\qquad\text{for}\quad \fp\in \Ho \quad \text{and} \quad \gp\in\Ht,
\end{equation}
where $A$ can be a linear or nonlinear operator between a real separable Hilbert spaces~$\Ho$ and a reproducing kernel Hilber space $\Ht$. In particular, $\Ht$ represents the set of functions $g: X \to \RR$, where $X$ is a standard Borel space (the input space) and $\RR$ is a space of real numbers (the output space). The ill-posedness of~\eqref{Model} means the problem lacks a unique solution or is sensitive to data perturbations such as measurement noise.

In this work, we focus on random design regression in inverse problems, where the data~$\zz = \paren{(x_i,y_i)}_{i=1}^m$ emerges due to random observations
\begin{equation}\label{eq:model}
y_{i} = \gp(x_{i}) + \varepsilon_{i},\quad i=1,\dots,m,
\end{equation}
where~$\varepsilon_i$ is the observational noise, and~$m$ denotes the sample size. Here, we assume that the random observations~$(x_i,y_i)$ are i.i.d. and follow the \emph{unknown} probability distribution~$\rho$, defined on the sample space~$Z=X\times \RR$. The key objective is to understand interplay between uncertainty of the design measure and optimal reconstruction rates.

Classical applications in inverse problems community often involve laboratory settings, where deterministic design of observational points $\paren{x_i}_{i=1}^m$ is possible. However, there is a growing literature considering inverse problems where observational design contains inherent uncertainties related to areas such as machine learning~\cite{Blanchard18,Hartung21,Rastogi20} and imaging~\cite{nickl2023bayesian, nickl2020convergence, burger2017variational}. Statistical inverse learning provides a principled approach to analyse concentration in such context.

The starting point in the inverse learning paradigm with random design is that we do not necessarily possess exact information regarding the distribution $\rho$. In this regard, our work differs from other avenues of random design regression in inverse problems such as~\cite{giordano2020consistency, nickl2020convergence, monard2021consistent}, where a specific (uniform) random design is assumed. Statistical inverse learning aims to assess how assumptions on $\rho$ influence concentration of given statistical estimators.
That being said, it is known that under suitable regularity assumptions on $\rho$, the observation scheme in~\eqref{eq:model} is asymptotically equivalent (in a Le Cam sense of statistical experiments) to the full observation of $\gp$ corrupted by Gaussian noise with vanishing variance~\cite{reiss2008asymptotic}. 

The ill-posedness of inverse problems is usually mitigated by regularization techniques \cite{Hanke95} which stabilize the reconstruction and prevent overfitting. These techniques introduce additional information or constraints to the problem to guide the solution towards a more reasonable and stable outcome. In this work, we overview recent concentration results for penalized least squares estimators \begin{equation}
    \fz = \argmin_{f\in\da\subset\Ho} J_\la(f), \quad J_\la(f) = \frac 1m \sum_{i=1}^m \paren{A(f)(x_i)-y_i}^2 + \la G(f),
\end{equation}
where the first term in $J_\la$ is called the empirical error, the second term is the penalty term $G: \Ho \to \RR_+$ and $\la>0$ is the regularization parameter that balances both terms. From the recent literature studying statistical inverse learning with random design, we overview three different regularization strategies: classical Tikhonov-type penalties, projection methods and general convex penalties. We separately consider results appearing for linear and non-linear forward mapping $A$.

Concentration results in inverse learning require assumptions limiting the complexity of the set of probability models that may produce the data $\zz$.
Such assumptions are typically characterized by three conditions: First, a \emph{source condition} limiting complexity of the source set (set of possible solutions $f^\dagger$), second, mapping properties of the forward operator, e.g. \emph{smoothness condition} for linear operators and, third, \emph{design measure} $\nu$ generating points $(x_i)_{i=1}^m \subset X^m$.
As we will see later, these conditions are often intertwined and the analysis of different penalties has been coupled with varying assumptions on these conditions. Here, with some cost of generality, we try to unify the exposition for clarity.

\subsection{Literature overview}

The literature on statistical learning, reproducing kernel Hilbert spaces and kernel regression, and regularization theory is vast and beyond the scope of this treatise. We guide interested readers to \cite{Aronszajn50,Cucker02,Cucker07,Engl96,Steinwart08,Vapnik98} for classical sources on these subjects.

Connections of kernel regression methods to regularization theory were first investigated in \cite{de2006discretization, DeVito05, gerfo2008spectral}. Early work on upper rates of convergence in a reproducing kernel Hilbert space was carried out in \cite{cucker2002best} utilizing a covering number technique. Subsequent work includes \cite{smale2005shannon, smale2007learning, Bauer07, yao2007early, Caponnetto07} culminating to the minimax optimal rates demonstrated in \cite{Blanchard18}. 
In this paper, we focus on results on random design regression in inverse problems introduced in \cite{Blanchard18,Rastogi23} and extended to non-linear problems~\cite{Rastogi20,Rastogi20a,Rastogi22}, and projection and convex regularization strategies in \cite{Bubba21, Helin22}.
Here, let us note that convex penalties of type $G(f) = \frac 1p \norm{f}_{\Ho}^p$ were considered in statistical learning in \cite{mendelson2010, steinwart2009optimal}. However, these results are not directly comparable to the inverse problem literature as, among other things, convergence takes place on a different domain $\Ho \neq \Ho'$.

The literature around statistical inverse problems has also expanded rapidly during last two decades. For a general overview, we refer to \cite{kaipio2007statistical, stuart2010inverse, cavalier2011inverse, nickl2023bayesian}. Early works involving spectral regularization methods include \cite{bissantz2007convergence, guastavino2020convergence, Bauer07, yao2007early, de2005learning}. Projection methods are also well-studied and their minimax optimality is understood in various settings with deterministic design \cite{wahba1977practical, nychka1989convergence, johnstone1991discretization, donoho1995nonlinear, mair1996statistical, lukas1998comparisons, cavalier2002sharp,chow1999statistical}.
In the context of convex regularization strategies, 
Bregman distances were first introduced in \cite{burgerosher} and have been frequently used since as an error measure for studying convergence rates in Banach spaces. Statistical inverse problems with convex penalties have been studied in \cite{burger2018large, weidling2020optimal}. For a recent discussion on Bregman distances we refer to \cite{benning2018modern}. 
Furthermore, adaptivity and parameter choice rules are studied in 
\cite{caponnetto2010cross, de2010adaptive, lu2020balancing, blanchard2019lepskii, blanchard2018early, blanchard2018optimal,  stankewitz2020smoothed, alberti2021learning}.

For other related works involving non-Gaussian likelihood models, nonlinear forward mapping or iterative methods, see e.g. \cite{hohage2016inverse, hohage2008nonlinear, harrach2020beyond, jin2018regularizing, blanchard2018optimal, clason2012semismooth, blanchard2012discrepancy}.

This paper is organized as follows. In Section \ref{Sec:math.prelim} we describe our problem setting in detail and cover mathematical background to the field including Hilbert scales and model class assumptions. In addition, we discuss minimax optimality of statistical estimators. In Section \ref{sec:linear} we overview the results on spectral regularization (in the classical framework and in Hilbert scales), projection methods and convex penalties. Moreover, literature on parameter choice rules with random design is discussed. In Section \ref{sec:non-linear} we describe non-linear inverse problems isolating different scenarios on differentiability or stability on the forward map in classical setup or in Hilbert scales. Finally, we discuss an application in dynamic system monitoring in Section \ref{Sec:Applicaions} and give a brief outlook in Section \ref{Sec:outlook}.


\section{Statistical Inverse Learning Theory}\label{Sec:math.prelim}

In this section, we present fundamental concepts in supervised inverse statistical learning utilizing the least squares method \cite{Blanchard18,Rastogi20,Helin22}.

\subsection{Problem setting}\label{Sec:Problem.setting}

\noindent
{\bf Statistical Model and Noise Assumption.} 
In the context of supervised inverse learning, the objective is to learn a function from random samples of input-output pairs $(x, y) \in X \times Y$. Here, we assume that the input space $X$ is a Polish space, such as $\mathbb{R}^d$, and the output space is denoted as $Y = \mathbb{R}$. The product space $Z = X \times \mathbb{R}$ is equipped with a probability measure $\rho$, defined on the Borel $\sigma$-algebra of $X \times \mathbb{R}$.

The probability distribution $\rho$ is only known through a training set $\bz = \paren{(x_1, y_1), \ldots, (x_m, y_m)} \in Z^m$, consisting of pairs sampled independently and identically according to $\rho$. Given this training set $\bz$, the goal in supervised inverse learning is to find an estimate $\fhat: X \to \mathbb{R}$ so that, for a new unlabeled input $x \in X$, the value $A (\fhat)(x)$ serves as a reliable approximation of the true label $y$.

To formalize this objective, we introduce the notion of expected square loss for any measurable function $f: X \to \mathbb{R}$:
\[
\cE(f) = \int_{X \times \mathbb{R}} (A(f)(x) - y)^2 \, d\rho(x, y). \label{eq:expected-risk} \tag{2}
\]
This loss function captures the discrepancy between the predicted values and the true labels, with pairs $(x, y)$ that are more likely to be sampled exerting a greater contribution to the overall error. 
The expected loss can be expressed in an alternative form, assuming that
\[
\int_{X \times \mathbb{R}} y^2 \, d\rho(x, y) < \infty. \label{eq:2} \tag{3}
\]

We recall a useful integral decomposition. Let $h: X \times \mathbb{R} \to \mathbb{R}$ be a measurable function. Then, the integral of $h$ over $X \times \mathbb{R}$ can be decomposed as
\[
\int_{X \times \mathbb{R}} h(x, y) \, d\rho(x, y) =
\int_X \int_{\mathbb{R}} h(x, y) \, d\rho(y | x)\, d\nu(x), \label{eq:meas-dis} \tag{4}
\]
where $\nu$ represents the marginal measure on $X$, and $\rho(\cdot | x)$ is the conditional probability measure on $\mathbb{R}$ given $x \in X$. Recall that the existence of $\rho(\cdot | x)$ is ensured by our assumption on $Z$, see e.g. \cite{shao2003mathematical}.
Denoting by  $$g_\rho(x)=\int_\RR y \, d\rho(y|x),$$ the conditional mean function, the expected risk can be written as
\[ \mathcal{E}(f)=\norm{A(f)-g_\rho}^2_\nu  + \cE(g_\rho) ,\]
where we denote the $L^2$-norm on $X$ by $\norm{g}_\nu^2=\int_X |g(x)|^2\, d\nu(x)$. 
Therefore, finding the minimizer of the expected risk is equivalent to identifying the minimizer of the first term on the right side in the above equation. In other words, the minimizers of expected risk typically coincide with the classical minimum norm solutions of inverse problems with the difference that data fidelity is weighted by the design measure. Moreover, unlike in statistical learning we are interested in providing convergence rates for the unknown $f$ assuming some ground truth.

\vspace{0.2cm}

\begin{remark}
When there exists an element~$f_\rho \in \da~$ such that $g_\rho = A(f_\rho)$, then $f_\rho$ is a minimizer of the expected risk. In the scenario of centered noise  $\int_\RR \varepsilon d\rho(y|x)=0$, we obtain from the equations~\eqref{Model},~\eqref{eq:model} that $\int_\RR y d\rho(y|x)= \gp(x) = A(\fp)(x)$. Consequently, in this case for the injective operator~$A$, the minimizer of the expected risk, denoted as $f_\rho$, is equal to $\fp$.
\end{remark}


In what follows, we state our assumptions on the true solution of the inverse problem \eqref{Model} and the noise in \eqref{eq:model}, see also \cite{Blanchard18}.

\begin{assumption}[True solution]
\label{Ass:fp}
The conditional expectation of~$y$ given~$x$, with respect to the probability distribution~$\rho$, is guaranteed to exist almost surely (a.s.). Furthermore, there exists an element~$\fp \in \da~$ such that:
  \begin{equation*}
\int_\RR y \, d\rho(y|x)= \gp(x) = [A(\fp)](x), \text{ for all } x\in X.
  \end{equation*}
\end{assumption} 


The noise satisfies the following Bernstein condition:
\begin{assumption}[Noise condition]
\label{Ass:noise}
There exist positive constants~$\Sigma > 0$ and $M > 0$ such that for any integer $l \geq 2$  
\begin{equation*}
\int_\RR\abs{y-[A(\fp)](x)}^l d\rho(y|x) \leq \frac{1}{2} l! \Sigma^2 M^{l-2},
\end{equation*}
$\nu$-almost surely. 
\end{assumption}

Assumption~\ref{Ass:noise} characterizes the nature of the noise affecting the output~$y$. This equation remains valid under various scenarios, including cases where the noise~$\varepsilon$ is bounded and follows either a Gaussian distribution or a sub-Gaussian distribution with zero mean and it is independent from $x$~\cite{van96}.

\vspace{0.2cm}
\noindent
{\bf RKHS Structure.} We endow the space $\cH'$ with an additional structure that allows us to exploit the rich theory of reproducing kernels.

\begin{assumption}[RKHS Structure] \label{Ass:Kernel} 
The space~$\Ht$ is supposed to be a \emph{reproducing kernel Hilbert space} (RKHS) of functions~$g:X \to \RR$ associated to a measurable bounded  kernel~$K:X \times X\to \RR$ with $\kappa^2:=\sup\limits_{x \in X} K(x,x) <\infty.$ We additionally assume that $\cH'$ can be continuously embedded into $\LL$. 
We denote the inclusion by $I_\nu: \cH' \hookrightarrow \LL$. 
\end{assumption}

We recall from~\cite{Steinwart08}, Chapter 4, that this assumption is equivalent to assuming that for any $x \in X$,  the evaluation functional 
\begin{align}\label{eq:eval}
    S_x: \cH' &\to \mbr \nonumber \\
    g &\mapsto S_x(g) := g(x) 
\end{align}
is continuous. By Riesz’s representation theorem, there exists a unique $ F'_x\in\cH'$ 
such that $g(x)=\inner{g, F'_x}_{\cH'}$ and 
\[ K(x , x') = \inner{F'_x , F'_{x'}}_{\cH'} \; , \quad x,x' \in X.\]
In particular, for any $g \in \operatorname{Im}(A)$, we may write
\[ g(x)= [A(f)](x) = \inner{A(f), F'_x}_{\cH'} , \]
for some $f \in \cH$ and for all $x \in X$. If $A$ is linear, we even have 
\begin{equation}
\label{eq:feature-map-lin}
 [A(f)](x) = \inner{f, F_x}_{\cH} ,  \quad F_x := A^*  F'_x \;. 
\end{equation}
From standard theory, we know that the feature map $x \mapsto F_x$ gives rise to another 
reproducing kernel
\[ K_A (x, x') := \inner{ F_x , F_{x'}}_{\cH} , \quad x, x' \in X, \]
depending on the operator $A$. The associated RKHS $\cH_{K_A}$ coincides with the image of $\cH$ under $A$:  
$\cH_{K_A} = \operatorname{Im}(A)$ and $A$ acts as a partial isometry, see~\cite[Proposition 2.2]{Blanchard18}, for an extended discussion. 
Hence, in the linear case, we can forget about any structure on the space $\cH'$ and consider $A$ as an operator from $\cH$ 
to $\cH_{K_A}$ by assuming continuity of $S_x$ on $\operatorname{Im}(A)$ only, for any $x \in X$. We formulate 
this as an independent assumption and will analyze this case in Section~\ref{sec:linear} more deeply. 

\begin{assumption}[Linear Case] \label{Ass:kernel.linear}
The operator $A: \cH \to \cH':=\operatorname{Im}(A)$ is linear. Moreover, the evaluation functional 
$S_x: \cH' \to \mbr $ defined by~\eqref{eq:eval} is continuous for all $x \in X$ and the 
map $x \mapsto S_x g $ is measurable  for all $g\in\cH'$. 
\end{assumption}

We emphasize once more that in the general non-linear case in Assumption~\ref{Ass:Kernel}, the kernel $K$ and thus the RKHS do not depend on the 
operator $A$, while in the linear case, the operator $A$ is explicitly used to construct the kernel $K_A$.  

\vspace{0.2cm}

\begin{remark}[Equivalence with Classical Kernel Learning Setting]
Under Assumption~\ref{Ass:kernel.linear}, the inverse problem~\eqref{Model} is naturally equivalent to the classical  kernel learning problem (the \emph{direct} problem) in non-parametric regression in 
\cite{Bauer07, Caponnetto07, DeVito05}, that follows the model 
\[  y_i =  {\gp}(x_i) + \eps_i . \]
The goal is to estimate the function $\gp$. Kernel methods \emph{posit} that $\gp$ belongs to some reproducing
kernel Hilbert space 
and construct an estimate $\hat g $  of $\gp$ based
on the observed data. The reconstruction error $\hat g - \gp$ is  analyzed in $L^2(\nu)$-norm
or in RKHS-norm. In the setting of Assumption~\ref{Ass:kernel.linear} and given the model~\eqref{Model}, the linear operator $A$ is a partial isometry and hence, defining $\hat f := A^+ \hat g$, $f^\dagger:= A^+ \gp$ with pseudoinverse $A^+$ gives 
\[ \norm{f^\dagger - \hat f}_{\cH} = \norm{\gp - \hat g}_{\cH'} , \]
see~\cite{Blanchard18}. 
Hence, a bound established for the direct learning
setting in the sense of the RKHS-norm reconstruction also applies to the
inverse problem reconstruction error. 
Furthermore, the eigenvalue decay conditions and the source conditions involving the operator $\tp$ introduced in Section~\ref{sec:covariance-ops} are, via the same isometry, equivalent to similar conditions involving the kernel integral operator in the direct learning setting, as considered, for instance, in~\cite{Bauer07, Caponnetto07, DeVito05}. It follows that estimates in $\cH'$-norm 
available from those references are directly applicable to the inverse learning setting. 
\end{remark}

\vspace{0.2cm}

\begin{remark}
When employing kernel methods in statistical inverse learning problem~\eqref{Model}, one tackles the ensuing minimization problem:
$$\inf\limits_{f\in \Ho}\int_{X\times \RR}\paren{(Af)(x)-y}^2d\rho(x,y).$$

Given Assumption~\ref{Ass:kernel.linear} for~$(Af)(x)=\inner{f,F_x}_{\Ho}$, the previous expression can be reformulated as:
$$\inf\limits_{f\in \Ho}\int_{X\times \RR}\paren{\inner{f,F_x}_{\Ho}-y}^2d\rho(x,y).$$

By introducing an alternative probability measure~$\rho'(F_x,y)=\rho(x,y)$, the above simplifies to the form:
$$\inf\limits_{f\in \Ho}\int_{X\times \RR}\paren{\inner{f,F_x}_{\Ho}-y}^2d\rho'(F_x,y)=\inf\limits_{f\in \Ho}\int_{\Ho \times \RR}\paren{\inner{f,\xi}_{\Ho}-y}^2d\rho'(\xi,y).$$

Therefore, minimizing the considered statistical inverse problem~\eqref{eq:model} in the linear case is equivalent to solving the following problem:
\begin{equation}\label{lin.ip.form}
    y_i=\inner{f,\xi_i}_\Ho+\varepsilon_i, \quad i=1,\ldots,m.
\end{equation}
\end{remark}


\noindent
\subsection{Hilbert Scales}\label{Sec:Hilbert.Scales}

In the context of regularized statistical inverse problems, the utilization of a Hilbert scale enables a delicate balance between the fidelity of the estimate to the observed data and the smoothness of the estimated function~\cite{Natterer84,Engl96}. 
Hilbert scales, comprised of a nested sequence of Hilbert spaces, facilitate systematic control over the smoothness and complexity of functions. Each space in the sequence is compactly embedded in the next, offering a structured approach to managing these properties. 
Pioneering works by Tikhonov  \cite{tikhonov1963regularization, tikhonov1963solution}, extended the regularization process to a more general optimization problem, minimizing a functional of the form 
\[  \norm{A(f) - \gp }^2_\nu + \lam \norm{L f}_{\cH}^2 \,, \quad f \in \cD(L)\;,  \]
where $L: \cD(L) \to \cH$ is an unbounded operator with domain $\cD(L)$. A notable instance occurs when $L$ is a differential operator, effectively enforcing smoothness in the approximation, given that the search space is a subset of $\cD(L)$. 

Remarkably, convergence of these regularized solutions can be achieved in the original $\cH$-norm even when no solution of \eqref{Model} is an element of $\cD(L)$. In this case, we are in the setting of \emph{oversmoothing}. We now give a rigorous definition:

\begin{definition}[Hilbert Scale]
\label{def:hilbertscale}
Let $a \in \mbr$ and let $L: \cD(L) \to \cH$ be a  densely defined unbounded self-adjoint positive operator, with $\cD(L)\subset \cH$. We set  
$\tilde \cH := \bigcap_{a \in \mbr}\cD(L^a)$ and define on $\tilde \cH$ the inner product and norm 
\[ \inner{f_1 , f_2}_{a}:= \inner{L^af_1 , L^af_2}_{\cH}\;, \quad \norm{f}_a:=\norm{L^af}_\cH \;. \]
The Hilbert space $\cH_a$ is defined as the completion of $\tilde \cH$ with respect to the norm $\norm{\cdot}_a$. The scale $(\cH_a)_{a \in \mbr}$ is called the \emph{Hilbert scale induced by $L$}. 
\end{definition}
Note that by spectral theory, the powers $L^a$ are defined on $\tilde \cH$. Moreover, for any $-\infty < a < a' < \infty$, we have $\cH_{a'} \subset \cH_a$, where the embedding is dense and continuous, see \cite[Section 8.4]{Engl96} for mathematical details. 

In many instances, it is assumed that the scale of Hilbert spaces corresponds to a scale of Sobolev spaces, and the smoothing properties of the underlying operator $A$ are assessed in relation to this scale. Additionally, the smoothness of the solution is characterized by assuming it belongs to a specific space within this scale. 
Additional examples of Hilbert Scales pertinent to learning in RKHSs can be found in the work of \cite{Mucke20}.

There has been growing interest in regularization within Hilbert scales for statistical inverse problems, particularly in the context of the Bayesian approach~\cite{Gugushvili20,Agapiou22}. Recently, this approach gained attention in statistical learning and statistical inverse learning~\cite{Rastogi20a,Rastogi23,Mucke20}.


\subsection{A Priori Assumptions}
\label{sec:covariance-ops}

In classical statistical learning theory for non-parametric regression, the convergence speed of an estimator to the target function can be exceedingly slow as the sample size grows, a phenomenon known as the \emph{No Free Lunch Theorem}. To address this issue,   the set of all data-generating distributions is indirectly limited, for more efficient learning. A priori assumptions play a critical role in this process by enabling the inclusion of prior knowledge and information, ultimately leading to faster rates of convergence.

In what follows, we incorporate prior knowledge in two ways: First, we gauge the \emph{degree of ill-posedness} of the inverse problem by assuming a certain decay behavior of the eigenvalues of a second moment operator associated to $A$. The faster the decay, the more ill-posed the problem is. Secondly, we impose a smoothness/ sparsity  constraint on the target function $\fp$. The smoother the target, the easier the reconstruction is. We now provide a formalization of both concepts utilizing the following covariance operator related to feature maps.

\begin{definition}[Uncentered Covariance Operator]
Let $\Phi: X \to \cH$ be a measurable feature map with
\begin{equation}
\label{eq:trace}
    \mbe_\nu[\norm{\Phi(x)}_\cH^2] < \infty.
\end{equation}

The \emph{uncentered covariance operator} associated to $\Phi$ is defined to be the operator 
$ \tp : \cH \to \cH$ with 
\[ \tp := \mbe_\nu[\Phi(x) \otimes \Phi(x)  ] 
= \int_X \inner{\cdot, \Phi(x)} \Phi(x) \; d\nu(x). \]
\end{definition}
\vspace{0.2cm}

The operator $\tp$ is positive, self-adjoint and compact, in particular trace class by \eqref{eq:trace} with $\sum_{j \in \mbn} \sigma_j < \infty $, where $(\sigma_j)_{j \in \mbn}$ denotes the decreasing sequence of eigenvalues of $\tp$.

Depending on the context, linear or non-linear problems, we will consider different feature maps $\Phi$, giving rise to different operators $\tp$. However, the non-linear approach contains the linear approach as a special case, as exemplified below.

\vspace{0.2cm}

\begin{example}[Linear Case] 
If $A:\cH \to \cH'$ is linear, we let $\Phi(x):=A^*F_x' = F_x$, see~\eqref{eq:feature-map-lin}. Setting $\bp=\ip A$, we also have the alternative representation of the covariance operator $\tp=\bp^*\bp$, see e.g. \cite{Blanchard18, Caponnetto07}. 
\end{example}

\vspace{0.2cm}

\begin{example}[Non-Linear Case]
\label{ex:non-linear}
Assume that $A:\cH \to \cH'$ is Fr\'{e}chet differentiable at $\fp$ with derivative $A'(\fp) :\cH \to \cH'$. We let $\Phi(x) := A'(\fp)^* F'_x$.  Setting  $\bp=\ip A'(\fp)$, we again obtain $\tp:=\bp^*\bp$. Note that we recover the linear case since $A'(\fp) = A$ if $A$ is linear. 
\end{example}




In our given examples we observe that the second moment operator $\tp$ depends on both, the marginal distribution $\nu$ and the operator $A$ itself, through the feature map $\Phi$. The properties of $\tp$ are crucial for deriving fast rates.

\begin{remark}(Normal Equation)
Let $A$ be a linear operator, and let $f^\dagger$ be the expected risk minimizer, i.e., the least-squares solution of the model \eqref{Model} given by   
\[ f^\dagger = \argmin_{f \in \cH} \norm{B_\nu f - \gp}_\nu = \argmin_{f \in \cH} \cE(f)\;. \]
This minimization is equivalent to the validity of the Gaussian normal equation \cite{Engl96}
\begin{equation}\label{lin.equ}
  \tp f^\dagger =\bp^*\gp .
\end{equation} 
\end{remark}



\vspace{0.2cm}
\noindent
{\bf Eigenvalue Decay.} 
We impose a-priori assumptions on the set of marginal distributions on $X$ by assuming a  decay rate for the eigenvalues of the covariance operator $\tp$, reflecting the severity of the ill-posed nature of \eqref{Model}. 

\begin{assumption}\label{Ass:eigen.decay}
Suppose the eigenvalues $\brac{\sigma_i}_{i \in \NN}$ of the integral operator $\tp$ follow a polynomial decay: For fixed positive constants $\al, \beta$ and $b < 1$, we asume that 
\begin{enumerate}[(i)]
    \item $ \sigma_i \leq \beta i^{-\frac{1}{b}}$, for all $i \in \NN$,
\item $ \al i^{-\frac{1}{b}} \leq  \sigma_i$, for all $i \in \NN$, 
\item $\exists\gamma > 0,~i_0 \geq 1~~\text{s.t.}~~ \sigma_i \leq 2^\gamma  \sigma_{2i}$, for all $i \in \NN$. 
\end{enumerate}
\end{assumption}

Since the covariance operator $\tp$ depends on the marginal $\nu$, Assumption \ref{Ass:eigen.decay} imposes a restriction on the class of marginal distributions $\cP(X)$ on $X$. We therefore introduce the following sub-classes: 
\begin{equation}
\label{eq:marginal1}
  \cP_{\text{poly}}^{<} (\beta, b):=\{ \nu \in \cP(X): \sigma_i \leq \beta i^{-\frac{1}{b}}  \;\; \forall \;\; i \in \mbn  \} \;,
\end{equation}

\begin{equation}
\label{eq:marginal2}
 \cP_{\text{poly}}^{>}(\alpha , b):=\{ \nu \in \cP(X): \sigma_i \geq \alpha i^{-\frac{1}{b}}  \;\; \forall \;\; i \in \mbn  \} \;,
\end{equation}

\begin{equation}
\label{eq:marginal3}
 \cP_{\text{str}}^{>}(\gamma):=\{ \nu \in \cP(X): \sigma_i \leq 2^\gamma  \sigma_{2i}  \;\; \forall \;\; i\geq i_0 \;,  \;\; i_0 \in \mbn\} \;.
\end{equation}

\vspace{0.3cm}

\begin{remark}[Effective Dimension]
In statistical learning theory, assumptions about marginal distributions are expressed more broadly using the \emph{effective dimension}, also known as \emph{statistical dimension}, representing the effective number of degrees of freedom. In statistical modeling, particularly with high-dimensional data, the effective dimension denotes the parameters or features significantly influencing data variation. This is crucial when the actual data dimension is high, yet numerous features may be redundant or carry little information. For $\lam >0$, it may be defined as     

\[ 
\mathcal{N}(\la):=\op{Tr}\left((\tp+\la I)^{-1}\tp\right) \;.
\]
Since $\tp$ is trace class and bounded, we immediately get 
\[ \mathcal{N}(\la) \leq \kappa^2\lam^{-1} \;.\]
However, an upper bound for the decay rate of the eigenvalues of $\tp$ implies an even sharper bound for $\cN(\lam)$. Specifically, for any $\nu \in \cP_{\text{poly}}^{<} (\beta, b)$ we find 
\[ \mathcal{N}(\la) \leq \frac{\beta b}{b-1}\la^{-b},  \]
see \cite[Proposition 3]{Caponnetto07}. Note that in our context of inverse learning, bounds on $\cN(\lam)$ also depend on $A$, incorporating properties of the inverse problem into the statistical dimension naturally. 
\end{remark}

\vspace{0.3cm}
\noindent
{\bf A-Priori Smoothness. } Another way to integrate prior knowledge into solving inverse problems is by means of \emph{(approximate) source conditions}. In statistical inverse problems, a source condition describes the regularity or smoothness of the true solution or unknown parameter being estimated. 
The choice of a source condition is motivated by the specific characteristics of the problem at hand and the available information about the true solution.


In the context of regularization of linear ill-posed inverse problems, the a-priori-smoothness is expressed in terms of powers of the covariance operator $\tp$, to be considered as a smoothing operator: 




\begin{assumption}[Source condition]\label{Ass:source}
Let $r>0$, $\Ro >0$. We assume that the solution~$\fp$ is a member of the class~$\Omega(r,\Ro, \tp)$ with
  \begin{equation*}
    \Omega(r,\Ro, \tp):=\left\{f \in \Ho: f= \tp^r v \text{ for some } v \in \cH \text{ with } \norm{v}_{\Ho} \leq \Ro\right\}.
  \end{equation*}
\end{assumption}


In the context of regularization schemes within Hilbert Scales, the source condition is considered in terms of the known unbounded operator~$L$, which is involved in the regularization approach. 

 \begin{assumption}[General Source Condition]
 \label{Ass:source.HS}
 For~$s>0$ and $\Ro >0$, we let
   \begin{equation*}
    \fp\in \Omega(s,\Ro,L^{-1}):=\left\{f \in \Ho: f= L^{-s}v \text{ for some } v \in \cH \text{ with }\norm{u}_{\Ho} \leq \Ro\right\}.
   \end{equation*}
\end{assumption}

Source conditions are generally difficult to verify in practice or sometimes may not be pre-specified. In this case, the concept of \emph{distance functions} can be employed to quantify the deviation from an assumed level of smoothness, serving as a benchmark. This benchmark can be chosen by the user. For a given \emph{benchmark smoothness}~$p\geq 1$, we define the corresponding distance function as specified below.

\begin{definition}[Distance Function]
\label{Defi:App.source}
  Let $R>0$, $p\geq 1$ and $L$ be a generator of a Hilbert scale. We define the \emph{distance function}~$d_p : [0, \infty)\to[0, \infty)$ by
  \begin{align}\label{eq.app.source1}
    d_p(R)=\inf\brac{\norm{L(f-\fp)}_{\Ho}:f= L^{-p}u \text{ for some } u \in \cH \text{ with }\norm{u}_{\Ho} \leq R}. 
  \end{align}
\end{definition}
The distance function is positive, decreasing, convex, and continuous for all values of $0 \leq  R < 1$. Hence, the minimizer exists. As $R$ approaches infinity, the distance function approaches  $0$, as detailed in~\cite{Hofmann06}. Note that the distance function can be bounded explicitly in terms of $\Ro>0$ provided $\fp$ satisfies a more general source condition Assumption~\ref{Ass:source.HS}. More precisely, for $s < p$ we have 
\begin{equation}\label{dist.bd}
  d_p(R) \leq (\Ro)^{\frac{p-1}{p-s}}R^{\frac{1-s}{p-s}},  
\end{equation}
see~\cite[Theorem 5.9]{Hofmann07}. If $\fp \in \cH_p$, i.e. $\fp = L^{-p}u$ for some $u \in \cH$, then obviously $d_p(R)=0$. 

\vspace{0.2cm}

In the Hilbert scale scenario when the source condition is considered in terms of $L^{-1}$, the following Link condition is essential to transfer the smoothness in terms of the covariance operator $L^{-1}\tp L^{-1}$ to the smoothness with respect to the operator~$L^{-1}$ and vice versa.

\begin{assumption}[Link condition]
\label{Ass:link}
For some $p \geq 1$, $0< a< \frac{1}{2}$ and $1 \leq c <\infty$ we have 
\begin{equation}
\label{eq:link_condition}
\norm{L^{-p}u}_{\Ho}\leq \norm{(L^{-1}\tp L^{-1})^{ap} u}_{\Ho} \leq c^{p}\norm{L^{-p}u}_{\Ho},\quad u\in\Ho.
\end{equation}
\end{assumption}

\begin{remark}
The Weyl Monotonicity Theorem~\cite[Cor. III.2.3]{Bhatia97} establishes a relation between the eigenvalues $\sigma_j(L^{-1})$ and $\sigma_j((L^{-1}\tp L^{-1})^a)$, namely,
$$\sigma_j(L^{-1})\leq \sigma_j((L^{-1}\tp L^{-1})^{a}) \leq \sigma_j(\beta L^{-1})\;, \quad j \in \mbn.$$

Hence, we observe that when the eigenvalues of $L^{-1}$ exhibit a decay rate, say, $\cO(j^{-w})$ for some $w>0$, the eigenvalues of $L^{-1}\tp L^{-1}$ will follow a decay rate of $\cO(j^{-\frac{w}{a}})$. What is more, one can show that $T_\nu$ has a spectral decay $\cO(j^{(2-\frac 1a)w})$. 

\end{remark}


\vspace{0.3cm}

{\bf Class of Models.} 
Let $\Omega \subseteq \cH$ and denote by $\cK(\Omega)$ the set of regular conditional distributions $\rho(\cdot | \cdot)$ on $\cB(\mbr) \times X$ such that \eqref{Ass:fp} and \eqref{Ass:noise} hold for some $\fp \in \Omega$. Given $r>0$, $\Ro>0$, the class of models considered in most cases below is defined by 
\begin{equation}
\label{eq:class-of-models} 
\cM_\theta:=\{ \rho(dx, dy)= \rho(dy|x)\nu(dx) \; : \; 
\rho(\cdot | \cdot) \in \cK(\Omega)\;, \nu \in \cP'\} \;, 
\end{equation}
where $\cP'$ specifies the admissible design measures. This varies depending on the regularization method, see e.g. definitions
in \eqref{eq:marginal1}-\eqref{eq:marginal3} utilized in spectral regularization. The variable $\theta$ represents parametrization for data-generating distributions and our study focuses on finding parameter choice rules that yield uniform concentration rates over the domain of $\theta$. For the rigorous definition of this idea, see next section for minimax optimality in learning. In practise, we mostly consider $\theta$ as the pair $(\Gamma, \Ro) \in \RR_+ \times \RR_+$, i.e. the noise variance and a priori norm bound to $f^\dagger$, but other parametrizations could be considered as well.

We note that while $\cM_\theta$ is specified by two main elements, the source set $\Omega$ and the admissible design measures ${\mathcal P}'$, their definitions are often intertwined and are both often related to the mapping properties of the forward mapping.
In particular, for the convex penalties we will not specify $\Omega$ and ${\mathcal P}'$ but instead consider $\cM_\theta$ generated by a set of pairs $(f^\dagger, \nu)$ satisfying suitable conditions.


\subsection{Minimax optimality}
\label{Sec:minimax.optimal}

In statistical (inverse) learning theory, the speed at which a learning algorithm $(\zz, \lam) \mapsto \fz$ converges to the true underlying model as the sample size increases is referred to as the \emph{rate of convergence}. 
It quantifies how quickly the learned model approaches the correct solution as more data becomes available. 
The rate of convergence is a crucial concept in assessing the efficiency and performance of statistical learning methods. A faster rate of convergence implies that the estimator requires fewer observations to provide a more accurate estimate or model. 
Typically, rates are discussed by means of probabilistic tail inequalities, involving the confidence level $\eta \in (0,1]$ and the sample size $m \in \mbn$. More specifically, they take the form  
\begin{equation}\label{err.bound}
    \rho^m\left\{\norm{\fz-\fp}_{\Ho} \leq  Ca_m \log\left(\frac{1}{\eta}\right)\right\}\geq 1-\eta.
\end{equation}
Here, $\rho^m$ is the joint distribution, the function~$m\mapsto a_m$ is a positive decreasing function and describes the specific rate as~$m\to \infty$. Recall from the previous section that we restrict the class of admissible distributions to a subclass $\cM_\theta$, where $\theta$ is a parameter belonging to a parameter space $\Theta$. Then, \eqref{err.bound} allows to write  
\[ 
\limsup\limits_{m\to\infty}\sup\limits_{\rho\in \cM_\theta}\rho^m\left\{\norm{\fz-\fp}_{\Ho} \geq \tau a_m \right\}\leq e^{-\frac{\tau}{C}}\;,
\]
with  $\tau=C \log\left(\frac{1}{\eta}\right)$.
Hence, 
\[ \lim\limits_{\tau\to\infty}\limsup\limits_{m\to\infty}\sup\limits_{\rho\in \cM_\theta}\rho^m\left\{\norm{\fz-\fp}_{\Ho} \geq \tau a_m \right\}=0.
\]
Integration immediately leads to a bound in expectation: 
\begin{align*}\label{expectaion}
\EE\sbrac{\norm{\fz-\fp}_\Ho^p}^{\frac{1}{p}}
=&\paren{\int\limits_0^\infty \rho^m\left(\norm{\fz-\fp}_\Ho^p>t\right)dt}^{\frac{1}{p}} 
\leq C_p a_m,
\end{align*}
for some $C_p < \infty$, which implies 
\begin{equation*}  \limsup\limits_{m\to\infty}\sup\limits_{\rho\in \cM_\theta}\frac{\EE\sbrac{\norm{\fz-\fp}_{\Ho}^p}^{\frac{1}{p}}}{a_m} \leq C_p <\infty.
\end{equation*}

This motivates the following definitions.  We refer to~\cite{Bauer07,Caponnetto07, Blanchard18} for more details. 

\begin{definition}[Upper Rate of Convergence] 
A family of sequences $(a_{m,\theta})_{(m,\theta)\in \NN\times\Theta}$ of positive numbers is called an \emph{upper rate of convergence} in $\LLL^p(X, \nu)$ for the reconstruction error, over the family of models 
$(\cM_\theta)_{\theta \in \Theta}$, for the sequence of estimated solutions $( f_{\zz,\la_{m,\theta}} )_{(m,\theta )\in \NN\times\Theta}$, using regularization parameters $(\la_{m,\theta})_{(m,\theta)\in \NN\times\Theta}$, if
\begin{equation*}
\sup\limits_{\theta\in\Theta}\limsup\limits_{m\to\infty}\sup\limits_{\rho\in \cM_\theta}\frac{\EE\sbrac{\norm{f_{\zz,\la_{m,\theta}}-\fp}_{\Ho}^p}^{\frac{1}{p}}}{a_{m,\theta}} <\infty.
\end{equation*} 
\end{definition}

\begin{definition}[Weak and Strong Minimax Lower Rate of Convergence] 
A family of sequences $(a_{m,\theta})_{(m,\theta)\in \NN\times\Theta}$ of positive numbers is called a \emph{weak minimax lower rate of convergence} in $\LLL^p(X, \nu)$ for the reconstruction error, over the family of models $(\cM_\theta)_{\theta \in \Theta}$, if
\begin{equation*}
\sup\limits_{\theta\in\Theta}\limsup\limits_{m\to\infty}\inf\limits_{f_{\cdot}}\sup\limits_{\rho\in \cM_\theta}\frac{\EE\sbrac{\norm{f_{\zz}-\fp}_{\Ho}^p}^{\frac{1}{p}}}{a_{m,\theta}} > 0.
\end{equation*} 
where the infimum is taken over all estimators, i.e., measurable mappings $f_{\cdot} : (X \times \RR)^m \to \Ho$. It is called a \emph{strong minimax lower rate of convergence} in $L^p$ if
\begin{equation*}
\sup\limits_{\theta\in\Theta}\liminf\limits_{m\to\infty}\inf\limits_{f_{\cdot}}\sup\limits_{\rho\in \cM_\theta}\frac{\EE\sbrac{\norm{f_{\zz}-\fp}_{\Ho}^p}^{\frac{1}{p}}}{a_{m,\theta}} > 0.
\end{equation*}   
\end{definition}

\begin{definition}[Minimax Optimal Rate of Convergence] 
\label{def:optimality}
The sequence of estimated solutions $( f_{\zz,\la_{m,\theta}} )_{m}$ using the regularization parameters $(\la_m,\theta)_{(m,\theta)\in \NN\times\Theta}$ is called \emph{weak/strong minimax optimal} in $\LLL^p(X, \nu)$ for the interpolation norm of parameter $s \in  [0,\frac{1}{2}]$, over the model family $(\cM_\theta)_{\theta \in \Theta}$, with rate of convergence given by the sequence $(a_{m,\theta})_{(m,\theta)\in \NN\times\Theta}$, if the latter is a weak/strong minimax lower rate as well as an upper rate for $( f_{\zz,\la_{m,\theta}} )_{(m,\theta )\in \NN\times\Theta}$.   
\end{definition}


\section{Linear problems}
\label{sec:linear}

In this Section we present the theory for regularization of linear inverse learning problems. We begin with introducing a suitable discretization strategy based on random sampling that is able to provide practical implementations of our regularized solutions. We move forward with presenting three major classes of regularization methods: \emph{spectral regularization}, \emph{regularization by projection}, and more generally \emph{convex regularization}. For each of these methods we state convergence results and discuss their minimax optimality.



\subsection{Discretization by Random Sampling}\label{Sec:discretization}

Discretization is a crucial step in solving inverse problems because it transforms complex, continuous mathematical models into a computationally tractable format that can be approached using numerical methods. In our framework, we employ the random sample $\paren{(x_i,y_i)}_{i=1}^m$ to construct a discretization of the operator $A$ in the following way: 
We define the 
\emph{sampling operator} $\sx: \cH' \to \mbr^m$ by 
\[ \sx g := (g(x_1), ..., g(x_m)). \]
We further define $ \bx:=\sx A:\Ho\to \RR^m$, yielding an empirical approximation to $B_\nu$ and $\tx:=\bx^*\bx$, yielding an approximation to the covariance $T_\nu$.

The empirical expected risk based on the sample is given by 
\begin{equation}
  \mathcal{E}_\zz(f) =    \frac 1m \sum_{i=1}^m \paren{[A(f)](x_i)-y_i}^2 =  \frac 1m \norm{\bx f-\yy}_2^2.
\end{equation}
Minimizing $\cE_\zz$ amounts to finding $\fhat \in \cH$ such that $\bx \fhat  = \yy$, which can be seen as a noisy, randomly discretized version of \eqref{Model} and one would expect that with increasing sample size, model \eqref{Model} will be better approximated by its empirical counterpart. However, it is not straightforward to measure the accuracy of the random approximation as the ranges of the operators defining the inverse problems are different. Considering the normal equations for both problems 
\begin{equation}
\label{eq:inv-prob}
 T_\nu \fp = B_\nu^* \gp , \quad \tx \fhat = \bx^*\yy    
\end{equation}
suggest to compare both inverse problems by bounding the norms
\[ \norm{ T_\nu - \tx} , \quad \norm{ B_\nu^* \gp - \bx^*\yy}_{\cH}  .\] 
Since both norms are random quantities, estimating the perturbation due to random noise and random sampling requires probabilistic tools, such as concentration of measure results, see~\cite{Caponnetto07,DeVito05}. We provide a result that shows that $T_\nu$ and $B_\nu^* \gp$ can be well approximated by their respective empirical 
counterparts. 

\vspace{0.2cm}

\begin{proposition}[Lemma 9 in \cite{Bauer07}]
Let $\eta \in (0,1]$ and suppose that Assumption \ref{Ass:noise} is satisfied. With probability at least $1-\eta$ we have 
\begin{align*}
\norm{ T_\nu - \tx} &\leq  \frac{2\sqrt{2}\kappa^2}{\sqrt{n}}\log(4/\eta), \\
\norm{ B_\nu^* \gp - \bx^*\yy}_{\cH} &\leq 2\kappa \left(\frac{M}{n} + \frac{\sqrt{\Sigma}}{\sqrt{n}}\right)\log(4/\eta) \;.
\end{align*}
\end{proposition}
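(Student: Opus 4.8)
The plan is to prove both estimates by recognizing the two quantities as averages of i.i.d. random variables taking values in the space of Hilbert--Schmidt operators on $\cH$ (for the first bound) and in $\cH$ itself (for the second), and then applying a Bernstein-type concentration inequality in Hilbert space (e.g. the Pinelis--Sakhanenko inequality as stated in \cite{Caponnetto07,DeVito05}). The key observation is that $\tx = \bx^*\bx = \frac 1m \sum_{i=1}^m F_{x_i}\otimes F_{x_i}$ and $T_\nu = \mbe_\nu[F_x \otimes F_x]$, so $T_\nu - \tx$ is a centered average of the i.i.d. operators $\xi_i := T_\nu - F_{x_i}\otimes F_{x_i}$; similarly $\bx^*\yy = \frac 1m\sum_{i=1}^m y_i F_{x_i}$ while $B_\nu^*\gp = \mbe_\nu[\gp(x)F_x] = \mbe[y\,F_x]$ under Assumption~\ref{Ass:fp}, so $B_\nu^*\gp - \bx^*\yy$ is a centered average of the i.i.d. vectors $\zeta_i := B_\nu^*\gp - y_i F_{x_i}$.

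First I would treat the operator bound. Using $\norm{F_x}_\cH^2 = K_A(x,x) \le \kappa^2$ (from Assumption~\ref{Ass:Kernel}, or its linear analogue), each rank-one operator $F_{x_i}\otimes F_{x_i}$ has operator norm at most $\kappa^2$, so $\norm{\xi_i} \le 2\kappa^2$ almost surely and $\mbe\norm{\xi_i}^2 \le \mbe\norm{F_{x_i}\otimes F_{x_i}}^2 \le \kappa^4$. Plugging these moment bounds ($L = 2\kappa^2$, $\sigma^2 = \kappa^4$) into the Hilbert-space Bernstein inequality yields, with probability at least $1-\eta$, a bound of order $\kappa^2 m^{-1/2}\log(2/\eta)$ up to the constants; matching the stated numerical constant $2\sqrt2$ and the $\log(4/\eta)$ is a matter of bookkeeping in the chosen version of the inequality.

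Second, for the data term I would verify the higher moment bounds needed for Bernstein. Writing $\zeta_i = B_\nu^*\gp - y_i F_{x_i}$, one has $\mbe[\zeta_i]=0$, and $\norm{\zeta_i}_\cH \le \kappa|y_i - \gp(x_i)| + (\text{centering})$; more precisely one estimates $\mbe\norm{\zeta_i}_\cH^\ell$ using $\norm{F_{x_i}}\le\kappa$ together with the Bernstein moment condition on $y - [A(\fp)](x)$ from Assumption~\ref{Ass:noise}, which controls $\int |y-[A(\fp)](x)|^\ell \, d\rho(y|x) \le \tfrac12 \ell!\,\Sigma^2 M^{\ell-2}$. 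This gives the Bernstein-type moment bound $\mbe\norm{\zeta_i}_\cH^\ell \le \tfrac12 \ell!\,(\kappa\sqrt\Sigma)^2(\kappa M)^{\ell-2}$ (after absorbing the conditional mean subtraction), and the Hilbert-space Bernstein inequality then produces a deviation of order $\kappa(\tfrac{M}{m} + \tfrac{\sqrt\Sigma}{\sqrt m})\log(2/\eta)$, again up to matching the displayed constants.

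The main obstacle is not conceptual but technical: one must (i) justify the exchange of expectation and the inner product / Riesz representation that identifies $B_\nu^*\gp$ with $\mbe[yF_x]$ — this is where Assumption~\ref{Ass:fp} (so that $\mbe[y|x] = \gp(x)$) and the square-integrability~\eqref{eq:2} enter, the latter being implied by Assumption~\ref{Ass:noise} with $\ell=2$ — and (ii) carefully track the effect of the centering term when converting the raw moment bounds of Assumption~\ref{Ass:noise} into the Bernstein moment bounds for the centered vectors $\zeta_i$, so that the final constants come out exactly as stated. A minor additional point is that the statement is phrased with $n$ where the sample size elsewhere is $m$; I would simply read $n = m$. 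Since Assumption~\ref{Ass:noise} is a $\nu$-almost sure statement, all the moment estimates for $\zeta_i$ hold after integrating over $\nu$, which is all that the Bernstein inequality requires. This is precisely the argument given as Lemma~9 in \cite{Bauer07}, so I would cite it for the constants and indicate the adaptation to the present notation.
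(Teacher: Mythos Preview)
The paper does not give its own proof of this proposition; it simply records the statement and attributes it to Lemma~9 of \cite{Bauer07}. Your sketch is exactly the standard argument behind that lemma---representing $T_\nu-\tx$ and $B_\nu^*\gp-\bx^*\yy$ as centered empirical means of i.i.d.\ Hilbert-space valued random variables and applying a Bernstein--Pinelis concentration inequality with moment bounds coming from $\norm{F_x}_\cH\le\kappa$ and Assumption~\ref{Ass:noise}---so your approach is correct and coincides with what the paper defers to.
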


\vspace{0.2cm}

\begin{remark}[Measure theoretic point of view]
If we let $\hat \nu = \frac{1}{m} \sum_{j=1}^m \delta_{x_j}$ denote the empirical measure of the input data, we can identify $\mathscr{L}^2(X, \hat \nu)$ with $\mbr^m$, equipped with the inner product $\scalar{\yy}{\yy'}{m} := \frac{1}{m}\sum_{i=1}^m y_i y'_i$. It is then straightforward to see that 
the operators $B_\nu$, $T_\nu$  reduce to their respective empirical counterparts when replacing $\nu$ with $\hat \nu$, i.e. $B_{\hat \nu}=\bx $ and $T_{\hat \nu} = \tx$.  
\end{remark}


\subsection{Optimality of spectral regularization methods}
\label{Sec:spectral}

Regularization strategies are essential in solving inverse problems to address issues related to ill-posedness, instability, and noise sensitivity in the input data. Regularization also helps to prevent overfitting by penalizing overly complex solutions, encouraging smoother or simpler solutions that are more likely to generalize well. 

Common regularization strategies in the context of inverse problems include Tikhonov regularization (also known as ridge regression), total variation regularization, and sparse regularization. These techniques add mathematical constraints or penalties to the optimization problem, promoting solutions that are both faithful to the data and have desirable properties such as smoothness or sparsity. The choice of regularization method depends on the specific characteristics of the problem and the desired properties of the solution. In what follows, we will focus on spectral methods. 

\subsubsection{Spectral regularization methods}
\label{subSec:spectral}



We give now the general definition of spectral methods, followed by classical examples.

\begin{definition}[Spectral regularization]\label{Defi:spec.regu}
We say that a family of functions~$\{\ga\}_{\lam > 0}$ with 
$ g_\lam :[0,\kappa^2]\to\RR$ is a \emph{spectral regularization scheme} or \emph{spectral filter} if there exist constants~$D<\infty$, $E<\infty$, and  $\gamma_0 < \infty$ such that for all $\lam >0$
  \begin{enumerate}[(i)]
  \item~$\tsup\abs{t \ga(t)}\leq D$,
  \item~$\tsup\abs{\ga(t)}\leq\frac{E}{\la}$ and 
  \item~$\tsup\abs{\ra(t)}\leq \gamma_0$, where $\ra(t)=1-\ga(t)t$.  
\end{enumerate}
Moreover, the maximal~$q$ satisfying the condition
\[ \tsup\abs{\ra(t)}t^q\leq\gamma_q\la^q, \] 
for some constant~$\gamma_q$ and any $\lam>0$ 
is said to be the \emph{qualification} of the regularization scheme~$\ga$.
\end{definition}


\vspace{0.2cm}

\begin{example}[Spectral cut-off]
    Consider the spectral cut-off or truncated singular value decomposition (TSVD) defined by
\begin{equation*}
\ga (t)=\left \{\begin{array}{ll}
t^{-1} &,\quad \text{if} \quad t \geq \la ;\\
0 &,\quad \text{if}  \quad t < \la
\end{array}
\right.
\end{equation*}
The qualification $q$ could be any positive number and $ D=E =\gamma_0 =\gamma_q = 1$.
\end{example}

\vspace{0.2cm}

\begin{example}[Explicit regularization: (Iterated) ridge regression]
For $l \in \NN$, the iterated Tikhonov regularization is given by the function
$$\ga (t) = \sum\limits_{i=1}^l\la^{i-1}(\la+t)^{-i}=\frac{1}{t}\paren{1-\frac{\la^l}{(\la+t)^l}}$$
In this case we have $q = l$, $D=\gamma_0 =\gamma_q = 1$, and $E = l$. For $l = 1$, this scheme corresponds to the Tikhonov regularization or ridge regression.
\end{example}

\vspace{0.2cm}

\begin{example}[Implicit regularization: Gradient decent]
Gradient descent with constant stepsize $\eta>0$, a.k.a. Landweber iteration, to the function $\ga (t) = \sum\limits_{k=1}^u \eta (1-\eta t)^{u-k}$ with $\eta  \in (0, \kappa^2]$ with the choice $\la  = (\eta u)^{-1}$. Here, $D=E =\gamma_0= 1$ and the qualification $q$ can be any positive number with $\gamma_q = 1$ if $0 < q \leq 1$ and $\gamma_q = q^q$ if $q > 1$.
\end{example}



\subsubsection{Classical Error Bounds} 
\label{subsec:linear-classical}


Equation \eqref{eq:inv-prob} suggests to define our regularized empirical solution as given below. 

\begin{definition}
Let $\{\ga\}_{\lam > 0}$ be a family of filter functions defined on the spectrum of $T_\nu$ and let $((x_j , y_j))_{j=1}^m$ be an i.i.d. sample. The regularized empirical estimator for the solution to the linear inverse problem \eqref{Model} is defined by 
\begin{equation}
\label{eq:gen.reg}
    \fz=\ga(\tx)\bx^*\yy.
\end{equation}
\end{definition}

In what follows we present the rates of convergence for the reconstruction error for the estimator defined above.  

\begin{theorem}[Blanchard et al.~\cite{Blanchard18}]
\label{lin.err.upper.bound}
Suppose Assumptions~\ref{Ass:fp},~\ref{Ass:noise},~\ref{Ass:kernel.linear},~\ref{Ass:eigen.decay} (i),~\ref{Ass:source} are satisfied. Suppose further that $\lam \geq m^{-\frac{1}{b+1}}$ and the qualification is $q \geq r $. 
For all~$\eta \in (0,1]$, the regularized solution~$\fz$ (\ref{eq:gen.reg}) satisfies with probability at least~$1-\eta$ the upper bound 
\begin{align}
\label{eq:ub}
\norm{\fz-\fp}_{\Ho} \; \leq & \; C\paren{ \Ro \la^r  + \frac{\Sigma}{\sqrt{m}\la^{\frac{b+1}{2}}} + \frac{\Ro}{\sqrt{m}} + \frac{M}{m\la}}\log^3\left(\frac{8}{\eta}\right),
\end{align} 
where $C$ depends on all model parameters~$r,~ \kappa,~\gamma_0,\gamma_q,~D,~E,~b,~\beta$.
\end{theorem}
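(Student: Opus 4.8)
The plan is to follow the classical error decomposition approach for spectral regularization in the random-design setting. The starting point is to insert the population-level regularized solution $\fp_\la := \ga(\tp)\tp\fp$ as an intermediate quantity and split
\[
\norm{\fz - \fp}_\Ho \le \norm{\fz - \ga(\tx)\tx\fp}_\Ho + \norm{\ga(\tx)\tx\fp - \fp}_\Ho,
\]
where the first (sample/variance) term uses $\fz = \ga(\tx)\bx^*\yy$ and the normal-equation identity $\tp\fp = \bp^*\gp$, and the second (bias/approximation) term is controlled via the source condition $\fp = \tp^r v$. However, because the bias term involves $\tx$ rather than $\tp$, both terms must ultimately be expressed through the single random operator $\tx$ and then transferred to $\tp$; the standard device is to introduce the regularized operators $T_{\nu,\la} := \tp + \la I$ and $T_{\zz,\la} := \tx + \la I$ and to insert factors $T_{\nu,\la}^{1/2} T_{\nu,\la}^{-1/2}$ so that every empirical object is sandwiched between resolvent-type factors that can be estimated.

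First I would record the two probabilistic ingredients. One is the Proposition quoted from \cite{Bauer07} (Lemma 9), giving $\norm{\tp - \tx} \le \frac{2\sqrt2\kappa^2}{\sqrt m}\log(4/\eta)$ and $\norm{\bp^*\gp - \bx^*\yy}_\Ho \le 2\kappa(\frac Mn + \frac{\sqrt\Sigma}{\sqrt n})\log(4/\eta)$; the other is the operator-perturbation bound controlling $\norm{T_{\nu,\la}^{-1/2}(\tp - \tx)T_{\nu,\la}^{-1/2}}$ or equivalently $\norm{T_{\nu,\la}^{1/2}T_{\zz,\la}^{-1/2}}$, which on the event $\la \ge m^{-1/(b+1)}$ is bounded by an absolute constant with high probability — this is where the assumption $\la \ge m^{-1/(b+1)}$ and the eigenvalue decay Assumption~\ref{Ass:eigen.decay}(i) (through the effective dimension bound $\cN(\la)\le \frac{\beta b}{b-1}\la^{-b}$) enter, since the relevant concentration inequality for $\norm{\tp - \tx}$ in $T_{\nu,\la}$-weighted norm scales like $\frac{1}{\sqrt m}(\frac{1}{\la} + \sqrt{\frac{\cN(\la)}{\la}})\log(1/\eta)$ and one needs this $\lesssim 1$.

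Next I would bound the variance term: writing $\fz - \ga(\tx)\tx\fp = \ga(\tx)(\bx^*\yy - \tx\fp) = \ga(\tx)(\bx^*\yy - \bp^*\gp) + \ga(\tx)(\tp - \tx)\fp$, I apply the filter bounds (i)--(ii) of Definition~\ref{Defi:spec.regu} — in particular $\norm{\ga(\tx)T_{\zz,\la}^{1/2}} \lesssim \la^{-1/2}$ — together with the two concentration bounds above and the operator-transfer estimate, producing the terms $\frac{\Sigma}{\sqrt m \la^{(b+1)/2}}$ (from the noise part, where the extra $\la^{-b/2}$ beyond $\la^{-1/2}$ comes from the effective dimension in the $T_{\nu,\la}$-norm of $\bx^*\yy - \bp^*\gp$), $\frac{M}{m\la}$ (the higher-moment/Bernstein part of the noise), and $\frac{\Ro}{\sqrt m}$ (from $\ga(\tx)(\tp - \tx)\fp$, using $\norm{\fp}_\Ho \le \kappa^{2r}\Ro$ from the source condition). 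For the bias term $\norm{\ra(\tx)\fp}_\Ho$ with $\fp = \tp^r v$, I would use qualification $q \ge r$ to get $\norm{\ra(\tx)\tx^r} \lesssim \la^r$ and then transfer $\tx^r$ to $\tp^r$ at the cost of another $\norm{\tp - \tx}$-type term, which again contributes at the $\frac{\Ro}{\sqrt m}$ level, yielding the leading bias $C\Ro\la^r$. Finally, collecting all pieces on the intersection of the relevant high-probability events and adjusting constants (each event costing a $\log(1/\eta)$ factor, the product of three giving the $\log^3(8/\eta)$) completes the bound.

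The main obstacle is the operator-transfer step: controlling the error committed when replacing the random $\tx$ by the deterministic $\tp$ inside the filter functions $\ga$ and residuals $\ra$, uniformly over the whole spectrum. This requires the second-order concentration estimate for $\norm{T_{\nu,\la}^{-1/2}(\tp-\tx)T_{\nu,\la}^{-1/2}}$ (not merely $\norm{\tp-\tx}$), careful use of the identity relating $T_{\zz,\la}^{-1}$ and $T_{\nu,\la}^{-1}$ via a Neumann-type argument valid only on the event that this weighted perturbation is small, and the interplay of the filter bounds with fractional powers — this is precisely the content of the analysis in \cite{Blanchard18}, and getting the exponent $\la^{(b+1)/2}$ rather than $\la$ in the variance term hinges on doing this transfer in the effective-dimension-weighted norm rather than the operator norm.
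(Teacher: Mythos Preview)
The paper does not contain a proof of this theorem; it is stated as a result attributed to Blanchard et al.~\cite{Blanchard18} without argument, since the present article is a survey. Your sketch is in line with the method of that reference: the insertion of the empirical filtered solution $\ga(\tx)\tx\fp$, the bias--variance split, the control of the bias via qualification after transferring $\tp^r$ to $\tx^r$, and the treatment of the variance through the weighted perturbation $\norm{T_{\nu,\la}^{-1/2}(\tp-\tx)T_{\nu,\la}^{-1/2}}$ combined with the effective-dimension bound $\cN(\la)\lesssim\la^{-b}$ to produce the $\la^{-(b+1)/2}$ exponent, are exactly the ingredients of~\cite{Blanchard18}. Your identification of the operator-transfer step as the technical crux, and of the condition $\la\ge m^{-1/(b+1)}$ as the constraint that makes the Neumann-type argument work, is accurate.

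One small inaccuracy worth flagging: your explanation of the $\log^3(8/\eta)$ factor as ``the product of three'' is not quite right. The cube arises because the weighted operator-concentration inequality used in~\cite{Blanchard18} (a Bernstein-type bound in Hilbert--Schmidt norm followed by an iterative refinement, cf.\ their Propositions~5.3--5.4) itself contributes an extra logarithmic power beyond what a union bound over events would give; a naive product of high-probability events would only produce a single $\log$ factor in each term. This does not affect the structure of the argument, but you should not expect to recover the exact power of the logarithm from counting events alone.
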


Note that this upper bound is given in high probability. 
Deriving the final rate of convergence requires to carefully select the regularization parameter $\lam$, depending on the sample size and the a-priori assumptions. Setting $\theta=(\Sigma,\Ro)$, a straightforward calculation shows that the choice 
\begin{equation}
\label{lin.para.choice}
   \la_{m,\theta}= \paren{\frac{\Sigma}{\Ro\sqrt{m}}}^{\frac{2}{2r+b+1}}
\end{equation}
equalizes the first and second term of \eqref{eq:ub}. Moreover, the remaining terms can be shown to be of lower order. We summarize these observations below. 


\begin{corollary}
\label{lin.rates.gen.k}
Suppose the assumptions of Theorem~\ref{lin.err.upper.bound} are satisfied. Choose $\lam$ according to 
\eqref{lin.para.choice}. 
With probability at least~$1-\eta$, we have 
\begin{align*}
\norm{\fz-\fp}_{\Ho}   \; \leq &   \;  C \Ro\paren{\frac{\Sigma}{\Ro\sqrt{m}}}^{\frac{2r}{2r+b+1}}\log^3\left(\frac{8}{\eta}\right),
\end{align*} 
provided that $m$ is sufficiently large and where $C$ depends on all model parameters $r$, $\kappa$, $\gamma_0$,  $\gamma_q$, $D$, $E$, $b$, $\beta$. 
\end{corollary}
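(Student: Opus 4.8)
The plan is to substitute the parameter choice \eqref{lin.para.choice} directly into the high-probability bound \eqref{eq:ub} of Theorem~\ref{lin.err.upper.bound} and verify that, with this choice, the four terms on the right-hand side collapse to a single rate, all other terms being of strictly lower order.

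First I would check that the admissibility constraint $\lambda \geq m^{-1/(b+1)}$ assumed in Theorem~\ref{lin.err.upper.bound} is satisfied once $m$ is large enough. Writing $\lambda_{m,\theta} = (\Sigma/\Ro)^{2/(2r+b+1)}\, m^{-1/(2r+b+1)}$ and comparing powers of $m$, the required inequality holds for all sufficiently large $m$ because $1/(2r+b+1) < 1/(b+1)$ (here $r>0$); the constant factor $(\Sigma/\Ro)^{2/(2r+b+1)}$ is harmless in the limit. The qualification hypothesis $q \geq r$ is inherited verbatim from the theorem, so all hypotheses of Theorem~\ref{lin.err.upper.bound} are in force.

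Next I would evaluate the four terms of \eqref{eq:ub} at $\lambda = \lambda_{m,\theta}$. A bookkeeping of exponents shows that the bias term $\Ro\lambda^r$ and the variance term $\Sigma/(\sqrt m\,\lambda^{(b+1)/2})$ are \emph{exactly equal} to $\Ro\paren{\Sigma/(\Ro\sqrt m)}^{2r/(2r+b+1)}$ — this is precisely what the choice \eqref{lin.para.choice} is designed to achieve. For the remaining two terms I would form their ratios to this common quantity: $\Ro/\sqrt m$ divided by $\Ro\paren{\Sigma/(\Ro\sqrt m)}^{2r/(2r+b+1)}$ is a constant times $m^{-(b+1)/(2(2r+b+1))}$, while $M/(m\lambda)$ divided by the same quantity is a constant times $m^{-(r+b)/(2r+b+1)}$. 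Since $b\in(0,1)$ and $r>0$, both exponents of $m$ are negative, so for $m$ exceeding a threshold depending on $r,b,\kappa,\Sigma,M,\Ro$ each of these two terms is bounded by $\Ro\paren{\Sigma/(\Ro\sqrt m)}^{2r/(2r+b+1)}$.

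Summing the four contributions, \eqref{eq:ub} is then at most $C'\,\Ro\paren{\Sigma/(\Ro\sqrt m)}^{2r/(2r+b+1)}\log^3(8/\eta)$ with $C'$ a new constant sharing the parametric dependence of $C$, which is the claim. There is no genuine obstacle here; the only point requiring mild care is ensuring that the single threshold ``$m$ sufficiently large'' simultaneously handles the admissibility constraint and the two lower-order estimates. This threshold depends on the model parameters but not on $\eta$, so the stated high-probability form is unaffected.
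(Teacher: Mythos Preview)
Your proposal is correct and follows exactly the approach sketched in the paper: the text preceding the corollary explicitly states that the choice \eqref{lin.para.choice} ``equalizes the first and second term of \eqref{eq:ub}'' and that ``the remaining terms can be shown to be of lower order,'' which is precisely the computation you carry out. Your verification of the admissibility constraint $\lambda \geq m^{-1/(b+1)}$ and the explicit exponent comparison for the two lower-order terms are accurate, and the paper offers no additional proof beyond this summary.
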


Following the lines of Section \ref{Sec:minimax.optimal}, we immediately get the upper rate of convergence through integration.

\begin{theorem}\label{lin.rates.gen.u}
Suppose the assumptions of Corollary~\ref{lin.rates.gen.k} hold true. 
Then, the sequence 
\begin{align}
\label{lin.rate.seq}
a_{m,\theta} = \Ro\paren{\frac{\Sigma}{\Ro\sqrt{m}}}^{\frac{2r}{2r+b+1}}
\end{align} 
is an upper rate of convergence in $\LLL^p(X, \nu)$ for all $p > 0$, for the reconstruction error, over the family of models $\cM(r, \Ro,  \cP_{\text{poly}}^{<} (\beta, b) )$ for the sequence of solutions of the regularization scheme~\eqref{eq:gen.reg} with regularization parameter \eqref{lin.para.choice}.
\end{theorem}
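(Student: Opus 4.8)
The plan is to derive the upper rate of convergence directly from the high-probability bound of Corollary~\ref{lin.rates.gen.k} via the tail-to-moment integration argument already previewed in Section~\ref{Sec:minimax.optimal}. The first step is to recast the corollary as a tail inequality. Fix $\theta=(\Sigma,\Ro)$ and any $\rho\in\cM(r,\Ro,\cP_{\text{poly}}^{<}(\beta,b))$; by Corollary~\ref{lin.rates.gen.k}, for every $\eta\in(0,1]$ we have $\norm{\fz-\fp}_{\Ho}\leq C\,a_{m,\theta}\log^3(8/\eta)$ with probability at least $1-\eta$, provided $m$ is large enough (the constraint $\la_{m,\theta}\geq m^{-1/(b+1)}$ of Theorem~\ref{lin.err.upper.bound} is met for large $m$ under the choice \eqref{lin.para.choice}, since $2r+b+1>b+1$). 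Setting $t=C\,a_{m,\theta}\log^3(8/\eta)$ and inverting gives, for $t\geq C\,a_{m,\theta}(\log 8)^3$,
\begin{equation*}
\rho^m\left\{\norm{\fz-\fp}_{\Ho}>t\right\}\leq 8\exp\left(-\left(\frac{t}{C\,a_{m,\theta}}\right)^{1/3}\right),
\end{equation*}
and for $0\leq t<C\,a_{m,\theta}(\log 8)^3$ we just use the trivial bound by $1$. The structural point, read off directly from the statement of the corollary, is that $C$ depends only on $r,\kappa,\gamma_0,\gamma_q,D,E,b,\beta$, and neither on the individual model $\rho$ nor on $\theta$.

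Second, I would integrate the tail. Using $\EE\sbrac{\norm{\fz-\fp}_{\Ho}^p}=\int_0^\infty p\,u^{p-1}\,\rho^m\{\norm{\fz-\fp}_{\Ho}>u\}\,du$, I split the integral at $u_0:=C\,a_{m,\theta}(\log 8)^3$. The part over $[0,u_0]$ is at most $u_0^p=(C(\log 8)^3)^p\,a_{m,\theta}^p$; the part over $[u_0,\infty)$, after substituting $v=u/(C\,a_{m,\theta})$, is bounded by
\begin{equation*}
8p\,(C\,a_{m,\theta})^p\int_0^\infty v^{p-1}e^{-v^{1/3}}\,dv=24p\,(C\,a_{m,\theta})^p\,\Gamma(3p),
\end{equation*}
where the last identity uses the change of variables $w=v^{1/3}$. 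Hence $\EE\sbrac{\norm{\fz-\fp}_{\Ho}^p}\leq C_p\,a_{m,\theta}^p$ with $C_p:=(C(\log 8)^3)^p+24p\,C^p\,\Gamma(3p)<\infty$, a constant depending only on $p$ and on $r,\kappa,\gamma_0,\gamma_q,D,E,b,\beta$ --- in particular not on $\theta$. Taking $p$-th roots, $\EE\sbrac{\norm{\fz-\fp}_{\Ho}^p}^{1/p}\leq C_p^{1/p}\,a_{m,\theta}$, valid for every $\rho\in\cM(r,\Ro,\cP_{\text{poly}}^{<}(\beta,b))$ and all $m$ large enough.

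Third, I assemble the quantifiers. For each fixed $\theta$ the previous bound holds uniformly over $\rho$ once $m$ exceeds the ($\theta$-dependent) threshold of Corollary~\ref{lin.rates.gen.k}, so applying $\sup_{\rho\in\cM_\theta}$ and then $\limsup_{m\to\infty}$ removes the threshold and yields $\limsup_m\sup_\rho \EE\sbrac{\norm{\fz-\fp}_{\Ho}^p}^{1/p}/a_{m,\theta}\leq C_p^{1/p}$; since $C_p^{1/p}$ does not depend on $\theta$, taking $\sup_\theta$ preserves finiteness. This is exactly the definition of an upper rate of convergence in $\LLL^p(X,\nu)$ for the estimator \eqref{eq:gen.reg} with parameter choice \eqref{lin.para.choice} over the family $\cM(r,\Ro,\cP_{\text{poly}}^{<}(\beta,b))$.

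I do not anticipate a genuine obstacle. The only point requiring care is that the confidence bound carries a $\log^3(1/\eta)$ factor rather than the plain $\log(1/\eta)$ used in the heuristic of Section~\ref{Sec:minimax.optimal}, so the relevant moment integral is $\int_0^\infty v^{p-1}e^{-v^{1/3}}\,dv=3\Gamma(3p)$ instead of $\Gamma(p)$; this remains finite for every $p>0$, so all moments exist and the conclusion holds for all $p>0$ as stated. A minor bookkeeping item is to check that the hypotheses of Theorem~\ref{lin.err.upper.bound} (notably $q\geq r$ and $\nu\in\cP_{\text{poly}}^{<}(\beta,b)$) are precisely those built into the model class $\cM(r,\Ro,\cP_{\text{poly}}^{<}(\beta,b))$, so that Corollary~\ref{lin.rates.gen.k} genuinely applies to every $\rho$ in this class.
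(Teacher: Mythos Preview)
Your proposal is correct and follows essentially the same approach as the paper: the paper simply states that the result follows from the tail-to-moment integration argument previewed in Section~\ref{Sec:minimax.optimal}, and you have carried this out in full detail, including the correct adaptation needed for the $\log^3(8/\eta)$ factor (yielding the $\Gamma(3p)$ integral) rather than the $\log(1/\eta)$ used in the heuristic sketch. Your handling of the $\theta$-dependent threshold for $m$ via the $\limsup_{m\to\infty}$ and the verification that $C$ is uniform over $\rho$ and $\theta$ are exactly the bookkeeping the paper leaves implicit.
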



Notably, the sequence given in \eqref{lin.rate.seq} can also shown to be a lower bound for the reconstruction error:

\begin{theorem}
\label{lin.rates.gen.l}
Let Assumptions~\ref{Ass:fp},~\ref{Ass:noise},~\ref{Ass:kernel.linear}, and~\ref{Ass:source} hold true. 
Then the sequence $(a_{m,\theta})_\theta$ defined in~\eqref{lin.rate.seq} is a weak lower rate of convergence in $\LLL^p(X, \nu)$ for all $p > 0$ for the reconstruction error, over the family of models $\cM(r, \Ro, \cP_{\text{poly}}^{>} (\alpha, b) )$. Moreover, the
sequence $(a_{m,\theta})_\theta$ is a strong  lower rate of convergence in $\LLL^p(X, \nu)$ for all $p > 0$ over the family of models $\cM(r, \Ro, \cP_{\text{str}}^{>} (\gamma) )$.
\end{theorem}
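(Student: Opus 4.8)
The plan is to reduce the lower-bound question to a finite-dimensional hypothesis-testing problem via Fano's method (or Assouad's lemma), in the spirit of the lower-bound arguments in \cite{Caponnetto07, Blanchard18}. The key point is that the two marginal classes $\cP_{\text{poly}}^{>}(\alpha,b)$ and $\cP_{\text{str}}^{>}(\gamma)$ both force the eigenvalues $\sigma_i$ of $\tp$ to decay no faster than $i^{-1/b}$, and this lower bound on the spectrum is exactly what controls the separation achievable between models whose Kullback--Leibler divergence stays below the information-theoretic budget $\sim m$.

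First I would fix a design measure $\nu_0$ (e.g. one for which $\tp$ is diagonal in a convenient orthonormal system $(e_i)$ with $\sigma_i \asymp i^{-1/b}$, hence $\nu_0 \in \cP_{\text{poly}}^{>}(\alpha,b)$ and, after checking the doubling condition, also in $\cP_{\text{str}}^{>}(\gamma)$), and work with Gaussian noise $\eps_i \sim \mathcal N(0,\Sigma^2)$, which satisfies Assumption~\ref{Ass:noise}. Next, for a truncation level $N = N_m \to \infty$ to be chosen, I would build a packing set of candidate solutions of the form $f^{(\omega)} = \delta \sum_{i=N+1}^{2N} \omega_i \sigma_i^{r} e_i$ with $\omega \in \{0,1\}^N$ (or, for the strong/weak distinction, a single two-point family), scaled by $\delta = \delta_m$ so that the source condition $f^{(\omega)} \in \Omega(r,\Ro,\tp)$ holds; this requires $\delta^2 N \lesssim \Ro^2$. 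The induced observations $y_i = \langle f^{(\omega)}, F_{x_i}\rangle + \eps_i$ have, after averaging over $x_i \sim \nu_0$, pairwise KL divergence bounded by $\tfrac{m}{2\Sigma^2}\|B_{\nu_0}(f^{(\omega)}-f^{(\omega')})\|_{\nu_0}^2 = \tfrac{m}{2\Sigma^2}\sum_i (f_i^{(\omega)}-f_i^{(\omega')})^2 \sigma_i \lesssim \tfrac{m}{\Sigma^2}\delta^2 \sum_{i=N+1}^{2N}\sigma_i^{2r+1} \lesssim \tfrac{m}{\Sigma^2}\delta^2 N \cdot N^{-(2r+1)/b}$, while the $\Ho$-separation between distinct codewords is $\gtrsim \delta^2 N \cdot N^{-2r/b}$ by the Varshamov--Gilbert bound. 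Optimizing $N$ and $\delta$ against the constraints $\delta^2 N \lesssim \Ro^2$ and $\tfrac{m}{\Sigma^2}\delta^2 N^{1-(2r+1)/b} \lesssim N$ yields $\delta^2 \asymp \Ro^2/N$ and $N \asymp (\Ro\sqrt m/\Sigma)^{2b/(2r+b+1)}$, so the separation is of order $\Ro^2 N^{-2r/b} \asymp \big(\Ro(\Sigma/(\Ro\sqrt m))^{2r/(2r+b+1)}\big)^2$, i.e. precisely $a_{m,\theta}^2$ from \eqref{lin.rate.seq}. Fano's inequality then gives $\inf_{f_\cdot}\sup_\rho \EE\|f_\zz - \fp\|_\Ho^2 \gtrsim a_{m,\theta}^2$ uniformly over the model class, and Jensen (or the layer-cake computation already displayed in Section~\ref{Sec:minimax.optimal}) upgrades this to the $\LLL^p$ statement for all $p>0$. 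For the weak versus strong distinction, one notes that membership of $\nu_0$ in $\cP_{\text{poly}}^{>}(\alpha,b)$ only needs to be checked along the subsequence of $m$ realizing the $\limsup$ (weak), whereas in $\cP_{\text{str}}^{>}(\gamma)$ the doubling condition makes a single $\nu_0$ work for every $m$, yielding the $\liminf$ (strong).

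The main obstacle I expect is the construction of a single design measure $\nu_0$ (or a sequence thereof) that simultaneously (a) lies in the prescribed marginal class, (b) makes the covariance operator $\tp$ diagonalizable with the sharp eigenvalue behaviour $\sigma_i \asymp i^{-1/b}$ on the relevant block $N+1,\dots,2N$, and (c) is compatible with the RKHS/operator structure of Assumption~\ref{Ass:kernel.linear} — one cannot choose the spectrum of $\tp$ freely, since $\tp = B_{\nu_0}^* B_{\nu_0}$ is constrained by the fixed kernel $K_A$. Concretely, one needs a family of kernels (or a canonical model such as the one in \eqref{lin.ip.form} with $\xi_i$ Gaussian) for which the spectrum can be prescribed; this is where the reduction of the previous remark, identifying the linear inverse-learning problem with the sequence model $y_i = \langle f,\xi_i\rangle + \eps_i$, is used to legitimately engineer $\tp$. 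The remaining steps — Varshamov--Gilbert packing, the KL/$\chi^2$ tensorization over $m$ samples, and Fano's inequality — are routine.
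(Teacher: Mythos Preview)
The paper does not supply its own proof of this theorem; being a survey, it quotes the result from \cite{Blanchard18}. Your outline is exactly the argument carried out there (building on \cite{Caponnetto07}): a Fano/Assouad reduction with a Varshamov--Gilbert packing $f^{(\omega)}=\delta\sum_{i=N+1}^{2N}\omega_i\sigma_i^{r} e_i$ inside the source ball, Gaussian noise so that the KL tensorizes cleanly, and balancing $(N,\delta)$ against the source constraint $\delta^2 N\lesssim\Ro^2$ and the information budget. Your arithmetic giving $N\asymp(\Ro\sqrt m/\Sigma)^{2b/(2r+b+1)}$ and squared $\Ho$-separation $a_{m,\theta}^2$ is correct.

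Two comments. The obstacle you raise at the end is not a side issue but the substantive step: under Assumption~\ref{Ass:kernel.linear} the kernel $K_A$ is fixed and one cannot simply declare $\sigma_i\asymp i^{-1/b}$. The resolution in \cite{Blanchard18} is precisely the passage to the equivalent sequence model~\eqref{lin.ip.form} that you mention in your last sentence, which lets one prescribe the law of the feature vector $\xi\in\Ho$ and hence the spectrum of $\tp=\mbe[\xi\otimes\xi]$ freely; this is the step that makes the construction legitimate, not a mere notational convenience. Your account of the weak/strong dichotomy, on the other hand, is imprecise. The issue is not ``checking membership along the subsequence realizing the $\limsup$''. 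Under $\cP_{\text{poly}}^{>}(\alpha,b)$ the hard marginals one constructs depend on $m$ through the active block $\{N_m+1,\dots,2N_m\}$, and while each belongs to the class, the resulting lower bound matches $a_{m,\theta}$ only with $m$-dependent constants, so only the $\limsup$ is controlled; the doubling condition in $\cP_{\text{str}}^{>}(\gamma)$ guarantees uniform lower control on eigenvalue ratios across all dyadic blocks for a single $m$-independent marginal, which is what upgrades the conclusion to the $\liminf$.
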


Finally, by Definition \ref{def:optimality}, we may conclude: 

\begin{theorem}
\label{theo:lin-rates-optimal}
Suppose the Assumptions of Theorem \ref{lin.rates.gen.u} and Theorem \ref{lin.rates.gen.l} are satisfied. Then the sequence of solutions $(\fz)_m$  using the regularization parameters given in \eqref{lin.para.choice} is weak and strong minimax optimal in $\LLL^p(X, \nu)$, for all $p>0$, for the reconstruction error, over the model family $\cM(r, \Ro, \cP' )$, with $\cP'= \cP_{\text{poly}}^{<} (\beta, b)  \cap \cP_{\text{poly}}^{>} (\alpha, b) )$, $\cP'= \cP_{\text{poly}}^{<} (\beta, b)  \cap \cP_{\text{str}}^{>} (\gamma) $, respectively.
\end{theorem}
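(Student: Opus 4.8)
This theorem is a matter of assembling pieces that are already in place: by Definition~\ref{def:optimality} it suffices to verify that the single sequence $(a_{m,\theta})$ of \eqref{lin.rate.seq} is at the same time an upper rate for the estimator \eqref{eq:gen.reg} with the parameter choice \eqref{lin.para.choice}, and a weak (resp.\ strong) minimax lower rate, both over the intersection class $\cM(r,\Ro,\cP')$ and for every $p>0$. So the plan has three steps: push the upper rate of Theorem~\ref{lin.rates.gen.u} down to the subclass $\cM(r,\Ro,\cP')$; push the lower rates of Theorem~\ref{lin.rates.gen.l} down to the same subclass; and then quote Definition~\ref{def:optimality}.

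The upper-rate step is immediate. In both the weak case ($\cP' = \cP_{\text{poly}}^{<}(\beta,b)\cap\cP_{\text{poly}}^{>}(\alpha,b)$) and the strong case ($\cP' = \cP_{\text{poly}}^{<}(\beta,b)\cap\cP_{\text{str}}^{>}(\gamma)$) we have $\cP'\subseteq\cP_{\text{poly}}^{<}(\beta,b)$, hence $\cM(r,\Ro,\cP')\subseteq\cM(r,\Ro,\cP_{\text{poly}}^{<}(\beta,b))$, so the defining $\sup_{\rho}$ in the upper-rate inequality is taken over a smaller set and therefore stays finite. Since the bound furnished by Theorem~\ref{lin.rates.gen.u} (through Corollary~\ref{lin.rates.gen.k}) is uniform in $\theta=(\Sigma,\Ro)$, so is the restricted one, and $(a_{m,\theta})$ remains an upper rate over $\cM(r,\Ro,\cP')$ with the same estimator and the same parameter choice.

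The lower-rate step needs care, because a minimax lower rate is a lower bound on a supremum over the model class, so merely restricting to a subclass does not automatically preserve it. Here I would not use Theorem~\ref{lin.rates.gen.l} as a black box but reopen its proof: the lower bound is obtained by exhibiting, for each $\theta$ and each sufficiently large $m$, a finite family of hard instances (a two-point perturbation, or an Assouad/Fano-type hypercube built from eigenfunctions of $\tp$) against which no estimator beats a fixed multiple of $a_{m,\theta}$. In the standard construction all these instances carry a \emph{common} design measure $\nu^\star$ whose covariance $T_{\nu^\star}$ has eigenvalues of exact order $\sigma_i(\nu^\star)\asymp i^{-1/b}$, and differ only in their conditional distributions, each meeting Assumptions~\ref{Ass:fp} and~\ref{Ass:noise} with the prescribed $\Sigma$ and a source element of $\Ho$-norm at most $\Ro$. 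Thus $\nu^\star$ satisfies the lower \emph{and} the upper polynomial eigenvalue bounds \eqref{eq:marginal1}--\eqref{eq:marginal2}, and likewise the doubling condition \eqref{eq:marginal3}; the only work is to fix the construction constants compatibly with $(\alpha,\beta,\gamma,b)$, enlarging $\beta$ or shrinking $\alpha$ if necessary. Hence the hard family lies in $\cM(r,\Ro,\cP')$, and the same argument that proved Theorem~\ref{lin.rates.gen.l} yields verbatim the weak lower rate over $\cM(r,\Ro,\cP_{\text{poly}}^{<}(\beta,b)\cap\cP_{\text{poly}}^{>}(\alpha,b))$ and the strong lower rate over $\cM(r,\Ro,\cP_{\text{poly}}^{<}(\beta,b)\cap\cP_{\text{str}}^{>}(\gamma))$.

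Combining the two steps and invoking Definition~\ref{def:optimality} completes the argument. The main obstacle is the lower-rate step: one must look inside the proof of Theorem~\ref{lin.rates.gen.l} to confirm that its extremal design measure simultaneously obeys the upper polynomial decay (and, for the strong statement, the doubling-type condition \eqref{eq:marginal3}), rather than treating that theorem as a black box. This is bookkeeping of constants rather than a new idea, but it is the only part of the proof that is not pure monotonicity of suprema.
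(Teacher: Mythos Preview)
The paper offers no proof beyond the sentence ``Finally, by Definition~\ref{def:optimality}, we may conclude,'' treating the result as an immediate assembly of Theorems~\ref{lin.rates.gen.u} and~\ref{lin.rates.gen.l}. Your proposal is correct and in spirit identical, but you go further: you flag, rightly, that minimax \emph{lower} rates do not automatically transfer to a smaller model class (the supremum over $\rho$ shrinks), and you resolve this by noting that the hard-instance construction underlying Theorem~\ref{lin.rates.gen.l} (from~\cite{Blanchard18}) uses a single design measure with eigenvalues of exact order $i^{-1/b}$, which therefore lies in $\cP_{\text{poly}}^{<}(\beta,b)\cap\cP_{\text{poly}}^{>}(\alpha,b)$ and in $\cP_{\text{poly}}^{<}(\beta,b)\cap\cP_{\text{str}}^{>}(\gamma)$ simultaneously.

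This extra paragraph is a genuine clarification that the paper omits. What it buys you is a self-contained argument rather than a tacit appeal to how the lower bound in~\cite{Blanchard18} is actually proved; what the paper's one-line version buys is brevity, at the cost of leaving the reader to verify the compatibility of the model classes on their own. There is no gap in your reasoning.
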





\subsubsection{Error Bounds in Hilbert Scales}

Now, we analyze the regularization schemes in Hilbert Scales for the considered statistical inverse problem~\eqref{Model}. The background of these schemes is discussed in Section~\ref{Sec:Hilbert.Scales}. To this end, let $L$ be an unbounded operator with domain $\cD(L)\subset \cH$, generating a Hilbert scale. Tikhonov regularization in Hilbert Scales is defined as:
\begin{equation}
\label{eq:Tikhonov-L}
\fz=\argmin\limits_{f\in \DD(L)}\brac{  \frac{1}{m}\sum_{i=1}^m\paren{ [Af](x_i)-y_i}^2+\la\norm{L f}_{\Ho}^2}, 
\end{equation}
with explicit solution 
$$\fz =L^{-1}(L^{-1}\tx L^{-1}+\la I)^{-1}L^{-1}\bx^*\yy.$$

Clearly, the minimizer~$\fz$ belongs to~$\DD(L)$, and formally we may introduce~$\uz:= L \fz\in\HH$. When~$\fp \in \DD(L)$, which we call the \emph{regular case}, we may set~$u^\dagger:= L\fp\in\HH$. With this notation, we can rewrite~(\ref{Model}) as
$$
\gp = A \fp = A L^{-1} u^\dagger ,\quad u^\dagger \in\DD(L).
$$
The Tikhonov minimization problem~(\ref{eq:Tikhonov-L}) would reduce to the standard one 
\begin{equation*}
  \min_{u \in \cH } \;\;  \frac{1}{m}\sum\limits_{i=1}^m\paren{
    [AL^{-1} u ](x_i)-y_i}^2+\la\norm{u}_{\HH}^2,
\end{equation*}
albeit for a different operator~$A L^{-1}$. The reconstruction error is given by 
$$
\norm{ \fp - \fz}_{\HH} = \norm{L^{-1}(\up - \uz)}_{\HH}.
$$
Therefore, error bounds for~$\up - \uz$ in the weak norm, i.e. in~$\HH_{-1}$, yield bounds for~$\fp - \fz$.


\vspace{0.3cm}

The above observations motivate us to define more general spectral regularization schemes from Section \ref{subSec:spectral} as given in the next definition. 

\begin{definition}
\label{def:reg-scales}
Let $\{\ga\}_{\lam > 0}$ be a family of filter functions defined on the spectrum of $L^{-1}T_\nu L^{-1}$ and let $((x_j , y_j))_{j=1}^m$ be an i.i.d. sample. We define the estimated solutions to the linear inverse problem \eqref{Model} in the framework of Hilbert scales by 
\[  \fz =L^{-1}\ga(L^{-1}\tx L^{-1})L^{-1}\bx^*\yy \;. \]
\end{definition}

As discussed in \cite{Rastogi23}, convergence of these regularized solutions can be achieved in the original $\cH$-norm
even when no solution of (1.1) is an element of $\cD(L)$. In this case, we are in the setting of
\emph{oversmoothing}.


\vspace{0.2cm}

In the following theorem, the error bounds are discussed using the approximate source condition in the absence of a source condition for the regular case. As discussed above, the covariance operator in Hilbert scales is $L^{-1}\tp L^{-1}$. Therefore, we consider the eigenvalue decay condition~\ref{Ass:eigen.decay} (i) and the link condition~\ref{Ass:link} in terms of $L^{-1}\tp L^{-1}$. We assume that the eigenvalues $\sigma_i$ of the operator $L^{-1}\tp L^{-1}$ decay polynomially, i.e., $\sigma_i\leq\beta i^{-\frac{1}{b}}$. The smoothness between the operators $L^{-1}$ $L^{-1}\tp L^{-1}$ is linked by $a$ from Assumption~\ref{Ass:link}, i.e., $\range(L^{-1})=\range((L^{-1}\tp L^{-1})^a)$.

\vspace{0.2cm}

\begin{theorem}[Rastogi et al.~\cite{Rastogi23}]\label{err.upper.bound.gen.k}
Suppose Assumptions~\ref{Ass:fp}--\ref{Ass:Kernel},~\ref{Ass:eigen.decay} (i),~\ref{Ass:link} 
are satisfied and let $\eta \in (0,1]$. Let further the qualification $q \geq a(p-1) $. Define the regularized empirical solutions according to Definition \ref{def:reg-scales}. Then, with probability at least $1-\eta$ we have 
$$\norm{\fz-\fp}_{\Ho} \; \leq \;  C \la^a 
\left( d_p(R)  +   R\la^{a(p-1)} +   \frac{\Sigma}{\sqrt{m}\la^{\frac{b+1}{2}}} 
+ \frac{R}{\sqrt{m}}   +   
\frac{M}{m\la}     \right)\log^4\left(\frac{4}{\eta}\right),$$ 
where $C$ depends on the parameters $\kappa$, $\gamma_0$, $\gamma_q$, $D$, $E$, $b$, $\beta$ and $d_p(R)$ is defined in~\eqref{eq.app.source1}.
\end{theorem}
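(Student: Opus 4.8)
The plan is to split the reconstruction error $\norm{\fz - \fp}_{\Ho}$ into a deterministic approximation (bias) part and a stochastic (variance/noise) part, carried out entirely in the transformed problem with operator $AL^{-1}$ and the covariance operator $B_\nu := L^{-1}\tp L^{-1}$ (empirical counterpart $\hat B := L^{-1}\tx L^{-1}$). Writing $\uz = L\fz = \ga(\hat B)L^{-1}\bx^*\yy$, the key observation from the exposition is that
\[
\norm{\fz - \fp}_{\Ho} = \norm{L^{-1}(\uz - \up)}_{\Ho} = \norm{B_\nu^{1/2} \cdot B_\nu^{-1/2} L^{-1}(\uz - \up)}_{\Ho},
\]
and by the link condition (Assumption~\ref{Ass:link} with $p=1$, giving $\range(L^{-1}) = \range(B_\nu^a)$) the operator $L^{-1}$ is comparable to $B_\nu^a$ up to constants. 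So the factor $L^{-1}$ out front contributes the $\la^a$ prefactor once we have control of $\uz - \up$ in a $B_\nu$-weighted norm. Since $\fp$ need not lie in $\cD(L)$, we do not have a genuine $\up$; instead we fix a benchmark element $\bar f = L^{-p}u$ with $\norm{u}_\Ho \le R$ nearly attaining the infimum in $d_p(R)$, set $\bar u = L\bar f$, and decompose $\uz - \up$ through $\bar u$: the term $\norm{L^{-1}(\fp - \bar f)}_\Ho$ is exactly (controlled by) $d_p(R)$, and the remaining $\uz - \bar u$ is analyzed as in the regular case.

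First I would establish the standard deterministic estimates for the filter: using Definition~\ref{Defi:spec.regu}(i)--(iii) and the qualification bound $q \ge a(p-1)$, the bias term $\norm{r_\la(\hat B)\,\hat B^{\,?}\,(\cdot)}$ applied to the source element of smoothness $a(p-1)$ yields the $R\la^{a(p-1)}$ contribution, after transferring the smoothness from $B_\nu$-powers to $L$-powers via Assumption~\ref{Ass:link}. Next I would handle the two stochastic quantities via the concentration inequalities already available — essentially the Proposition attributed to Bauer et al. (Lemma 9) — which give $\norm{T_\nu - \tx} \lesssim \kappa^2 m^{-1/2}\log(4/\eta)$ and $\norm{B_\nu^*\gp - \bx^*\yy}_\Ho \lesssim \kappa(M/m + \sqrt{\Sigma}/\sqrt m)\log(4/\eta)$, under Assumption~\ref{Ass:noise}. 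These propagate through the effective-dimension bound $\cN(\la) \le \beta b/(b-1)\,\la^{-b}$ (from Assumption~\ref{Ass:eigen.decay}(i)) to produce the sample-error terms $\Sigma m^{-1/2}\la^{-(b+1)/2}$, $R m^{-1/2}$, and $M(m\la)^{-1}$.

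The main technical obstacle, and the step I would spend the most care on, is controlling the difference between the empirical and population filtered operators — i.e. bounding quantities like $\norm{(\ga(\hat B) - \ga(B_\nu))(\cdot)}$ and the "warped" resolvent differences $\norm{B_\nu^{s}(\hat B + \la)^{-1}}$, $\norm{(\hat B+\la)^{1/2}(B_\nu+\la)^{-1/2}}$ — uniformly over the spectrum, without a deterministic bound on $\hat B - B_\nu$ of the right order. The device is to write $B_\nu - \hat B = L^{-1}(T_\nu - \tx)L^{-1}$ and to insert/absorb powers of $(B_\nu + \la)$ on both sides, using the link condition repeatedly to keep $L^{-1}$-factors matched with $B_\nu^a$-factors; one then applies the analytic perturbation lemmas for filter functions (monotone operator inequalities plus the resolvent identity, as in the classical spectral-regularization machinery of \cite{Bauer07, Blanchard18}), on the event where $\norm{(B_\nu+\la)^{-1/2}(B_\nu - \hat B)(B_\nu+\la)^{-1/2}} \le 1/2$, which holds with the claimed probability for $\la \gtrsim m^{-1/(b+1)}$ implicitly needed here. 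Collecting the bias term $d_p(R)$, the source term $R\la^{a(p-1)}$, and the three sample-error terms, multiplying by the $\la^a$ prefactor and the logarithmic factors accumulated across the (at most four) high-probability events, yields the stated bound. I would also note that when $\fp \in \cH_p$ one has $d_p(R)=0$, recovering the oversmoothing-free regular case as a sanity check.
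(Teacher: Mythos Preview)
The paper does not contain a proof of this theorem: it is stated as a cited result from Rastogi et al.~\cite{Rastogi23}, with no proof given in the present text. Consequently there is no ``paper's own proof'' against which to compare your proposal.

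That said, your sketch is broadly in line with the methodology the paper describes around this result: the reduction to the transformed problem with operator $AL^{-1}$ and covariance $L^{-1}\tp L^{-1}$ (Section~\ref{Sec:Hilbert.Scales} and the discussion preceding Definition~\ref{def:reg-scales}), the use of the link condition to trade $L^{-1}$ for $(L^{-1}\tp L^{-1})^a$ and extract the $\la^a$ prefactor, the introduction of a benchmark element $\bar f = L^{-p}u$ to bring in the distance function $d_p(R)$, the qualification condition $q\ge a(p-1)$ for the bias, and the concentration/effective-dimension machinery for the sample-error terms. One small inaccuracy: you write $\norm{L^{-1}(\fp-\bar f)}_{\Ho}$ for the term controlled by the distance function, but by Definition~\ref{Defi:App.source} it is $\norm{L(\fp-\bar f)}_{\Ho}$ that equals (or is approximated by) $d_p(R)$; this matters for how the $\la^a$ factor attaches. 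Also, your display equating $\norm{\fz-\fp}_{\Ho}$ with $\norm{L^{-1}(\uz-\up)}_{\Ho}$ presumes $\fp\in\cD(L)$, which you immediately retract; it would be cleaner to decompose directly as $\fz-\fp = (\fz-\bar f)+(\bar f-\fp)$ and only apply $L$ to the first piece. For the actual details one must consult \cite{Rastogi23}.
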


The distance function is bounded explicitly in~\eqref{dist.bd} under the source condition~\ref{Ass:source.HS}. Then, we get the explicit rate of convergence in terms of the sample size by balancing the error terms for the regularization parameter choice. These rates are presented in the last two rows of Table~\ref{comparision.1} under the different condition on the benchmark smoothness~$p$. Note that if $\fp \in \cD(L)$, we may write  $\fp=L^{-1}u$ for some $u\in\HH$. Thus, if $s< 1$, we are in the oversmoothing case while for $s \geq 1$, we are in the regular case.

\vspace{0.2cm}

Now, we summarize the results in Table~\ref{comparision.1}. We present a comprehensive picture of the convergence rates with the corresponding parameter choices and the conditions in both regular and oversmoothing scenarios. The table shows convergence rates~$a_m$ described in~\eqref{err.bound}, corresponding orders of the regularization parameter choice $\lam_m$, smoothness of the true solution $s$, and additional constraints on the benchmark smoothness~$p$. Here, $n$ is some natural number which reflects in the multiplicative constant in the error bounds. The first row represents the oversmoothing case, and the last two rows represent the regular case. In the regular case ($p>1$), the convergence rates depend on the benchmark smoothness $p$. 

In the regular case, the authors~\cite{Rastogi23} showed that the convergence rates are optimal by relating the assumptions considered in standard regularization and regularization in Hilbert scales. The optimal rates are achieved in the last row of Table~\ref{comparision.1}, provided the benchmark smoothness $p$ is set appropriately, i.e., $s\leq p\leq s+\frac{b+1}{2a}$. In contrast, the rates are sub-optimal in the oversmoothing case.

\begin{table}
  \centering {\renewcommand{\arraystretch}{2}
  \caption{Convergence rates of the regularized solution~$\fz$ for~$a< \frac{1}{2}$,~$a (p-1)\leq {q}$.}\label{comparision.1}
    \begin{tabular}{|c|l|l|l|l|}
      \hline
      \multirow{1.5}{4em}{\small{Case}}  &\small{Convergence rates}  & \small{Reg. param.} &  \small{True} & 
                                                                                    \multirow{1.5}{4em}{\small{Conditions}} \\  [-10pt]
                                         & \small{ $a_m$}   &~$\la_m$ &  
                                                    \small{Smooth.}  &  \\
      \hline
      \small{{\bf Oversmooth.}}  & $\Ro\paren{\frac{\Sigma}{\Ro\sqrt{m}}}^{\frac{2as}{b+1}}$ & $\paren{\frac{\Sigma}{\Ro\sqrt{m}}}^{\frac{2}{b+1}}$ & $s< 1$ &
                                                                                                                                                 ~$a\in[ \frac{1}{n+1}, \frac{1}{2}),~n\in\NN$ 
      \\
      \hline
      \multirow{2}{4em}{\small{{\bf Regular}}} &~$\Ro\paren{\frac{\Sigma}{\Ro\sqrt{m}}}^{\frac{s}{p-1}}$ &
                                                                                    ~$\paren{\frac{\Sigma}{\Ro\sqrt{m}}}^{\frac{1}{a(p-1)}}$ & \multirow{2}{4em}{$s\geq 1$}  & 
                                                                                                                                                                                                    ~$p\geq s+\frac{b+1}{2a}$ \\ 
                                         &~$\Ro\paren{\frac{\Sigma}{\Ro\sqrt{m}}}^{\frac{2as}{2as+b+1-2a}}$ &~$\paren{\frac{\Sigma}{\Ro\sqrt{m}}}^{\frac{2}{2as+b+1-2a}}$ &  
                                                                                                                          &~$s
                                                                                                                            \leq
                                                                                                                            p\leq
                                                                                                                            s
                                                                                                                            +\frac{b+1}{2a}$\\
      \hline
    \end{tabular}
  }
\end{table}

\vspace{0.2cm}
\begin{remark}
The regularization schemes have a saturation effect, meaning that the rates cannot improved corresponding to the smoothness of the solution beyond the qualification of the regularization scheme. Here, in the regular case, the optimal rates are established for the range $as \leq q$, provided that the scheme has qualification $q$. For standard
regularization schemes, this would correspond to the range $\frac{as}{1-2a}\leq q$, only. 
\end{remark}


\subsection{Regularization by projection}
\label{Sec:projection.method}

Regularization by projection is a classical method that has been studied widely~\cite{Engl96}. In statistical inverse learning, it was studied recently in \cite{Helin22}. Notice that in \cite{Helin22} an additive Gaussian noise model is assumed. Therefore, in this section, $\epsilon_i$ is zero-centered Gaussian with variance $\Gamma$. Here, we modify this recent result to align with the Hilbert scale structure discussed above.
To set the stage, we assume that there is a sequence of nested subspaces of $\HH$ that provide a consistent approximation scheme.
\begin{definition}\label{admissible.space}
Consider a series of finite-dimensional subspaces of $\HH$ denoted as $V_n$, $n\geq 1$ with the condition $\text{dim} V_n=n$. Additionally, let $P_n:\Ho\to V_n\subset\Ho$ represent an orthogonal projection. The sequence $\brac{V_n}_{n=1}^\infty$ is termed `admissible subspaces' if it meets the criteria that  $V_n\subset V_{n+1}$ for all $n\in\NN$ and  $\cup_{n=1}^\infty V_n=\HH$.
\end{definition}

Given training data with $m$ samples, we aim to find an approximation space $V_n$ and an estimator $f_{n,m} \in V_n$ that concentrates to the true solution $\hat f$ with a proper parameter choice rule $n = n(m)$. Let us define the maximum likelihood (ML) estimator within $V_n$ as follows:
\begin{equation}\label{emp.err}
    f_{n,m}=\argmin\limits_{f\in V_n} \norm{\sx [A (f)]-\yy}_m^2 \quad \text{a.s.}
\end{equation}
In order to ensure that the estimator $f_{n,m}$ is almost surely well-defined, i.e. the minimization problem has a unique solution, we need to further assume that the design measure is assumed to be atomless, $A$ is injective and the kernel $k$ is strictly positive-definite. For what follows, recall that $B^+$ refers to the pseudoinverse of an operator $B:\Ho \to \Ho$.

Here, we assume that the source set is specified by the approximation scheme $\{V_n\}_{n=1}^\infty$, namely, we set
\begin{equation}\label{theta}
    \Theta(s,\Ro)=\brac{f\in \HH:\norm{(I-P_n)f}_\HH\leq \Ro(n+1)^{-s} \quad \forall~~n\geq 0}\subset\HH.
\end{equation}
where $s,\Ro > 0$ and $V_n$, $n\geq 1$ are admissible subspaces defined in~\ref{admissible.space}.
With this groundwork, the probabilistic reconstruction error is defined through the subsequent theorem:

\begin{theorem}\label{thm:Proj1}
Suppose $L$ specifies a Hilbert scale according to definition \ref{def:hilbertscale}
such that the spectrum of $L$ satisfies $\sigma_j(L^{-1}) \geq D_1 j^{-\frac{t}{\gamma}}$, $t,\gamma>0$, and $V_n \subset \DD(L)$ for any $n\geq 0$.
We assume that $\nu$ satisfies Assumption \ref{Ass:link} with $p=1$ and $a<\frac 12$ such that $\gamma = 1/a-2$ and suppose $\tp$ is a Hilbert-Schmidt operator satisfying \eqref{eq:tpLgamma_cond} with ${\mathcal R}(\tp) \subset \DD(L^{\gamma})$.
Given a sequence of admissible subspaces $\brac{V_n}_{n=1}^\infty$, we assume that for some $D_2>0$ we have
\begin{equation}\label{eq:tpLgamma_cond}
    \max\left\{\sup_{n\in {\mathbb N}} \norm{(P_n \tp P_n)^+ P_n L^{-\gamma} P_n}, \norm{L^{\gamma} \tp}\right\} < D_2
\end{equation}
and
\begin{equation}
        \label{eq:projection_crossprod_assumption}
        \sup_{n\in {\mathbb N}}\norm{(P_n L^{-\gamma} P_n)^+ L^{-\gamma}} \leq D_2
    \end{equation}
If $\fp\in\Theta(s,R)$, there exists a constant $C$ depending on $D_j,~j = 1, 2$ such that for $m\geq n$ the ML estimator $f_{n,m}$ defined by identity~\eqref{emp.err} satisfies the following bound with probability greater than $1-\eta$ for $0<\eta<1$,
\begin{equation}
    \label{eq:projection_prob_concentration}
    \norm{f_{n,m}-\fp}_{\Ho}\leq C\sbrac{\Ro n^{-s}+\log\paren{\frac{8}{\eta}}\delta\paren{\frac{n^t}{m}+\sqrt{\frac{n^{t+1}}{m}}}}    
\end{equation}
provided that
$$\log\paren{\frac{8}{\eta}}\leq \frac{\sqrt{m}}{12}n^{-t}.$$
\end{theorem}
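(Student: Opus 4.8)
The plan is to follow the classical bias--variance decomposition for regularization by projection, but carried out in the Hilbert scale induced by $L$, exploiting the Link condition to translate smoothness of $\tp$ into the spectral scale. Write $\fp = P_n\fp + (I-P_n)\fp$. The term $\norm{(I-P_n)\fp}_\HH$ is controlled directly by the source set assumption $\fp\in\Theta(s,\Ro)$, yielding the $\Ro n^{-s}$ contribution in \eqref{eq:projection_prob_concentration}; this is the (deterministic) approximation/bias term and requires no probabilistic input. The substance of the proof is to bound $\norm{f_{n,m}-P_n\fp}_\HH$, where $f_{n,m}$ solves the least-squares problem \eqref{emp.err} on $V_n$. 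The first step would be to write $f_{n,m}$ in closed form: since $f_{n,m}\in V_n$ minimizes $\norm{\sx A f - \yy}_m^2$, one has the normal equation $P_n \tx P_n f_{n,m} = P_n \bx^*\yy$ (restricted to $V_n$), so $f_{n,m} = (P_n\tx P_n)^+ P_n\bx^*\yy$ on $V_n$, which is a.s.\ well-defined by the atomlessness/injectivity/strict positive-definiteness hypotheses.

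Next I would insert $\yy = \sx A\fp + \boldsymbol\varepsilon$ and split the estimation error into a \emph{sampling error} term (from replacing $\tp,\bp^*\gp$ by $\tx,\bx^*(\sx A\fp)$) and a \emph{noise} term (from $\boldsymbol\varepsilon$). For the noise term, $(P_n\tx P_n)^+ P_n\bx^*\boldsymbol\varepsilon$ is a Gaussian vector in $V_n$ with variance $\Gamma$; I would compute its expected squared $\HH$-norm as $\frac{\Gamma}{m}\tr\big((P_n\tx P_n)^+ P_n\tx P_n\big)$-type quantity, use \eqref{eq:tpLgamma_cond} and \eqref{eq:projection_crossprod_assumption} together with the Link condition (with $p=1$, $a<1/2$, $\gamma = 1/a - 2$) and the spectral lower bound $\sigma_j(L^{-1})\geq D_1 j^{-t/\gamma}$ to see that this trace is of order $n^{t}$ on average; a Gaussian concentration (or Hanson--Wright) inequality then upgrades this to the high-probability bound $\delta\big(n^t/m + \sqrt{n^{t+1}/m}\big)$ with the stated logarithmic factor. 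For the sampling error I would use the operator perturbation estimates $\norm{T_\nu - \tx}$ and $\norm{B_\nu^*\gp - \bx^*\yy}$ of the Proposition from \cite{Bauer07}, restricted and conjugated by $P_n$, combined with the invertibility of $P_n\tp P_n$ on $V_n$ (guaranteed through \eqref{eq:tpLgamma_cond} applied with $L^{-\gamma}$) via a Neumann-series argument; the condition $\log(8/\eta)\leq \tfrac{\sqrt m}{12}n^{-t}$ is exactly what is needed to keep the perturbed operator invertible with high probability, i.e.\ $\norm{(P_n\tp P_n)^{-1/2}(\tp-\tx)(P_n\tp P_n)^{-1/2}}\leq \tfrac12$ on $V_n$.

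The main obstacle, and the step that deserves the most care, is the control of the conditioned empirical covariance on $V_n$: one must show that $P_n\tx P_n$ is invertible on $V_n$ and that $(P_n\tx P_n)^+$ is comparable to $(P_n\tp P_n)^+$ with high probability, \emph{uniformly in the relevant range of $n$}, since the effective condition number $\norm{(P_n\tp P_n)^+}\lesssim n^{t}$ grows with $n$. This is where hypotheses \eqref{eq:tpLgamma_cond}--\eqref{eq:projection_crossprod_assumption} and the link-condition calibration $\gamma = 1/a-2$ enter decisively: they quantify exactly how the projections interact with the smoothing scale $L^{-\gamma}$ so that the whitened perturbation stays bounded. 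Once the conditioned operator is under control, assembling the three pieces --- the $\Ro n^{-s}$ bias, the $O(n^t/m)$-type lower-order sampling error, and the dominant noise term $\delta(n^t/m + \sqrt{n^{t+1}/m})$ --- and collecting the logarithmic factors from each concentration step gives \eqref{eq:projection_prob_concentration}. I would expect the bookkeeping of constants (all absorbed into $C$ depending on $D_1,D_2$) and the precise exponent arithmetic relating $t,\gamma,a,b$ to be routine but tedious, so I would state the intermediate bounds cleanly and defer the arithmetic.
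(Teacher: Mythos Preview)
Your plan is a from-scratch bias--variance argument, whereas the paper's proof is a \emph{reduction}: it quotes \cite[Thm.~1.2]{Helin22}, which already gives exactly the bound \eqref{eq:projection_prob_concentration} once one knows (a) $\sigma_{\min}(P_n\tp P_n)\geq C n^{-t}$ and (b) $\sup_n\norm{(P_n\tp P_n)^+\tp(I-P_n)}\leq C$. The entire proof in the paper consists of deriving (a) and (b) from the Hilbert-scale hypotheses; the concentration analysis you sketch (Neumann series for the empirical covariance, Gaussian/Hanson--Wright for the noise, perturbation bounds) is precisely what \cite{Helin22} already packages. So your route is not wrong, but it re-proves the cited theorem and obscures that the only new content here is the translation from the link condition to conditions (a)--(b).

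More importantly, that translation is the step your outline leaves vague. You say you would ``use \eqref{eq:tpLgamma_cond}--\eqref{eq:projection_crossprod_assumption} together with the Link condition and the spectral lower bound $\sigma_j(L^{-1})\geq D_1 j^{-t/\gamma}$'' to get the conditioning of $P_n\tp P_n$, but you do not say how. The paper's mechanism is: apply the operator-concave map $t\mapsto t^{1/(2a)}$ to the link inequality \eqref{eq:link_condition} (with $p=1$) to obtain $L^{-1}\tp L^{-1}\succeq L^{-1/a}$, substitute $u=LP_n v$ to get $P_n\tp P_n\succeq P_n L^{-\gamma}P_n$ on $V_n$ (using $\gamma=1/a-2$), and then invoke Weyl monotonicity together with $\sigma_j(L^{-1})\geq D_1 j^{-t/\gamma}$ to conclude $\sigma_{\min}(P_n\tp P_n)\geq D_1 n^{-t}$. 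For (b) the paper factorizes
\[
(P_n\tp P_n)^+\tp(I-P_n)=\big[(P_n\tp P_n)^+P_nL^{-\gamma}P_n\big]\big[(P_nL^{-\gamma}P_n)^+L^{-\gamma}\big]\big[L^{\gamma}\tp\big](I-P_n),
\]
and each bracket is bounded by $D_2$ via \eqref{eq:tpLgamma_cond}--\eqref{eq:projection_crossprod_assumption}. These two short arguments are the actual substance of the theorem; once you have them, either apply \cite{Helin22} directly (as the paper does) or run your direct decomposition---but in the latter case note that the ``sampling error'' term you describe is exactly $(P_n\tx P_n)^+P_n\tx(I-P_n)\fp$, so you will still need the population bound (b) plus a perturbation argument, i.e.\ no work is saved.
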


\begin{proof}
Below, we describe the result \cite[Thm. 1.2]{Helin22} and prove that conditions therein are satisfied by the our assumptions. In the following, we employ the convention $P_0 = 0$. In \cite{Helin22} the author considered following sub-classes of design measures:
$${\mathcal P}_{\text{proj}}^{>}(t,C)=\brac{\nu\in \PP(X):\sigma_{\min}(P_n\tp P_n)\geq Cn^{-t}\; \text{for all}\; n\in \NN}$$
and
$${\mathcal P}_{\text{proj}}^{\times}(C)=\brac{\nu\in \PP(X):\norm{(P_n\tp P_n)^{+}\tp(I-P_n)}_{\Ho}\leq C \; \text{for all}\;n\in \NN}$$
where $\sigma_{\min}(P_n\tp P_n)$ refers to the smallest eigenvalue of the finite-dimensional operator $P_n\tp P_n : V_n \to V_n$. Now \cite[Thm. 1.2]{Helin22} states that if $f^\dagger \in \Theta(s,R)$ and $\nu \in {\mathcal P}_{\text{proj}}^{>}(t,C_1) \cap {\mathcal P}_{\text{proj}}^{\times}(C_2)$, then inequality \eqref{eq:projection_prob_concentration} holds with probability
$$\log\paren{\frac{8}{\eta}}\leq \frac{\sqrt{m}}{12}\sigma_{\min}(P_n\tp P_n)$$ for a constant $C>0$ depending on $C_1$ and $C_2$.

Let us now deduce that our assumptions imply $\nu \in {\mathcal P}_{\text{proj}}^{>}(t,C) \cap {\mathcal P}_{\text{proj}}^{\times}(C)$ for some $C>0$ depending on $D_1$ and $D_2$.
Applying the operator concave function~$t\mapsto t^{1/2a}$ to inequality \eqref{eq:link_condition} in assumption \ref{Ass:link} respects the partial ordering, and we obtain that
\begin{equation}
\label{eq:Lnu1}
\scalar{L^{-1} \tp L^{-1} u}{u}{\HH} \geq \scalar{L^{-1/a}u}{u}{\HH}.
\end{equation}
Suppose~$u:= L P_n v\in\HH$ and notice that by assumption $P_nv\in\DD(L)$. Then we observe that
\begin{equation*}
\scalar{\tp P_n v}{P_n v}{\HH} \geq \scalar{L^{-\gamma} P_n v}{P_n v}{\HH}.
\end{equation*}
Since $\gamma>0$, the operator~$L^{-\gamma}$ is bounded, and by the Weyl Monotonicity Theorem~\cite[Cor.~III.2.3]{Bhatia97} we see that
\begin{equation}\label{eq.sing1}
\sigma_n\paren{P_n\tp P_n}\geq \sigma_n\paren{P_n L^{-\gamma} P_n}.
\end{equation}
In particular, by our assumption on the spectral decay of $L^{-1}$ we have that
\begin{equation}
    \sigma_{\min}\paren{P_n\tp P_n} \geq \sigma_{\min} \paren{P_n L^{-\gamma} P_n} \geq D_1 n^{-t}.
\end{equation}

Next, we observe that
\begin{eqnarray}
    \norm{(P_n \tp P_n)^+ \tp (I-P_n)}_{\Ho}
    & = & \norm{(P_n \tp P_n)^+ P_n L^{-\gamma} P_n (P_n L^{-\gamma} P_n)^+ L^{-\gamma} L^{\gamma} \tp (I-P_n) }_{\Ho} \nonumber \\
    & \leq & \norm{(P_n \tp P_n)^+ P_n L^{-\gamma}P_n}_{\Ho} \norm{(P_n L^{-\gamma} P_n)^+ L^{-\gamma}}_{\Ho} \norm{L^{\gamma} \tp}_{\Ho}.\label{eq:proj_aux_upper1}
\end{eqnarray}
Each of the three terms on the right-hand side of \eqref{eq:proj_aux_upper1} is bounded due to \eqref{eq:tpLgamma_cond} and \eqref{eq:projection_crossprod_assumption}. In consequence, we obtain the result applying \cite[Thm. 1.2]{Helin22}.
\end{proof}

We introduce a non-linear truncated estimator $T_{\widehat R}:\Ho\to\Ho$ such that
\[
    T_{\widehat R}(f)= 
\begin{cases}
    f,& \text{if } \norm{f}_{\Ho}\leq {\widehat R}\\
    0,              & \text{otherwise}
\end{cases}
\]
where a fixed non-negative value ${\widehat R} \geq 0$ is involved. We formulate a non-linear estimator $g_{n,m}^{\widehat R}$ as follows:
\begin{equation}\label{trunc.est}
    g_{n,m}^{\widehat R}=T_{\widehat R}(f_{n,m})
\end{equation}
where the selection of $\widehat R$ depends upon the parameters outlined below. Now, we establish the upper convergence rate in $L_p$  for the estimator $g_{n,m}^{\widehat R}$.

\begin{theorem}\label{Thm:ML2}
Suppose $L$ specifies a Hilbert scale according to definition \ref{def:hilbertscale}
such that the spectrum of $L$ satisfies $\sigma_j(L^{-1}) \geq D_1 j^{-\frac{t}{\gamma}}$, $t,\gamma>0$, and $V_n \subset \DD(L)$ for any $n\geq 0$.
Let ${\mathcal P}'$ be the set of probability measures $\nu$ that 
\begin{enumerate}
    \item satisfy Assumption \ref{Ass:link} with $p=1$ and $a<\frac 12$ with $\gamma = 1/a-2$, 
    \item for which $\tp$ is a Hilbert-Schmidt operator satisfying \eqref{eq:tpLgamma_cond} and \eqref{eq:projection_crossprod_assumption} with some $D_2>0$, and
    \item ${\mathcal R}(\tp) \subset \DD(L^{-\gamma})$.
\end{enumerate}
 Let $2s - t + 1 > 0$ and set $g_{n,m}^{\widehat R}$ as the ML estimator defined by identity~\eqref{trunc.est}. For the sequence of solutions corresponding to the parameter choice 
\begin{equation}\label{sam.size}
    n=\paren{\frac{\Sigma}{\Ro\sqrt{m}}}^{-\frac{2}{2s+t+1}}
\end{equation}
and $\widehat R= \widehat R(n,\Sigma)$  given by 
\begin{equation}\label{R.para}
   \widehat R=C\paren{\frac{\Sigma}{\sigma_{\min}(P_n\tp P_n)}+1} 
\end{equation}
with suitably large constant $C$ depending on $D_2$ and $R$, 
the sequence 
\begin{equation*}
    a_{m, R,\Sigma}=\Ro\paren{\frac{\Sigma}{\Ro\sqrt{m}}}^{\frac{2s}{2s+t+1}}
\end{equation*}
is an upper rate of convergence in $L^p$ for all $p>0$ for the reconstruction error over the family of $\mathcal{M}(\Theta(s,\Ro), \cP')$.
\end{theorem}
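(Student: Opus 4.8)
The plan is to leverage Theorem~\ref{thm:Proj1}, which already provides a high-probability bound for the untruncated projection estimator $f_{n,m}$ under exactly the hypotheses assumed here, and to upgrade it to an $L^p$ statement by combining it with the deterministic control that the truncation $T_{\widehat R}$ provides, followed by the optimization in $n$. First I would observe that the three conditions imposed on $\nu$ in the statement --- the link condition with $p=1$, $a<1/2$ and $\gamma=1/a-2$; the Hilbert--Schmidt property together with \eqref{eq:tpLgamma_cond} and \eqref{eq:projection_crossprod_assumption}; and ${\mathcal R}(\tp)\subset\DD(L^{-\gamma})$ --- coincide with the assumptions of Theorem~\ref{thm:Proj1}. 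Hence, for every $\eta\in(0,1)$ and every $m\geq n$ with $\log(8/\eta)\leq\tfrac{\sqrt m}{12}n^{-t}$, the bound \eqref{eq:projection_prob_concentration} holds for $f_{n,m}$ with probability at least $1-\eta$. This reduction mirrors the one carried out in the proof of Theorem~\ref{thm:Proj1}.

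Next I would control the truncation. Since $\fp\in\Theta(s,\Ro)$ and $P_0=0$, taking $n=0$ in~\eqref{theta} gives $\norm{\fp}_{\Ho}\leq\Ro$; together with $\norm{g_{n,m}^{\widehat R}}_{\Ho}\leq\widehat R$ by construction this yields the deterministic bound $\norm{g_{n,m}^{\widehat R}-\fp}_{\Ho}\leq\widehat R+\Ro$. On the event where \eqref{eq:projection_prob_concentration} holds, the bias part of $\norm{f_{n,m}}_{\Ho}$ is of order $\Ro$, while, using $\log(8/\eta)\leq\tfrac{\sqrt m}{12}n^{-t}$ and $\sigma_{\min}(P_n\tp P_n)\geq D_1 n^{-t}$, the stochastic part is of order $\Sigma/\sigma_{\min}(P_n\tp P_n)+\Sigma$; hence the choice~\eqref{R.para} with $C$ large enough (depending on $D_2$ and $\Ro$) forces $\norm{f_{n,m}}_{\Ho}\leq\widehat R$ there, so that $g_{n,m}^{\widehat R}=f_{n,m}$ on that event and \eqref{eq:projection_prob_concentration} carries over verbatim to the truncated estimator.

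I would then pass from the tail bound to the $L^p$ bound. Writing $b_{n,m}:=n^t/m+\sqrt{n^{t+1}/m}$, the previous step gives, for $u$ below a threshold $U_{n,m}\asymp\Ro n^{-s}+\tfrac{\sqrt m}{12}n^{-t}\,\Sigma\,b_{n,m}$, an exponential tail $\rho^m(\norm{g_{n,m}^{\widehat R}-\fp}_{\Ho}>u)\leq 8\exp(-(u-C\Ro n^{-s})_+/(C\Sigma b_{n,m}))$, while for $u>U_{n,m}$ one uses $\rho^m(\,\cdot\,)\leq 8\exp(-\tfrac{\sqrt m}{12}n^{-t})$, and for $u>\widehat R+\Ro$ the probability vanishes. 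Substituting into $\EE[\norm{g_{n,m}^{\widehat R}-\fp}_{\Ho}^p]=\int_0^\infty p u^{p-1}\rho^m(\norm{g_{n,m}^{\widehat R}-\fp}_{\Ho}>u)\,du$ and evaluating the resulting elementary integrals yields $\EE[\norm{g_{n,m}^{\widehat R}-\fp}_{\Ho}^p]^{1/p}\leq C_p(\Ro n^{-s}+\Sigma\,b_{n,m})+C_p(\widehat R+\Ro)\exp(-\tfrac{\sqrt m}{12p}n^{-t})$. With the choice $n=(\Sigma/(\Ro\sqrt m))^{-2/(2s+t+1)}$ one checks directly that $\Ro n^{-s}$ and $\Sigma\sqrt{n^{t+1}/m}$ are both of order $a_{m,\Ro,\Sigma}=\Ro(\Sigma/(\Ro\sqrt m))^{2s/(2s+t+1)}$, that $\Sigma n^t/m$ is of strictly lower order (its ratio to $\Sigma\sqrt{n^{t+1}/m}$ is $n^{(t-1)/2}m^{-1/2}\to0$), and that --- since $2s-t+1>0$ forces $\sqrt m\,n^{-t}\asymp m^{(2s-t+1)/(2(2s+t+1))}\to\infty$ while $\widehat R\lesssim\Sigma n^t$ grows only polynomially --- the exponential remainder decays faster than any power of $m$ and is negligible against $a_{m,\Ro,\Sigma}$. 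Taking the supremum over $\rho\in\mathcal M(\Theta(s,\Ro),\cP')$ and the $\limsup$ in $m$ then gives the claimed upper rate.

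The hard part will be Step~2: the truncation level $\widehat R$ must be large enough that $g_{n,m}^{\widehat R}=f_{n,m}$ on the high-probability event (so that truncation introduces no bias on the bulk of the sample space), yet controlled enough that the crude bound $\widehat R+\Ro$, multiplied by the exponentially small but nonzero probability of the complementary event, does not spoil the polynomial rate. The specific form~\eqref{R.para}, matching the stochastic scale $\Sigma/\sigma_{\min}(P_n\tp P_n)$ of $f_{n,m}$, together with the hypothesis $2s-t+1>0$ which guarantees $\sqrt m\,n^{-t}\to\infty$, is exactly what makes this balance work; verifying it carefully (rather than the routine integration and exponent bookkeeping of Step~3) is where the real content lies.
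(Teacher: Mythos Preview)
Your proposal is correct and follows the standard route. The paper itself does not include an explicit proof of this theorem; it states the result immediately after Theorem~\ref{thm:Proj1}, relying implicitly on \cite{Helin22} for the passage from the high-probability bound to the $L^p$ rate via the truncated estimator (the proof of Theorem~\ref{thm:Proj1} already establishes that the Hilbert-scale hypotheses here imply the conditions of \cite[Thm.~1.2]{Helin22}). Your reconstruction---invoking Theorem~\ref{thm:Proj1} for the tail bound, using the truncation so that $g_{n,m}^{\widehat R}=f_{n,m}$ on the high-probability event while $\norm{g_{n,m}^{\widehat R}-\fp}_{\Ho}\leq \widehat R+\Ro$ deterministically, integrating the tail, and then balancing $\Ro n^{-s}$ against $\Sigma\sqrt{n^{t+1}/m}$---is exactly the argument one expects, and your exponent bookkeeping (in particular that $2s-t+1>0$ forces $\sqrt m\,n^{-t}\to\infty$ so that the exponential remainder $(\widehat R+\Ro)\exp(-c\sqrt m\,n^{-t})$ is negligible against the polynomial rate) is correct.
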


Let us also note that the paper \cite{Helin22} includes a proof of minimax optimality regarding the above rates obtained using the class of models specified by ${\mathcal P}_{\text{proj}}^{>}$, ${\mathcal P}_{\text{proj}}^\times$ and a set ${\mathcal P}_{\text{proj}}^{<}$ corresponding to an upper bound to the spectral decay. However, extending the proof of lower bounds to the Hilbert scale setting is beyond the scope of this paper.


\subsection{Convex regularization}\label{Sec:Convex.reg}

Convex variational regularization techniques are of interest in various inverse problems, where certain features of the unknown, such as edges in imaging or sparsity in signal processing, needs to be emphasized.
In this section, we deviate briefly from the earlier setup and assume that the forward linear operator $A$ maps a separable Banach space ${\mathcal B}$ to a RKHS $\cH'$.

Below, we discuss the characteristics of regulated solutions  $\fz$ obtained through the variational problem
\begin{equation}
    \label{eq:convex_optimization}
    \fz=\argmin\limits_{f\in\HH} \brac{\frac{1}{2m}\sum\limits_{i=1}^m\paren{A(f)(x_i)-y_i}^2+\lambda G(f)}
\end{equation}

Here, $G:{\mathcal B} \to \RR\cup\brac{\infty}$ represents a convex functional. Let us point out that penalties such as 
\begin{equation}
    \label{eq:convex_penalty}
    G(f) = \frac 1p \norm{f}_{{\mathcal B}}^p    
\end{equation}
have been studied in statistical learning (see e.g. \cite{mendelson2010, steinwart2009optimal}).
However, in the inverse problems context much less is known and to the best of our knowledge their study with random design is so far limited to \cite{Bubba21, benning2023trust}.

The first article \cite{Bubba21} leverages ideas from deterministic convex regularization theory, where Bregman distances \cite{benning2018modern} have become popular during last decade. While those techniques extend to more general penalties, here, we restrict to the $p$-homogeneous functional \eqref{eq:convex_penalty}. Due to strict convexity for $p\geq 1$, the regularized solution $\fz$ is unique. In addition, the mappings
\begin{equation}
    (f, {\bf z}) \mapsto \frac{1}{2m}\sum\limits_{i=1}^m\paren{A(f)(x_i)-y_i}^2+\lambda G(f) \quad \text{and}\quad \bz \mapsto \fz
\end{equation}
are continuous and measurable, respectively (see comment in \cite[Sec. 4]{Bubba21}). Therefore, the learning problem in \eqref{eq:convex_optimization} becomes well-posed. Notice that \cite{Bubba21} assumes more general range (Banach space) of operator $A$ with similar continuous pointwise evaluation functionals. As we limit to RKHS-valued range, this simplifies the assumptions needed in \cite{Bubba21}.

The optimality criterion associated with \eqref{eq:convex_optimization} is given by
\begin{equation}
    \bx^*(\bx \fz - \yy) + \lambda r_{{\bf z},\lambda} = 0
\end{equation}
for $r_{{\bf z},\lambda} \in \partial G(\fz)$, where the dual operator $\bx^*$ maps from the RKHS $\cH'$ to the Banach dual ${\mathcal B}^*$ and $\partial G$ denotes the subdifferential set
\begin{equation*}
    \partial G(f) = \{r \in {\mathcal B}^* \; | \; G(f) - G(\tilde f) \leq \langle r, f- \tilde f\rangle_{{\mathcal B}^*\times {\mathcal B}} \; \text{for all} \; \tilde f\in {\mathcal B}\}.
\end{equation*}
For $r_f \in \partial G(f)$ and $r_{\tilde f} \in \partial G(f)$ we define symmetric Bregman distance between $f$ and $\tilde f$ as
\begin{equation*}
    D_G^{r_f, r_{\tilde f}}(f,\tilde f) = \langle r_f - r_{\tilde f}, f - \tilde f\rangle_{{\mathcal B}^* \times {\mathcal B}}
\end{equation*}
Following \cite{Bubba21} we restrict to the case $1<p<2$. In consequence, the subdifferential elements $r_f\in \partial G(f)$ are unique and in what follows we drop the dependence of Bregman distance on the subgradients and simplify notation writing $D_G(f,\tilde f)$.

The crux of analysing the non-quadratic objective functional in \eqref{eq:convex_optimization} is that it does not give rise to similar explicit formula for the minimizer $\fz$ as e.g. in the Tikhonov case in \eqref{eq:gen.reg}. However, as is shown in \cite{Bubba21}, the Bregman-based reconstruction error can be separated into bias-variance type of decomposition. The bias is represented by the term
\begin{equation}
    {\mathcal R}(\beta, \bz; f^\dagger) =
    \inf_{{\bf w} \in \RR^m} \left[G^\ast(\Ro - B_{\bz}^* {\bf w}) + \frac \beta {2m} \norm{{\bf w}}^2\right],
\end{equation}
where $G^\ast$ is the convex conjugate of $G$.
The functional ${\mathcal R}$ is closely related to the distance function in  Definition \ref{Defi:App.source}. However, notice carefully that unlike the distance function, the functional ${\mathcal R}$ is dependent on the random sampling. Therefore, conditions for the source set $\Theta$ and set of design measures ${\mathcal P}'$ become intertwined without further assumptions on the structure of $G$.
Moreover, the error variance is represented by $G^\ast(B_{\bz}^* {\bf \epsilon})$. The general concentration result is given as follows:

\begin{theorem}[{\cite[Thm. 4.11]{Bubba21}}]
\label{thm:convex}
Let $G$ be defined in \eqref{eq:convex_penalty} and let ${\mathcal M}_\theta$, $\theta = (\Sigma, \Ro)$, consist of all data-generating distributions induced by conditions \begin{equation*}
    G(f^\dagger) \leq \Ro
\end{equation*}
and
\begin{equation*}
    \EE {\mathcal R}(\beta, \bz; f^\dagger) \leq D_1 \beta + D_2 m^{-Q}
\end{equation*}
for some $\Ro, Q, D_1, D_2>0$.
Moreover, let us assume that
\begin{equation*}
    \EE G^\ast(B_{\bz}^* {\bf \epsilon}) \leq D_3 m^{-\frac q2}
\end{equation*}
for some $D_3>0$, where $q$ is the H\"older conjugate of $1<p<2$.
Then it follows that for the parameter choice rule
\begin{equation}
\label{eq:param-convex1}
    \lambda = \left(\frac{D_3^{\frac 2q} \Ro^{-\frac{q+2}{q}}}{D_1}\right)^{\frac 13} \left(\frac{\Sigma}{\Ro \sqrt m}\right)^{\frac 23}
\end{equation}
we have
\begin{equation}
     \sup_{(\Sigma, \Ro) \in \RR^2}\limsup_{m\to\infty} \sup_{\rho \in {\mathcal M}} \frac{\EE D_G(\fz,f^\dagger)}{a_{m,\Ro, \Sigma, p}} \leq C,
\end{equation}
where the concentration rate is given by
\begin{equation}
a_{m,\Ro, \Sigma, p} = \left(\frac{\Sigma }{\Ro \sqrt m}\right)^{\frac 23}
\end{equation}
and the constant $C$ depends on $D_j$, $j=1,2,3$, and $p$.
\end{theorem}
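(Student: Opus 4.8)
The plan is to reproduce, in the present RKHS-valued setting, the Bregman-distance analysis of \cite{Bubba21}, of which this statement is \cite[Thm.~4.11]{Bubba21}: a deterministic error bound conditional on the sample, a passage to expectation using the three standing hypotheses, and an elementary scalar optimization of the regularization parameter. First I would derive the deterministic bound. Fix the data $\bz$ and the noise vector ${\bf \epsilon}$; since $\fz$ has no closed form, I start from the optimality condition $B_\bz^*(B_\bz\fz-\yy)+\lambda r_{\bz,\lambda}=0$ with $r_{\bz,\lambda}\in\partial G(\fz)$, from the minimization inequality in \eqref{eq:convex_optimization} tested against $\fp$, and from the model identity $B_\bz\fp-\yy=-{\bf \epsilon}$. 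Pairing the optimality condition with $\fz-\fp$ and subtracting $\lambda\langle r^\dagger,\fz-\fp\rangle$ for a subgradient $r^\dagger\in\partial G(\fp)$ gives an identity of the shape $\lambda D_G(\fz,\fp)+\|B_\bz(\fz-\fp)\|^2=\langle B_\bz^*{\bf \epsilon}-\lambda r^\dagger,\fz-\fp\rangle$, where $D_G$ is the symmetric Bregman distance as in the excerpt. Inserting an arbitrary $w\in\RR^m$ via $r^\dagger=B_\bz^*w+(r^\dagger-B_\bz^*w)$, the component along $B_\bz^*w$ recombines with ${\bf \epsilon}$ and is absorbed by $\|B_\bz(\fz-\fp)\|^2$, while the residual $\langle r^\dagger-B_\bz^*w,\fz-\fp\rangle$ is estimated by Fenchel--Young for the conjugate pair $(G,G^\ast)$ with a tunable scaling, using $p$-homogeneity of $G$ (hence $q$-homogeneity of $G^\ast$, with $q$ the H\"older conjugate of $p$) and a power-type convexity modulus of $G$ on the bounded ball into which the minimization inequality confines $\fz$ --- this is where $G(\fp)\le\Ro$ and the restriction $1<p<2$ enter. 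Taking the infimum over $w$ should yield, for a coupling $\beta=\beta(\lambda,m)$ fixed later,
\[
D_G(\fz,\fp)\;\le\;C\Big(\tfrac1\lambda\,{\mathcal R}(\beta,\bz;\fp)+\Phi\big(\lambda,\,G^\ast(B_\bz^*{\bf \epsilon})\big)\Big),
\]
with $\Phi$ an explicit elementary function carrying the variance contribution.

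Next I would take $\EE[\,\cdot\,]$ and plug in the three conditions defining $\cM_\theta$, namely $\EE{\mathcal R}(\beta,\bz;\fp)\le D_1\beta+D_2m^{-Q}$, $\EE G^\ast(B_\bz^*{\bf \epsilon})\le D_3m^{-q/2}$, and $G(\fp)\le\Ro$ (the last one to bound the residual quantities absorbed into $C$), together with Jensen's inequality where $\Phi$ is concave in its second argument. This gives
\[
\EE D_G(\fz,\fp)\;\le\;C\Big(\tfrac{D_1\beta+D_2m^{-Q}}{\lambda}+\Phi\big(\lambda,\,D_3m^{-q/2}\big)\Big),
\]
and all estimates are uniform over $\rho\in\cM_\theta$ since these inequalities are precisely what defines that class.

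Finally I would optimize. Choosing $\beta$ to balance inside ${\mathcal R}$ brings the bias down to order $\lambda$, up to the genuinely lower-order term $m^{-Q}/\lambda$; balancing the remaining leading bias against $\Phi(\lambda,D_3m^{-q/2})$ is then a one-variable minimization whose minimizer is exactly the rule \eqref{eq:param-convex1}, with common balanced value of order $(\Sigma/(\Ro\sqrt m))^{2/3}$. One checks that under this $\lambda$ the term $m^{-Q}/\lambda$ is $o\big((\Sigma/(\Ro\sqrt m))^{2/3}\big)$ as $m\to\infty$, hence invisible in the $\limsup$. Dividing by $a_{m,\Ro,\Sigma,p}=(\Sigma/(\Ro\sqrt m))^{2/3}$ and taking $\limsup_{m\to\infty}$, then $\sup_{\rho\in\cM}$, then $\sup_{(\Sigma,\Ro)\in\RR^2}$, yields the asserted bound with $C$ depending only on $D_1,D_2,D_3$ and $p$.

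The step I expect to be the main obstacle is the deterministic decomposition: with no explicit minimizer and a non-quadratic penalty, one must carefully interleave Fenchel--Young with scaling, the comparison of the symmetric Bregman distance with a power-type convexity modulus on a bounded ball, the a priori bound $G(\fp)\le\Ro$, and the absorption into $\|B_\bz(\fz-\fp)\|^2$, all while tracking the exact powers of $\lambda$ that end up in $\Phi$; once that is in hand, the probabilistic step and the scalar optimization are routine.
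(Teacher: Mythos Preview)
The paper does not contain its own proof of this theorem: the statement is quoted verbatim as \cite[Thm.~4.11]{Bubba21} and is presented as a result from the literature, with no argument given in the present text. Consequently there is nothing in the paper to compare your proposal against.

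That said, your outline follows the expected architecture of the original reference: a deterministic Bregman error bound obtained from the optimality condition and a Fenchel--Young splitting that produces the infimum defining ${\mathcal R}(\beta,\bz;\fp)$ as the bias surrogate and $G^\ast(B_\bz^*{\bf \epsilon})$ as the variance surrogate, then expectation using the three model-class hypotheses, and finally a scalar balance yielding the rule \eqref{eq:param-convex1} and the rate $(\Sigma/(\Ro\sqrt m))^{2/3}$. This is consistent with the discussion preceding the theorem in the present paper, which explicitly identifies ${\mathcal R}$ and $G^\ast(B_\bz^*{\bf \epsilon})$ as the two halves of a bias--variance decomposition. If you want to make the sketch into a proof, the place requiring care is exactly where you flag it: tracking the precise powers of $\lambda$, $\Ro$, and $m$ through the $p$-homogeneous/$q$-homogeneous duality so that the optimizer comes out with the stated dependence on $D_1$, $D_3$, $\Ro$, and $\Sigma$; for that you would need to consult \cite{Bubba21} directly.
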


For a more concrete result, let us consider wavelet decompositions in $\RR^d$. 
Let ${\mathcal B} = B_{pp}^s(\RR^d) := B_p^s(\RR^d)$ be a Besov space \cite{daubechies1992ten} and
\begin{equation}
\label{eq:besov_penalty}
    G(f) = \frac 1p \norm{f}_{B_p^s}^p := \frac 1p \sum_{\alpha=1}^\infty c_{\alpha, p, s,d} |\langle f, \psi_\alpha\rangle|^p
\end{equation}
for some $1<p<2$, where
\begin{equation}
    \label{eq:wavelet_coefs}
    c_{\alpha, p, s,d} = 2^{|\alpha|d\left(p\left(\frac sd +\frac 12\right)-1\right)}.
\end{equation}
Here, $\psi_\alpha : \RR^d \to \RR$, with $\alpha=1,...,\infty$, are suitably regular functions that form an orthonormal wavelet basis for $\mathscr{L}^2(\RR^d)$ with global indexing $\alpha$. The notation $|\alpha|$ is used to denote the scale of the wavelet basis associated with the index $\alpha$. Rephrasing \cite[Cor. 5.6]{Bubba21} in terms of the upper rate of convergence yields the following statement.

\begin{theorem}
\label{thm:besov}
Let $G$ be defined by \eqref{eq:convex_penalty} and let the model set ${\mathcal M}$ be induced by conditions
$G(f) \leq \Ro$ and
\begin{equation*}
    \partial G(f) = \{B^*_\nu w\} \; \text{for}\; \norm{w} \leq \Ro.
\end{equation*}
Moreover, assume
\begin{equation*}
    \sum_{\alpha=1}^{\infty} c_{\alpha, q, -s, d} \norm{A\psi_\alpha}_{\cH'}^q < \infty,
\end{equation*}
where coefficients $c_{\alpha, q, -s, d}$ are defined by \eqref{eq:wavelet_coefs}.
Then for parameter choice rule
\begin{equation}
\label{eq:parameter-convex}
    \lambda = \Ro^{\frac{2}{3p} - \frac 13} \left(\frac{\Sigma}{\Ro \sqrt m}\right)^{\frac 23}
\end{equation}
we have
\begin{equation}
    \sup_{(\Sigma, \Ro) \in \RR^2}\limsup_{m\to\infty} \sup_{\rho \in {\mathcal M}} \frac{\EE D_G(\fz,f^\dagger)}{a_{m,\Ro, \Sigma, p}} \leq C 
\end{equation}
with the concentration rate
\begin{equation}
a_{m,\Ro, \Sigma, p} = \Ro^{\frac{2+5p}{3p}}\left(\frac{\Sigma}{\Ro \sqrt m}\right)^{\frac 23}.
\end{equation}
\end{theorem}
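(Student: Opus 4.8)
The plan is to obtain Theorem~\ref{thm:besov} by specializing the abstract concentration bound of Theorem~\ref{thm:convex} (i.e.\ \cite[Thm.~4.11]{Bubba21}) to the $p$-homogeneous Besov penalty \eqref{eq:besov_penalty}. It suffices to verify the three structural hypotheses of that theorem --- the bound $G(\fp)\le\Ro$ (built into the model class here), the bias estimate $\EE\,{\mathcal R}(\beta,\bz;\fp)\le D_1\beta+D_2 m^{-Q}$, and the variance estimate $\EE\,G^\ast(B_\bz^\ast\eps)\le D_3 m^{-q/2}$ --- with the constants $D_1,D_2,D_3$ and the exponent $Q$ tracked as explicit powers of $\Ro$ and the noise level, and then to substitute these into the parameter choice rule \eqref{eq:param-convex1} and the rate of Theorem~\ref{thm:convex}, simplifying through $\tfrac{1}{p}+\tfrac{1}{q}=1$.

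First I would record the convex conjugate of the penalty. Since \eqref{eq:besov_penalty} decouples over wavelet coefficients into a sum of the scalar functions $t\mapsto\tfrac{1}{p}c_{\alpha,p,s,d}|t|^p$, its Legendre transform decouples as well, giving $G^\ast(\xi)=\tfrac{1}{q}\sum_\alpha c_{\alpha,p,s,d}^{-(q-1)}|\langle\xi,\psi_\alpha\rangle|^q$; a short check of exponents using $(p-1)(q-1)=1$ identifies $c_{\alpha,p,s,d}^{-(q-1)}$ with the dual weight $c_{\alpha,q,-s,d}$ of \eqref{eq:wavelet_coefs}, so that $G^\ast$ is, up to the factor $1/q$, the $q$-th power of the dual Besov norm $\|\cdot\|_{B_q^{-s}}$. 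Under this identification the standing hypothesis $\sum_\alpha c_{\alpha,q,-s,d}\|A\psi_\alpha\|_{\cH'}^q<\infty$ says precisely that the image sequence $(A\psi_\alpha)_\alpha$ lies in the weighted $\ell^q$-space underlying $G^\ast$, which is what powers the variance estimate: conditioning on the design, $\langle B_\bz^\ast\eps,\psi_\alpha\rangle=\tfrac{1}{m}\sum_i\eps_i(A\psi_\alpha)(x_i)$ is an average of independent centered variables, and combining a Rosenthal-type moment inequality (recall $q>2$) with the noise moments of Assumption~\ref{Ass:noise} and the reproducing-kernel bound $|(A\psi_\alpha)(x)|\le\kappa\|A\psi_\alpha\|_{\cH'}$ of Assumption~\ref{Ass:Kernel} gives $\EE\,|\langle B_\bz^\ast\eps,\psi_\alpha\rangle|^q\lesssim(\kappa\|A\psi_\alpha\|_{\cH'})^q m^{-q/2}$ up to the noise-level power, uniformly in the design; weighting by $c_{\alpha,q,-s,d}$ and summing over $\alpha$ then yields the desired bound with $D_3$ an explicit multiple of $\kappa^q\sum_\alpha c_{\alpha,q,-s,d}\|A\psi_\alpha\|_{\cH'}^q$.

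The step I expect to be the crux is the bias estimate for the random functional ${\mathcal R}(\beta,\bz;\fp)=\inf_{\mathbf w\in\RR^m}[\,G^\ast(r^\dagger-B_\bz^\ast\mathbf w)+\tfrac{\beta}{2m}\|\mathbf w\|^2\,]$, where the source condition $\partial G(\fp)=\{r^\dagger\}$, $r^\dagger=B_\nu^\ast w$ with $\|w\|\le\Ro$, comes in. The natural competitor is to take $\mathbf w$ to be $w$ sampled at the design points (rescaled to the $\mathscr{L}^2(X,\hat\nu)$ convention), so that $B_\bz^\ast\mathbf w$ is the empirical analogue of $r^\dagger$ and $r^\dagger-B_\bz^\ast\mathbf w$ is a discretization error; inserting this into $G^\ast$ and averaging reduces the first summand to a concentration-of-measure estimate for empirical means of the scalars $w(x)(A\psi_\alpha)(x)$, in the spirit of the estimates for $B_\nu^\ast\gp$ recalled in Section~\ref{Sec:discretization}, while the quadratic summand has expectation $\tfrac{\beta}{2}\|w\|_\nu^2\le\tfrac{\beta}{2}\Ro^2$. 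This delivers $\EE\,{\mathcal R}(\beta,\bz;\fp)\le D_1\beta+D_2 m^{-Q}$ with $D_1\asymp\Ro^2$ and some $Q>0$; the delicate points are controlling the discretization remainder uniformly over the infinitely many wavelet scales, so that the sum over $\alpha$ of the per-scale errors stays finite, and extracting a clean power of $\Ro$ in $D_1$, since that power is what ultimately fixes the $\Ro$-exponent of the rate.

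Finally, with $D_1\asymp\Ro^2$, an explicit $D_3$, and a suitable pair $(D_2,Q)$ in hand, substituting into \eqref{eq:param-convex1} and simplifying via $\tfrac{1}{p}+\tfrac{1}{q}=1$ collapses the regularization parameter to $\lambda=\Ro^{2/(3p)-1/3}(\Sigma/(\Ro\sqrt m))^{2/3}$, and feeding the same constants through the conclusion of Theorem~\ref{thm:convex} upgrades its bound to $\EE\,D_G(\fz,\fp)\lesssim\Ro^{(2+5p)/(3p)}(\Sigma/(\Ro\sqrt m))^{2/3}$ uniformly over the model class~${\mathcal M}$, which is precisely the claimed upper rate of convergence.
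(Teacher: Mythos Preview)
Your approach is correct and is precisely the route taken in the source: the paper does not give an independent proof of Theorem~\ref{thm:besov} but simply states that it is a rephrasing of \cite[Cor.~5.6]{Bubba21}, and that corollary is obtained in \cite{Bubba21} exactly by specializing the abstract bound \cite[Thm.~4.11]{Bubba21} (Theorem~\ref{thm:convex} here) to the $p$-homogeneous Besov penalty --- computing $G^\ast$ as the dual Besov norm, bounding the variance term $\EE\,G^\ast(B_\bz^\ast\eps)$ via the summability hypothesis on $(\|A\psi_\alpha\|_{\cH'})_\alpha$, and controlling the bias functional ${\mathcal R}$ through the source condition $\partial G(\fp)=\{B_\nu^\ast w\}$ with $\|w\|\le\Ro$. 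Your identification of the $\Ro$-power tracking in $D_1$ as the delicate point is apt; once $D_1\asymp\Ro^2$ is established, the stated $\lambda$ and rate follow from \eqref{eq:param-convex1} and the conclusion of Theorem~\ref{thm:convex} after simplification via $1/p+1/q=1$.
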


\begin{remark}
As mentioned above, convex penalties in inverse problems, such as Lasso regularization, are often motivated by sparsity-type assumption regarding the unknown. While
Theorems \ref{thm:convex} and \ref{thm:besov} provide first attempt to formalize theory of convex penalties in statistical inverse learning with random design, the highly interesting case of $p=1$ remains open and is not easily answered by the Bregman distance based techniques. Moreover, the Bregman distance for $1$-homogeneous functional (consider e.g. the $\ell^1$-norm) is typically weak as the subgradient set is not a singleton. We point out that variational source conditions have been successful in proving concentration rates in the norm for deterministic design in statistical inversion \cite{weidling2020optimal} and it would be tempting to use similar ideas for the random design problem.
\end{remark}


\subsection{Parameter choice rules and adaptivity}

We recall that while tuning the regularization parameter is essential for regularization to work well, an a-priori choice of the regularization parameter is in general not feasible
in statistical problems since the choice necessarily depends on unknown structural properties such as smoothness of $f^\dagger$ or eigenvalue decay, see e.g. equations \eqref{lin.para.choice}, \eqref{sam.size},  Table \ref{comparision.1} or equations \eqref{eq:param-convex1}, \eqref{eq:parameter-convex}. 
Parameter choice rules, also known as \emph{model selection} or \emph{hyperparameter selection}, 
are crucial in the context of inverse problems
and play a vital role in finding stable, 
reliable, and interpretable solutions, that depend on the data only and are free from additional unknown a-priori assumptions.

The aim is to construct estimators, i.e. to find a sequence of regularization parameters 
$(\lam_\bz)_m$, without knowledge of a-priori assumptions, but depending
on the data $\bz$, such that the sequence of regularized solutions $(f_{\bz,\lam_\bz })_m$ is minimax optimal. Such a sequence of estimators is called \emph{minimax optimal adaptive}.

\vspace{0.2cm}

We give a very brief overview about some classical model selection criteria used in inverse problems.

\vspace{0.3cm}

{\bf Unbiased risk estimation (URE).}
A very natural idea is to  estimate the unknown risk by a functional $U(\bz , \lam)$ of the observations, and then to minimize this estimator of the risk. Ideally, $U$ is an \emph{unbiased} estimator of the risk, meaning that 
\[ \cE(f_{\bz, \lam}) = \mbe[U(\bz , \lam)] \;, \quad \forall \; \; \lam > 0 \;. \]
The principle of URE suggests to minimize over $\lam \in \Lambda$, where $\Lambda$ is a well-chosen finite grid of possible values for the regularization parameter: 
\[ \lam_\bz := \argmin_{\lam \in \Lambda} U(\bz , \lam)  \;. \]
For inverse problems, this method was studied in \cite{cavalier2002oracle}, where exact oracle inequalities for the mean square risk were obtained.

\vspace{0.3cm}

{\bf Hold-out and (generalized) cross-validation (GCV).} 
Cross-validation is a classical approach to select hyperparameters that is based on \emph{data-splitting}. The idea behind GCV is to use the data to both train and validate the model, but with a focus on finding the optimal hyperparameters. The process involves iteratively fitting the model with different hyperparameter values and evaluating the model's performance using a specific criterion. The simplist version is the \emph{hold-out method}, where the available data are split into a training set $\bz^t$ of size $m_1$ and validation set $\bz^v$ of size $m_2$, with $m=m_1 + m_2$. The estimator $f_{\bz^t, \lam}$ is defined by means of the training data and is evaluated using the empirical risk with respect to the validation data: 
\[ \lam_{\bz} :=  \argmin_{\lam \in \Lambda} \frac{1}{m_2}\sum_{j=1}^{m_2} ( f_{\bz^t, \lam}(x_j^v) - y_j^v  )^2     \;. \]
This method works well in practice and \cite{caponnetto2010cross} show minimax optimal rates for the $\LL$ error under polynomial eigendecay and a H\"older source condition. 

\vspace{0.3cm}

{\bf Lepski's balancing principle.} 
The \emph{Lepski principle}, a.k.a. the \emph{Goldenshluger-Lepski method} was introduced in the seminal paper \cite{lepskii1991problem} and  does not use any data splitting. It is merely based on comparing estimators for different levels of regularization. 
Surprisingly, the monotonicity of the approximation error suffices to provide the knowledge to choose the regularization parameter adaptively, provided an empirical approximation of the sample error      $ \cS_\bz$ is available. More precisely, let $\Lambda$ be a finite grid of possible values for $\lambda$. Set 
\[  
J_{\bz}(\Lambda) = 
\left\{ \lambda \in \Lambda : \left\| (\bx + \lambda')^{\frac{1}{2}}( f_{\bz, \lambda } - f_{\bz, \lambda'})\right\|_{\cH} \leq c \;  \sqrt{\lambda' } \; \cS_\bz( \lambda') , \quad \forall \lambda' \in \Lambda, \, \lambda' \leq \lambda 
\right\} \;.
\] 
The parameter choice based on Lepski's principle is given by 
\[ {\lambda}_{\bz} :=  \max_{\lam \in \Lambda}J_{\bz}(\Lambda)  \;. \]
In the context of learning theory, this method was analyzed in 
\cite{de2010adaptive}, \cite{lu2020balancing} and was finally improved by  \cite{blanchard2019lepskii}. 
This parameter choice rule is minimax optimal adaptive under various types of source conditions and eigenvalue decay.

\vspace{0.3cm}

{\bf Discrepancy principle and early stopping rules.} 
The above mentioned criteria require the computation of all estimators to be compared against each other. 
For problems with high dimensions, this can incur a computationally prohibitive cost. 
An alternative approach is the use of early stopping rules based on the discrepancy principle. These rules terminate the procedure at an iteration $t_\bz$  based 
solely on iterates with index $t \leq  t_\bz$ and potentially other quantities computed up to that point. Because they require 
the calculation of significantly fewer iterates, early stopping rules offer the potential to achieve both computational 
and statistical efficiency simultaneously. The stopping criterion is defined based on a validation performance: If the performance does not improve for a certain number of consecutive evaluations, 
the training process is halted. A popular validation criterion, for instance, is based on the smoothed empirical residuals 
\[ \hat R_{t, s}:= || (\bx \bx^*)^{s}(\by - \bx f_{\bz,t}) ||_2 \;, \]  
where $s \geq 0$ and $f_{\bz,t}$ is an iteratively defined estimator, as such as the last iterate of a gradient based method or truncated 
SVD, see e.g. \cite{yao2007early, stankewitz2020smoothed, blanchard2018early, blanchard2018optimal}. The iterative 
process is stopped at the smallest $t_\bz$, such that 
\[  \hat R_{t_\bz, s} \leq \tau \;,\]
for some pre-specified $\tau > 0$. Provided that the inverse problem is moderately ill-posed, 
it is known that this criterion leads to optimal adaptive bounds for $f_{\bz, t_{\bz}}$ for moderate smoothing, i.e. $s <b/2$, over Sobolev-type ellipsoids \cite{stankewitz2020smoothed}.

\vspace{0.3cm}

We finally mention some recent results related to establishing data-driven approaches for finding the optimal regularization parameter, highlighting particularly the current need of a better understanding of such methods in modern machine learning: \cite{alberti2021learning}  focus on the
case where the regularization functional is not given a priori, but learned from data, \cite{chirinos2023learning} study a statistical learning approach,
based on empirical risk minimization, and \cite{de2022machine} propose a novel method based on supervised learning to approximate the high-dimensional function, mapping noisy data into a good approximation to the optimal Tikhonov regularization parameter.

\vspace{0.3cm}


\section{Non-linear problems}
\label{sec:non-linear}

In this section, we discuss the statistical learning problem~\eqref{Model} when $A$ is a nonlinear operator. However, in contrast to Section \ref{sec:linear}, we only focus on Tikhonov regularization. We present classical error bounds for the reconstruction error in Section \ref{sec:non-lin-1} as well as bounds for Tikhonov regularization in Hilbert scales in Section \ref{sec:non-lin-2}. 

\vspace{0.2cm}

For analyzing the non-linear inverse problem \eqref{Model} we require  the  forward operator to be  Fr\'{e}chet differentiable (an assumption that will even be further relaxed below).

\begin{assumption}[Non-linearity of the operator I]
\label{Ass:A.oper}
We impose the following assumptions: 
\begin{enumerate}[(i)]
\item The domain $\mathcal{D}(A)$ is convex with nonempty interior.
\item The operator $A$ is Fr\'{e}chet differentiable.
\item The Fr\'{e}chet derivative~$A'(f)$ of~$A$ at~$f$ is bounded in a ball~$\mathcal{B}_d(\fp)$ of radius~$d:=4\norm{\fp-\fbar}_{\Ho}$, i.e., there exists~$\mu_1 < \infty$ such that
~$$
    \norm{A'(f)}_{\Ho\to\Ht}\leq \mu_1 \qquad \forall~f\in
    \mathcal{B}_d(\fp)\cap\mathcal{D}(A)\subset\Ho.
~$$
\end{enumerate}
\end{assumption}

Recall from Example \ref{ex:non-linear}, the definition of the covariance operator 
\[ \tp:=\bp^*\bp, \quad  \bp=\ip A'(\fp) .  \]




\subsection{Classical Error Bounds}
\label{sec:non-lin-1}

The Tikhonov regularization scheme for this problem is again given by an optimization problem 
\begin{equation}
\label{eq:Tikhonov-standard}
  f_{\zz,\la} =  \argmin_{f\in\mathcal{D}(A)\subset\Ho}\left\{\frac{1}{m}\sum\limits_{i=1}^m\paren{[A(f)](x_i)-y_i}^2+\la \norm{f-\fbar}_{\Ho}^2\right\}. 
\end{equation}
Here, $\fbar\in\cH$ denotes some initial guess of the solution $\fp$, which offers the possibility to incorporate {\em a-priori} information. Note, however, due to the non-linearity of $A$, the solution to the minimization problem is not explicitly given and does not need to be unique.


To establish our error bounds we need additional assumptions on the non-linearity of $A$. 

\begin{assumption}[Non-linearity of the operator II]
\label{Ass:A.oper-2}
We assume:  
\begin{enumerate}[(i)]
\item The operator  $A:\mathcal{D}(A)\subset\Ho\to \Ht\hookrightarrow\LL$ is weakly sequentially closed and one-to-one. 
\item There exists~$\mu_2\geq 0$ such that for
all~$f\in \mathcal{B}_d(\fp)\cap\mathcal{D}(A)\subset\Ho$ we have
$$\norm{A(f)-A(\fp)-A'(\fp)(f-\fp)}_{\nu} \leq \frac{\mu_2}{2}\norm{f-\fp}_{\Ho}^2.$$  
\end{enumerate}
\end{assumption}

The first assumption ensures that there exists a global minimizer of the Tikhonov functional~\eqref{eq:Tikhonov-standard}. If $\da$ is weakly closed (closed and convex) and $A$ is weakly continuous, then weak sequential closedness is guaranteed.  On the other hand, the second assumption helps to control the remainder term of the linearized problem. The second condition also holds true when the Fr{\'e}chet derivative is Lipschitz in the operator norm (see \cite[Chapt.~10]{Engl96}).

As in the linear case, we impose a source condition for $\fp$ to facilitate convergence.  

\begin{assumption}[Source condition]
\label{ass:source-non-lin}
Let $\bar f \in \cH$. We assume that $\fp - \bar f \in \Omega(r, \Ro , T_\nu)$,  for $r \in [\frac{1}{2},1]$ and where the source set is defined in Assumption \ref{Ass:source}.   
\end{assumption}

Under the source condition~$\fp-\fbar=\tp^r u$ for $r \in [\frac{1}{2},1]$, we have that~$\fp-\fbar=\tp^{\frac{1}{2}}w$ for $w=\tp^{r-\frac{1}{2}}u$ satisfying  
\[ \norm{w}_{\HH_1}\leq \norm{T_\nu}^{r - \frac{1}{2}} \cdot \norm{u}_\cH \leq \kappa^{2r-1} \Ro 
\leq \bar\kappa \Ro =:\mu_3 \;,  \]
with $\bar \kappa = \max\{1, \kappa\}$. We additionally assume that 
\begin{equation}\label{smallness}
\Ro < \frac{1}{2\mu_2 \bar\kappa},
\end{equation}
implying that $ 2\mu_2  \mu_3<1$. This assumption is a smallness condition that imposes a constraint between $\mu_3$ and the non-linearity as measured by the parameter~$\mu_2$ in Assumption~\ref{Ass:A.oper} (iii). It ensures that the initial guess is close enough to the true solution and the residual error on linearizing the nonlinear operator~$A$ at the solution is sufficiently small. 

We need an additional restriction to achieve the lower bounds:
\begin{assumption}\label{Ass:lower}
Let us define~$T=A'(f)^*\ip^*\ip A'(f)$ and~$\overline{T}=A'(\fbar)^*\ip^*\ip A'(\fbar)$. There exists a family of bounded linear operators $R_{f}: \Ho \to \Ho$ and $\zeta>0$ such that
\begin{equation*}\label{add.assumption}
T^r=R_{f}\overline{T}^r \text{  and  } \norm{R_{f}-I} \leq \zeta\norm{f-\fbar}_{\Ho},
\end{equation*} 
where~$f$ belongs to the sufficiently large ball and $\fbar$ is the initial guess.
\end{assumption}

\begin{theorem}[Rastogi et al.~\cite{Rastogi20}]
\label{convergence}
Suppose Assumptions~\ref{Ass:fp}--\ref{Ass:Kernel},~\ref{Ass:eigen.decay}, \ref{Ass:A.oper}--\ref{ass:source-non-lin} are satisfied. Assume further that 
\begin{equation}
\label{l.la.condition.k}
8\kappa^2\max \left\{ 1,\frac{\mu_1(M+\Sigma)}{\kappa d}\right\} \log\paren{\frac{4}{\eta}} \leq \sqrt{m}\la.
\end{equation}
Let \eqref{smallness} hold true with $r\in[\frac{1}{2},1]$. Then, for all~$\eta \in (0,1]$, for a regularized least-squares estimator~$\fz$ in~(\ref{eq:Tikhonov-standard}), the following upper bound holds with probability at least $1-\eta$:
\[ \|\fz-\fp\|_{\Ho} \; \leq \;  C \left( \Ro\la^r 
+  \frac{\Sigma}{\sqrt{m}\la^{\frac{b+1}{2}}}  +  \frac{M}{m\la}   \right)  \log\left(\frac{4}{\eta}\right), \]
where $C$ depends on the parameters~$\kappa,~\mu_1,~\mu_2,~\mu_3,~b,~\beta$.
\end{theorem}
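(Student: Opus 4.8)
The plan is to localise the non-linear problem to a ball around $\fp$, linearise there, and reduce everything to the linear Tikhonov analysis already behind Theorem~\ref{lin.err.upper.bound}, using Assumption~\ref{Ass:A.oper-2}(ii) and the smallness bound \eqref{smallness} to neutralise the non-linear remainder. Throughout, write $\sx$ for the sampling operator, $\norm{\cdot}_m$ for the empirical norm, and, following Example~\ref{ex:non-linear}, $\bp=\ip A'(\fp)$, $\bx=\sx A'(\fp)$, $\tp=\bp^*\bp$, $\tx=\bx^*\bx$. By weak sequential closedness (Assumption~\ref{Ass:A.oper-2}(i)) the functional in \eqref{eq:Tikhonov-standard} has a minimiser $\fz$ over $\mathcal{B}_d(\fp)\cap\mathcal{D}(A)$, $d=4\norm{\fp-\fbar}_{\Ho}$; one shows the error bound for this $\fz$, and for $m$ large the bound itself places $\fz$ strictly inside $\mathcal{B}_d(\fp)$, so it is a local minimiser of the unconstrained problem, and Assumptions~\ref{Ass:A.oper}(iii), \ref{Ass:A.oper-2}(ii) apply along the segment from $\fbar$ to $\fz$.

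First I would use the first-order optimality condition for \eqref{eq:Tikhonov-standard}. Inserting $y_i=[A(\fp)](x_i)+\eps_i$ from Assumption~\ref{Ass:fp}, writing $A(\fz)-A(\fp)=A'(\fp)(\fz-\fp)+\mathrm{rem}$ with $\norm{\mathrm{rem}}\le\tfrac{\mu_2}{2}\norm{\fz-\fp}_{\Ho}^2$ by Assumption~\ref{Ass:A.oper-2}(ii), and replacing $A'(\fz)$ by $A'(\fp)$ (the difference being $O(\norm{\fz-\fp}_{\Ho})$ via the same assumption), one obtains a perturbed resolvent identity
\begin{equation*}
\fz-\fp=-\la(\tx+\la I)^{-1}(\fp-\fbar)+(\tx+\la I)^{-1}\bx^*\eps+\text{(remainder)},
\end{equation*}
where the remainder is bounded in $\Ho$ by $C\mu_2\la^{-1/2}\norm{\fz-\fp}_{\Ho}^2$. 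This gives a quadratic inequality $x\le a+cx^2$ for $x=\norm{\fz-\fp}_{\Ho}$, with $a$ the linear contribution and $c\asymp\mu_2\la^{-1/2}$. Since the source condition Assumption~\ref{ass:source-non-lin} makes the bias part of $a$ of order $\Ro\la^{r}$ with $r\ge\tfrac12$, and the smallness condition \eqref{smallness} (equivalently $2\mu_2\mu_3<1$ with $\mu_3=\bar\kappa\Ro$) together with $m$ large (so that the noise part of $a$ is small) ensures $4ca\le1$, the quadratic inequality is solved by $x\le 2a$.

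It then remains to estimate $a$, i.e. to run the linear Tikhonov analysis for $A'(\fp)$. The bias term $\la(\tx+\la I)^{-1}(\fp-\fbar)$ is controlled, after passing from $\tx$ to $\tp$, by $\Ro\cdot\sup_{t}\la t^{r}/(t+\la)\lesssim\Ro\la^{r}$, valid since Tikhonov has qualification $q=1\ge r$ and $\fp-\fbar\in\Omega(r,\Ro,\tp)$. The variance term $(\tx+\la I)^{-1}\bx^*\eps$ is bounded using the Bernstein condition Assumption~\ref{Ass:noise} on the noise moments together with the effective-dimension bound $\cN(\la)\lesssim\la^{-b}$ implied by Assumption~\ref{Ass:eigen.decay}(i), yielding $\Sigma(\sqrt m\,\la^{(b+1)/2})^{-1}+M(m\la)^{-1}$. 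Transferring all estimates between the empirical operators $\tx,\bx^*$ and their population counterparts uses the concentration bounds behind the Proposition (Lemma~9 in \cite{Bauer07}), applied to the feature map $\Phi(x)=A'(\fp)^*F'_x$ of Example~\ref{ex:non-linear}; under \eqref{l.la.condition.k} these keep $\norm{(\tp+\la I)^{1/2}(\tx+\la I)^{-1/2}}$ and its reciprocal bounded by an absolute constant with probability at least $1-\eta$. Collecting the three contributions $\Ro\la^{r}$, $\Sigma(\sqrt m\,\la^{(b+1)/2})^{-1}$, $M(m\la)^{-1}$, each carrying the $\log(4/\eta)$ factor from the concentration inequalities, and a union bound over the relevant events then gives the claimed estimate with $C=C(\kappa,\mu_1,\mu_2,\mu_3,b,\beta)$.

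The main obstacle is the interplay between the non-linearity and the localisation: one must show that the relevant (constrained) minimiser $\fz$ actually lies in the interior of $\mathcal{B}_d(\fp)$ — which, as above, follows only once the final bound is in hand and $m$ is large, so the argument carries a mildly self-referential structure that has to be set up carefully — and one must ensure the quadratic remainder $cx^2$ is genuinely absorbed rather than merely bounded, which is exactly where the quantitative smallness \eqref{smallness} and the lower bound \eqref{l.la.condition.k} on $\sqrt m\,\la$ enter. A secondary technical point is norm bookkeeping: the linearisation residual is naturally estimated only in the weak $\nu$-weighted norm, so it must be routed through $(\tx+\la I)^{-1}\bx^*$ (whose norm is of order $\la^{-1/2}$) rather than measured directly in $\Ho$, and the empirical-versus-population comparisons must be kept consistent at each step.
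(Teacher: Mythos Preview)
The paper does not contain a proof of this theorem: it is stated with attribution to Rastogi et al.~\cite{Rastogi20} and no argument is given in the text. There is therefore nothing in the paper to compare your proposal against.

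That said, your sketch is broadly consistent with the approach one expects from \cite{Rastogi20}: localise to $\mathcal{B}_d(\fp)$, linearise using Assumption~\ref{Ass:A.oper-2}(ii), derive a perturbed normal equation for $\fz-\fp$, bound bias and variance via the source condition and the effective-dimension estimate under Assumption~\ref{Ass:eigen.decay}(i), and absorb the quadratic remainder using the smallness condition \eqref{smallness}. You also correctly flag the two delicate points --- the self-referential localisation (showing $\fz\in\mathcal{B}_d(\fp)$ before invoking the non-linearity bounds) and the norm in which the linearisation residual is measured --- which are indeed where the technical work in \cite{Rastogi20} lies. One caution: your replacement of $A'(\fz)$ by $A'(\fp)$ with error ``$O(\norm{\fz-\fp}_{\Ho})$ via the same assumption'' is not quite what Assumption~\ref{Ass:A.oper-2}(ii) gives directly; that assumption controls the Taylor remainder, not the derivative difference, so in \cite{Rastogi20} the argument is organised slightly differently (working from the minimising property of $\fz$ rather than a literal first-order condition), though the outcome is the same.
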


Again, as in the linear case, the choice of the regularization parameter is crucial. By balancing the first and second term, we obtain the parameter choice 
\begin{equation}
\label{eq:choice-non-lin}
\la_{m, \theta}=\paren{\frac{\Sigma}{\Ro\sqrt{m}}}^{\frac{2}{2r+b+1}}\;,
\end{equation}
with $\theta=(\Ro, \Sigma)$. A short calculation then shows that the third term is of lower order and that \eqref{l.la.condition.k} is satisfied for $m$ sufficiently large. We summarize our findings in the next corollary.

\begin{corollary}
\label{cor:non-lin-err.upper.bound}
Under the same assumptions of Theorem~\ref{convergence} with the parameter choice \eqref{eq:choice-non-lin} we have with probability at least $1-\eta$,
$$\|\fz-\fp\|_{\Ho}\leq C \Ro\paren{\frac{\Sigma}{\Ro\sqrt{m}}}^{\frac{2r}{2r+b+1}}\log\left(\frac{4}{\eta}\right)$$
where $C$ depends on the parameters~$\kappa,~\mu_1,~\mu_2,~\mu_3,~b,~\beta$ and provided that $m$ is sufficiently large.
\end{corollary}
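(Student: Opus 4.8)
The plan is to substitute the balancing choice \eqref{eq:choice-non-lin} into the high-probability estimate of Theorem~\ref{convergence} and to check, by elementary bookkeeping of exponents, that the bias term and the leading variance term coincide and produce the asserted rate, while the remaining term $\frac{M}{m\la}$ is of strictly lower order. Throughout, it is convenient to write $u:=\frac{\Sigma}{\Ro\sqrt{m}}$, so that $\la_{m,\theta}=u^{\frac{2}{2r+b+1}}$ and, crucially, $\frac{\Sigma}{\sqrt{m}}=\Ro\,u$.

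First I would handle the two leading terms. With this choice one has $\Ro\la_{m,\theta}^{\,r}=\Ro\,u^{\frac{2r}{2r+b+1}}$, whereas the variance term becomes $\frac{\Sigma}{\sqrt{m}}\,\la_{m,\theta}^{-\frac{b+1}{2}}=\Ro\,u\cdot u^{-\frac{b+1}{2r+b+1}}=\Ro\,u^{\frac{2r}{2r+b+1}}$, using the identity $1-\frac{b+1}{2r+b+1}=\frac{2r}{2r+b+1}$. Hence both equal $\Ro\paren{\frac{\Sigma}{\Ro\sqrt{m}}}^{\frac{2r}{2r+b+1}}$, which is precisely the claimed rate; this is exactly the reason \eqref{eq:choice-non-lin} is the value that equalizes the first two summands of the bound in Theorem~\ref{convergence}.

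Next I would show that $\frac{M}{m\la_{m,\theta}}$ is negligible and that the sample-size condition \eqref{l.la.condition.k} is eventually satisfied. Since $\la_{m,\theta}$ is of order $m^{-\frac{1}{2r+b+1}}$ (for fixed $\Sigma/\Ro$), the ratio of $\frac{M}{m\la_{m,\theta}}$ to the leading rate $\Ro\,u^{\frac{2r}{2r+b+1}}$ is of order $m^{-\frac{r+b}{2r+b+1}}$ times constants depending on $M,\Ro,\Sigma,r,b$; the exponent is negative because $r,b>0$, so for $m$ large this term is dominated by the leading one and can be absorbed into the constant $C$. By the same computation, $\sqrt{m}\,\la_{m,\theta}$ is of order $m^{\frac{2r+b-1}{2(2r+b+1)}}$, and since $r\in[\tfrac12,1]$ gives $2r-1\ge 0$ and $b>0$ by Assumption~\ref{Ass:eigen.decay}, the exponent $2r+b-1$ is strictly positive; thus $\sqrt{m}\,\la_{m,\theta}\to\infty$, so \eqref{l.la.condition.k} holds for every $m$ past a threshold depending on $\eta,\kappa,\mu_1,M,\Sigma,d$. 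Inserting all of this back into Theorem~\ref{convergence} and carrying along the $\log(4/\eta)$ factor verbatim yields the stated bound with $C$ depending only on $\kappa,\mu_1,\mu_2,\mu_3,b,\beta$.

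There is no genuine obstacle: the argument is routine real-variable algebra, and the only point requiring (minor) care is the inequality $2r+b>1$, which simultaneously guarantees $\sqrt{m}\,\la_{m,\theta}\to\infty$ (so the admissibility condition \eqref{l.la.condition.k} is met for large $m$) and the negligibility of the third term — and this is precisely where the hypothesis $r\ge\tfrac12$ is used. The qualifier ``$m$ sufficiently large'' in the statement simply records the larger of the two thresholds produced above.
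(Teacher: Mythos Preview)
Your proposal is correct and follows exactly the approach the paper sketches immediately before the corollary: balance the first two terms of Theorem~\ref{convergence} via \eqref{eq:choice-non-lin}, verify that $\frac{M}{m\la}$ is of lower order, and check that $\sqrt{m}\la_{m,\theta}\to\infty$ so that \eqref{l.la.condition.k} eventually holds. The paper does not spell out the exponent bookkeeping you carry out, so your write-up is in fact more detailed than what the authors provide.
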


Finally, proper integration using Corollary C.2 in \cite{Blanchard18} gives the upper rate of convergence.

\begin{theorem}
\label{theo:non-lin.rates.gen.u}
Suppose the assumptions of Corollary \ref{cor:non-lin-err.upper.bound} hold true. 
Then, the sequence 
\begin{align}
\label{eq:non-lin.rate.seq}
a_{m,\theta} = \Ro\paren{\frac{\Sigma}{\Ro\sqrt{m}}}^{\frac{2r}{2r+b+1}}
\end{align} 
is an upper rate of convergence in $L^p$ for all $p > 0$, for the reconstruction error, over the family of models $\cM(r, \Ro,  \cP_{\text{poly}}^{<} (\beta, b) )$ for the sequence of solutions \eqref{eq:Tikhonov-standard} with regularization parameter \eqref{eq:choice-non-lin}.
\end{theorem}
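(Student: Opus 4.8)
The statement is obtained by converting the high‑probability bound of Corollary~\ref{cor:non-lin-err.upper.bound} into an $L^p$‑expectation bound, following the tail‑integration scheme already sketched in Section~\ref{Sec:minimax.optimal}. Fix $\theta=(\Ro,\Sigma)$ in the admissible range, take $\la_{m,\theta}$ from \eqref{eq:choice-non-lin}, and abbreviate $a_m:=a_{m,\theta}$ as in \eqref{eq:non-lin.rate.seq}. The first step is to check that $\sqrt m\,\la_{m,\theta}=m^{\frac{2r+b-1}{2(2r+b+1)}}\paren{\Sigma/\Ro}^{\frac{2}{2r+b+1}}\to\infty$, since $r\geq\tfrac12$ and $0<b<1$. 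Hence the admissibility requirement \eqref{l.la.condition.k} of Theorem~\ref{convergence} is met for every confidence level $\eta\geq\eta_m$, where $\eta_m:=4\exp\paren{-\sqrt m\,\la_{m,\theta}/c_\theta}$ and $c_\theta=8\kappa^2\max\{1,\mu_1(M+\Sigma)/(\kappa d)\}$; in particular $\eta_m$ decays super‑polynomially in $1/m$. Corollary~\ref{cor:non-lin-err.upper.bound} then gives, for $m$ large and all $\eta\in[\eta_m,1]$, the tail bound $\rho^m\brac{\norm{\fz-\fp}_{\Ho}>s}\leq 4\,e^{-s/(Ca_m)}$ valid on $s\in[Ca_m\log 4,\,s_\ast]$ with $s_\ast:=Ca_m\log(4/\eta_m)$.

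Next I would split the moment integral $\EE\sbrac{\norm{\fz-\fp}_{\Ho}^p}=\int_0^\infty p\,s^{p-1}\rho^m\brac{\norm{\fz-\fp}_{\Ho}>s}\,ds$ at $s_\ast$. On $[0,s_\ast]$ the exponential tail above (together with the trivial bound $\rho^m\{\cdot\}\leq 1$ on $[0,Ca_m\log 4]$) yields a contribution at most $\paren{Ca_m\log 4}^p+4\,\Gamma(p+1)(Ca_m)^p=C_p'\,a_m^p$, uniformly in $m$ and in $\rho\in\cM_\theta$. For $s>s_\ast$, where the concentration estimate is not informative, I would instead use a crude a priori bound: since $\fp\in\da$ is feasible for \eqref{eq:Tikhonov-standard} and $[A(\fp)](x_i)-y_i=-\varepsilon_i$, comparing $\fz$ with $\fp$ gives $\la_{m,\theta}\norm{\fz-\fbar}_{\Ho}^2\leq\tfrac1m\sum_i\varepsilon_i^2+\la_{m,\theta}\norm{\fp-\fbar}_{\Ho}^2$, hence $\norm{\fz-\fp}_{\Ho}\leq (m\la_{m,\theta})^{-1/2}\paren{\sum_i\varepsilon_i^2}^{1/2}+2\norm{\fp-\fbar}_{\Ho}$. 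Raising to the $2p$‑th power, taking expectations and invoking the Bernstein moment bound of Assumption~\ref{Ass:noise} (which controls $\EE\sbrac{(\sum_i\varepsilon_i^2)^p}\leq C_p\,m^p$) shows that $\EE\sbrac{\norm{\fz-\fp}_{\Ho}^{2p}}$ grows at most polynomially in $m$. Combined with $\rho^m\brac{\norm{\fz-\fp}_{\Ho}>s_\ast}\leq\eta_m$ and Cauchy--Schwarz, the contribution of the range $s>s_\ast$ is bounded by $\eta_m^{1/2}$ times a polynomial in $m$, which is $o(a_m^p)$ because $\eta_m$ is exponentially small in a positive power of $m$ while $a_m$ decays only polynomially. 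This is exactly the content of Corollary~C.2 in \cite{Blanchard18}, which can be invoked once these two ingredients are in place.

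Adding the two pieces gives $\EE\sbrac{\norm{\fz-\fp}_{\Ho}^p}\leq C_p''\,a_m^p$ for $m$ large, with $C_p''$ depending only on $p$, on the constant $C$ of Corollary~\ref{cor:non-lin-err.upper.bound}, and on the model parameters $\kappa,\mu_1,\mu_2,\mu_3,b,\beta$; among these only $\mu_3=\bar\kappa\Ro$ depends on $\theta$, and it stays bounded because of the smallness condition \eqref{smallness}, so the bound is in fact uniform over the admissible $\theta$. Taking $p$‑th roots and then $\sup_{\theta}\limsup_{m}\sup_{\rho\in\cM_\theta}$ of $\EE\sbrac{\norm{\fz-\fp}_{\Ho}^p}^{1/p}/a_{m,\theta}$ yields the finiteness required by the definition of an upper rate of convergence over $\cM(r,\Ro,\cP_{\text{poly}}^{<}(\beta,b))$, which is the assertion of the theorem. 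I expect the only genuinely delicate point to be the treatment of the far tail $s>s_\ast$, where the concentration estimate fails and one must fall back on the minimization property of \eqref{eq:Tikhonov-standard} together with the noise moment bound; the rest is the routine tail‑integration bookkeeping.
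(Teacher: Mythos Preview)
Your proposal is correct and follows the same route as the paper: the paper's entire argument for this theorem is the single sentence ``proper integration using Corollary~C.2 in \cite{Blanchard18} gives the upper rate of convergence'', and you have spelled out exactly what that integration entails, including the admissibility check $\sqrt m\,\la_{m,\theta}\to\infty$, the exponential tail coming from Corollary~\ref{cor:non-lin-err.upper.bound}, and the crude a~priori bound on $\norm{\fz-\fp}_\Ho$ via the minimizing property of \eqref{eq:Tikhonov-standard} to handle the far tail where the concentration estimate ceases to be informative. Your explicit invocation of Corollary~C.2 in \cite{Blanchard18} matches the paper precisely; you simply supply the details the paper omits.
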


\begin{theorem}
\label{non-lin.rates.gen.l}
Let Assumptions~\ref{Ass:fp}--\ref{Ass:Kernel},~\ref{Ass:eigen.decay}, \ref{Ass:A.oper}--\ref{Ass:lower} hold true. 
Then the sequence $(a_{m,\theta})_\theta$ defined in~\eqref{eq:non-lin.rate.seq} is a weak lower rate of convergence in $L^p$ for all $p > 0$ for the reconstruction error, over the family of models $\cM(r, \Ro, \cP_{\text{poly}}^{>} (\alpha, b) )$. 
\end{theorem}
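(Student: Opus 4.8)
The plan is to run the standard minimax lower-bound machinery --- reduction to a multiple-hypothesis testing problem together with Fano's inequality --- exactly as in the proof of Theorem~\ref{lin.rates.gen.l}, but carried out for the \emph{linearized} operator $A'(\fbar)$ rather than for $A$ itself, and then to absorb the non-linear corrections using Assumptions~\ref{Ass:A.oper-2}(ii), \ref{Ass:lower} and the smallness condition \eqref{smallness}. Note that Assumption~\ref{Ass:lower} is tailor-made for exactly this: it compares the solution-dependent covariance $\tp=A'(\fp)^*\ip^*\ip A'(\fp)$ with the one attached to the initial guess, $\overline{T}=A'(\fbar)^*\ip^*\ip A'(\fbar)$.

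First I would fix $\theta=(\Sigma,\Ro)$ with $\Ro$ in the admissible range \eqref{smallness} and, using the freedom in the supremum over $\cM_\theta$, select a design measure $\nu\in\cP_{\text{poly}}^{>}(\al,b)$ for which $\overline{T}$ has a \emph{two-sided} polynomial eigenvalue decay $\sigma_i(\overline{T})\asymp i^{-1/b}$ --- available by a standard construction, as in the linear lower bound --- with eigensystem $(\sigma_i,e_i)_{i\in\NN}$. With $\lam_m=(\Sigma/(\Ro\sqrt m))^{2/(2r+b+1)}$ and $N_m\asymp\lam_m^{-b}$, I would take a Varshamov--Gilbert packing $\cW\subset\{0,1\}^{N_m}$ with $\log|\cW|\gtrsim N_m$ and minimal Hamming distance $\gtrsim N_m$, and set
\[
\fp_\omega \;:=\; \fbar + c\,\Ro\,N_m^{-1/2}\sum_{i=N_m+1}^{2N_m}\omega_{i-N_m}\,\sigma_i^{r}\,e_i \;=\; \fbar + \overline{T}^{\,r} v_\omega, \qquad \omega\in\cW,
\]
for a small absolute constant $c>0$, so that $\norm{v_\omega}_{\Ho}\le c\Ro$. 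Convexity of $\cD(A)$ with $\fbar$ interior keeps $\fp_\omega\in\cD(A)$ for $c$ small, and the lower eigenvalue bound forces pairwise separation $\norm{\fp_\omega-\fp_{\omega'}}_{\Ho}\gtrsim\Ro N_m^{-r/b}\asymp a_{m,\theta}$.

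Two verifications remain. \emph{(i) Membership $(\fp_\omega,\rho_\omega)\in\cM(r,\Ro,\cP_{\text{poly}}^{>}(\al,b))$.} Equip each model with centered Gaussian noise of variance $\asymp\Sigma^2$, which satisfies Assumption~\ref{Ass:noise}. The source condition Assumption~\ref{ass:source-non-lin} is phrased with respect to $\tp$ at the true solution $\fp_\omega$; Assumption~\ref{Ass:lower} gives $\tp^{\,r}=R_{\fp_\omega}\overline{T}^{\,r}$ with $\norm{R_{\fp_\omega}-I}\le\zeta\norm{\fp_\omega-\fbar}_{\Ho}<1$ once $c$ is small (here \eqref{smallness} and the smallness of the perturbation enter), so $\fp_\omega-\fbar=\overline{T}^{\,r}v_\omega=\tp^{\,r}\,R_{\fp_\omega}^{-1}v_\omega$ with $\norm{R_{\fp_\omega}^{-1}v_\omega}_{\Ho}\le\Ro$ after adjusting $c$; by Weyl monotonicity the same identity transfers the two-sided decay of $\sigma_i(\overline{T})$ to $\sigma_i(\tp)$, so $\nu\in\cP_{\text{poly}}^{>}(\al,b)$ w.r.t.\ $\tp$ as well. \emph{(ii) KL bound.} For Gaussian noise $\mathrm{KL}(\rho_\omega^m\|\rho_{\omega'}^m)=\tfrac{m}{2\Sigma^2}\norm{A(\fp_\omega)-A(\fp_{\omega'})}_\nu^2$; by Assumption~\ref{Ass:A.oper-2}(ii) applied with true solution $\fp_{\omega'}$ (valid since $\fp_\omega\in\cB_d(\fp_{\omega'})$) and Assumption~\ref{Ass:A.oper}(iii), $A(\fp_\omega)-A(\fp_{\omega'})$ equals $A'(\fp_{\omega'})(\fp_\omega-\fp_{\omega'})$ up to a remainder of order $\norm{\fp_\omega-\fp_{\omega'}}_{\Ho}^2$, which \eqref{smallness} renders lower order (for $r=\tfrac12$ this is exactly where the smallness condition is needed), and the leading quadratic form is comparable to $\scalar{\overline{T}(\fp_\omega-\fp_{\omega'})}{\fp_\omega-\fp_{\omega'}}{\Ho}\lesssim\Ro^2 N_m^{-(2r+1)/b}\asymp\Sigma^2 m^{-1}N_m$, using the \emph{upper} polynomial bound on $\sigma_i(\overline{T})$. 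Hence $\mathrm{KL}(\rho_\omega^m\|\rho_{\omega'}^m)\lesssim N_m\lesssim\log|\cW|$, Fano's inequality yields that every estimator incurs $\cH$-error $\gtrsim a_{m,\theta}$ with probability bounded away from zero, Markov's inequality passes this to $\EE[\norm{f_\bz-\fp}_{\Ho}^p]^{1/p}$ for every $p>0$, and $\limsup_{m\to\infty}$ along the subsequence on which $N_m$ is the appropriate integer gives the claimed weak lower rate.

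I expect the main obstacle to be the joint calibration in (i)--(ii): the perturbation amplitude $c\Ro N_m^{-1/2}$ must be large enough to realize the optimal separation $a_{m,\theta}$, yet small enough that simultaneously (a) $\norm{R_{\fp_\omega}-I}<1$ and \eqref{smallness} hold so that the source condition transfers through Assumption~\ref{Ass:lower}, and (b) the quadratic Taylor remainder in $\norm{A(\fp_\omega)-A(\fp_{\omega'})}_\nu$ stays below the linearized term so that the comparison with $\overline{T}$ is not spoiled. Since all $\fp_\omega$ lie within $O(a_{m,\theta})$ of $\fbar$ and $a_{m,\theta}\to0$, these constraints are compatible for $m$ large, but tracking how the constants $\mu_1,\mu_2,\zeta,\beta$ propagate through the comparison of $\tp$ with $\overline{T}$ --- and confirming that the required $\nu$ with two-sided spectral decay genuinely exists in $\cP_{\text{poly}}^{>}(\al,b)$ --- is where the real work lies; this is precisely the additional leverage that Assumption~\ref{Ass:lower} is designed to provide.
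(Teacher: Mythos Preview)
The paper itself does not supply a proof of Theorem~\ref{non-lin.rates.gen.l}: this is a survey article, and the result is quoted from Rastogi et al.~\cite{Rastogi20} without reproduction of the argument (the same is true of the linear analogue, Theorem~\ref{lin.rates.gen.l}, which is quoted from~\cite{Blanchard18}). There is therefore no ``paper's own proof'' to compare against directly.

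That said, your strategy is the expected one and matches the approach in the original sources: construct a Varshamov--Gilbert packing in the range of $\overline{T}^{\,r}$, take Gaussian noise to get an explicit KL divergence, apply Fano, and use the non-linear hypotheses to make the construction legitimate for the non-linear model class. The two places where the non-linearity enters --- transferring the source condition from $\overline{T}$ to the $\fp_\omega$-dependent covariance via Assumption~\ref{Ass:lower}, and controlling $\norm{A(\fp_\omega)-A(\fp_{\omega'})}_\nu$ by the linearized quantity via Assumption~\ref{Ass:A.oper-2}(ii) --- are exactly the right pressure points, and you have identified them correctly.

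Two technical remarks on the execution. First, the phrase ``Weyl monotonicity'' is not quite the tool you want for the eigenvalue transfer: what you actually use is the singular-value inequality $\sigma_i(B)/\norm{R^{-1}}\le\sigma_i(RB)\le\norm{R}\sigma_i(B)$ applied to $\tp^{\,r}=R_{\fp_\omega}\overline{T}^{\,r}$, together with $\sigma_i(\tp^{\,r})=\sigma_i(\tp)^r$ for the self-adjoint positive $\tp$. Second, Assumption~\ref{Ass:A.oper-2}(ii) as written fixes the expansion point at the single $\fp$; in the lower-bound construction each $\fp_{\omega'}$ \emph{is} the true solution of its own model $\rho_{\omega'}\in\cM_\theta$, so the assumption is available there, but you should say this explicitly rather than invoking it as a uniform Taylor bound. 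With these clarifications your outline is sound.
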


Finally, by Definition \ref{def:optimality}, we may conclude: 

\begin{theorem}
\label{theo:non-lin-rates-optimal}
Suppose the Assumptions of Theorem \ref{theo:non-lin.rates.gen.u} and Theorem \ref{non-lin.rates.gen.l} are satisfied. Then the sequence of solutions $(\fz)_m$  using the regularization parameters given in \eqref{eq:choice-non-lin} is weak minimax optimal in $L^p$, for all $p>0$, for the reconstruction error, over the model family $\cM(r, \Ro, \cP' )$, with $\cP'= \cP_{\text{poly}}^{<} (\beta, b)  \cap \cP_{\text{poly}}^{>} (\alpha, b) )$.
\end{theorem}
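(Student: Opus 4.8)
The statement is a packaging result: by Definition~\ref{def:optimality}, the sequence $(\fz)_m$ obtained from the parameter choice \eqref{eq:choice-non-lin} is \emph{weak minimax optimal} in $L^p$ for the reconstruction error over $\cM(r,\Ro,\cP')$ with rate $(a_{m,\theta})_\theta$ precisely when $(a_{m,\theta})_\theta$ is simultaneously (i) an upper rate of convergence for this very sequence over $\cM(r,\Ro,\cP')$, and (ii) a weak minimax lower rate of convergence over $\cM(r,\Ro,\cP')$. The plan is to produce (i) from Theorem~\ref{theo:non-lin.rates.gen.u} and (ii) from Theorem~\ref{non-lin.rates.gen.l}, paying attention to how the defining suprema behave when the model family is restricted to the intersection class $\cP' = \cP_{\text{poly}}^{<}(\beta,b)\cap\cP_{\text{poly}}^{>}(\alpha,b)$.

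For (i): Theorem~\ref{theo:non-lin.rates.gen.u} asserts that $(a_{m,\theta})_\theta$ in \eqref{eq:non-lin.rate.seq} is an upper rate of convergence over $\cM(r,\Ro,\cP_{\text{poly}}^{<}(\beta,b))$ for the Tikhonov solutions \eqref{eq:Tikhonov-standard} with parameter \eqref{eq:choice-non-lin}. Since $\cP'\subset\cP_{\text{poly}}^{<}(\beta,b)$, we have $\cM(r,\Ro,\cP')\subset\cM(r,\Ro,\cP_{\text{poly}}^{<}(\beta,b))$, so the inner supremum $\sup_{\rho\in\cM_\theta}$ in the upper-rate inequality of Definition~\ref{def:optimality}'s companion definition is taken over a smaller set; the finiteness of the normalized limsup is therefore preserved, and $(a_{m,\theta})_\theta$ is an upper rate over $\cM(r,\Ro,\cP')$ as well. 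This step is pure bookkeeping and requires no new estimate.

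For (ii): Theorem~\ref{non-lin.rates.gen.l} gives that $(a_{m,\theta})_\theta$ is a weak minimax lower rate over $\cM(r,\Ro,\cP_{\text{poly}}^{>}(\alpha,b))$. Here the restriction to $\cP'$ goes in the unfavourable direction, since shrinking the model family can only weaken a minimax lower bound. Hence the main (and essentially only nonroutine) obstacle is to revisit the hard-instance construction underlying Theorem~\ref{non-lin.rates.gen.l} and check that the marginal measures $\nu$ used there can be taken to lie simultaneously in $\cP_{\text{poly}}^{<}(\beta,b)$, i.e.\ that the eigenvalues of the associated $\tp$ obey the upper polynomial bound $\sigma_i\le\beta i^{-1/b}$ in addition to the lower bound $\sigma_i\ge\alpha i^{-1/b}$. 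In the standard lower-bound machinery (following \cite{Caponnetto07,Blanchard18}) the hard instances are built from a fixed marginal whose covariance operator $\tp$ has eigenvalues comparable to $i^{-1/b}$ from both sides together with a Varshamov--Gilbert family of well-separated source elements in $\Omega(r,\Ro,\tp)$ satisfying Assumptions~\ref{Ass:fp}--\ref{Ass:noise} and the source and non-linearity conditions; these instances therefore already belong to $\cM(r,\Ro,\cP')$ (possibly after adjusting the constants $\alpha,\beta$). Consequently $\inf_{f_\cdot}\sup_{\rho\in\cM_\theta}(\cdots)$ is unchanged when $\cP_{\text{poly}}^{>}(\alpha,b)$ is replaced by $\cP'$, and $(a_{m,\theta})_\theta$ remains a weak minimax lower rate over $\cM(r,\Ro,\cP')$.

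Combining (i) and (ii) and invoking Definition~\ref{def:optimality} yields the claimed weak minimax optimality. One should note that, unlike the linear case in Theorem~\ref{theo:lin-rates-optimal}, only \emph{weak} optimality is obtained here, because Theorem~\ref{non-lin.rates.gen.l} provides only the weak (limsup) lower rate; a strong version would require, in addition, a strong lower rate over a class such as $\cP_{\text{str}}^{>}(\gamma)$ in the non-linear setting, which is not established. The write-up of the full proof then reduces to: (a) spelling out the monotonicity of the upper-rate supremum under $\cM(r,\Ro,\cP')\subset\cM(r,\Ro,\cP_{\text{poly}}^{<}(\beta,b))$, and (b) citing the construction in the proof of Theorem~\ref{non-lin.rates.gen.l} and remarking that its marginals satisfy the two-sided eigenvalue bounds, hence lie in $\cP'$.
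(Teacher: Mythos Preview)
Your proposal is correct and follows the paper's approach, which simply invokes Definition~\ref{def:optimality} after Theorems~\ref{theo:non-lin.rates.gen.u} and~\ref{non-lin.rates.gen.l} without further argument. You are in fact more careful than the paper: you explicitly address why the lower bound survives restriction to the intersection class $\cP'=\cP_{\text{poly}}^{<}(\beta,b)\cap\cP_{\text{poly}}^{>}(\alpha,b)$ (namely, the hard-instance marginals in the standard construction have two-sided polynomial eigenvalue decay), a point the paper leaves implicit.
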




\subsection{Error Bounds in Hilbert Scales}
\label{sec:non-lin-2}

Let $L: \cD(L) \subset \cH \to \cH$ be an unbounded operator, generating a Hilbert scale. 
The Tikhonov regularization scheme in Hilbert Scales amounts to minimize the regularized empirical risk 
\begin{equation}\label{eq:Tikhonov-standard.HS}
  f_{\zz,\la} =  \argmin_{f\in\mathcal{D}(A)\cap\dl}
  \left\{\frac{1}{m}\sum\limits_{i=1}^m\paren{[A(f)](x_i)-y_i}^2+\la \norm{L(f-\fbar)}_{\Ho}^2\right\},  
\end{equation}
enforcing smoothness and with initial guess $\bar f \in \cD(A) \cap \cD(L)$. As above, due to the non-linearity of $A$, the solution to the minimization problem is not explicitly given and does not need to be unique. To establish error bounds also in this case, the nonlinear structure of the operator needs to be further specified.

\begin{assumption}[Non-linearity of the operator III]
\label{Ass:A.oper.L}
We suppose that:
\begin{enumerate}[(i)]
\item The operator $A:\mathcal{D}(A)\cap \dl \to \Ht$ is weakly sequentially closed.

\item Link condition: There exist constants~$p\geq 1$ and~$0 < c_1 \leq c_2$ such that for all~$g\in \Ho$,
   \begin{equation*}
  c_1 \|g\|_{{-(p-1)}}\leq \|A'(\fp)g\|_{\nu}\leq c_2 \|g\|_{{-(p-1)}}.
   \end{equation*}
   
\item The Fr\'{e}chet derivative~$A'(f)$ is Lipschitz continuous for all~$f\in \mathcal{D}(A)\cap\dl~$, i.e., there exists a constant~$\mu_4$ such that
~$$\|A'(\fp)-A'(f)\|_{-(p-1),\nu}\leq \mu_4\|\fp-f\|_{\Ho}\leq \frac{c_1^2}{2c_2},$$
\end{enumerate}
where the norm $\norm{\cdot}_{-(p-1),\nu}$ is defined for the operators $\HH_{-(p-1)}$ to $\LL$.
\end{assumption}

The link condition, described in Assumption~\ref{Ass:A.oper.L} (ii)  involves the interplay between the operator $L^{-1}$ and the Fr{\'e}chet derivative of $A$.

\vspace{0.2cm}

We incorporate a-priori smoothness in terms of a more general source condition.  

\begin{assumption}[Source condition]
\label{ass:source-non-lin2}
Let $\bar f \in \cD(L)$, $s \in [1, p+1]$ for some benchmark smoothness $p\geq 1$. We assume that $\fp - \bar f \in \Omega(s, \Ro , L^{-1})$, where the source set is defined in Assumption \ref{Ass:source.HS}.   
\end{assumption}

Having now established all assumptions, we are ready to state an upper bound for the reconstruction error.

\begin{theorem}[Rastogi et al.~\cite{Rastogi20a}]\label{err.upper.bound}
Suppose Assumptions~\ref{Ass:fp}--\ref{Ass:Kernel}, \ref{Ass:eigen.decay}, \ref{Ass:A.oper}, \ref{Ass:A.oper.L} and \ref{ass:source-non-lin2} hold true. Then, for the Tikhonov estimator~$\fz~$ in~\eqref{eq:Tikhonov-standard.HS} with the a-priori choice of the regularization parameter~$\la=\paren{\frac{1}{\sqrt{m}}}^{\frac{2p}{p+s-1+bp}}$, for all~$0<\eta<1$, the following error bound holds with the confidence~$1-\eta$:
$$\norm{\fz -\fp}_\Ho \leq C\paren{\frac{1}{\sqrt{m}}}^{\frac{s}{p+s-1+bp}}\log^2\paren{\frac{4}{\eta}}.$$
\end{theorem}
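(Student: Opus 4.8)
\emph{Proof plan.} The plan is to reduce the nonlinear problem to a perturbed linear one and then run the argument behind the linear Hilbert-scale estimate of Theorem~\ref{err.upper.bound.gen.k}. Two reductions are involved. First, linearize $A$ at $\fp$, writing $A(f)-A(\fp)=A'(\fp)(f-\fp)+\varrho(f)$; by the Lipschitz continuity of the Fr\'echet derivative in Assumption~\ref{Ass:A.oper.L}(iii) together with the continuous embedding $\cH\hookrightarrow\cH_{-(p-1)}$ (valid for $p\geq 1$, with $L^{-1}$ bounded), the remainder satisfies $\norm{\varrho(f)}_\nu\leq\tfrac{\mu_4}{2}C_L\norm{f-\fp}_\Ho^2$ for $f$ in the ball $\mathcal B_d(\fp)$ of Assumption~\ref{Ass:A.oper}(iii). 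Second, substitute the Hilbert-scale variable $u=L(f-\fbar)$, with $\up:=L(\fp-\fbar)$ and $\uz:=L(\fz-\fbar)$, so that \eqref{eq:Tikhonov-standard.HS} becomes a standard $\ell^2$-penalized least-squares problem for $A'(\fp)L^{-1}$ (plus the nonlinear perturbation $\varrho$), whose population covariance operator is $\Lambda_\nu:=L^{-1}\tp L^{-1}$. Read with $g=L^{-1}u$, the link condition Assumption~\ref{Ass:A.oper.L}(ii) is exactly $c_1\norm{L^{-p}u}_\Ho\leq\norm{\Lambda_\nu^{1/2}u}_\Ho\leq c_2\norm{L^{-p}u}_\Ho$, i.e. Assumption~\ref{Ass:link} with $ap=\tfrac12$; hence $L^{-1}\simeq\Lambda_\nu^{1/(2p)}$ up to the constants $c_1,c_2$, the weak norm $\norm{\cdot}_{-1}$ in which we control $\uz-\up$ corresponds to $\Lambda_\nu^{1/(2p)}$-smoothing, and the source condition $\fp-\fbar=L^{-s}v$ of Assumption~\ref{ass:source-non-lin2} transfers to $\up=L^{-(s-1)}v\in\range\bigl(\Lambda_\nu^{(s-1)/(2p)}\bigr)$, still with $\Ho$-norm bounded by $\Ro$.

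Next I would derive the basic inequality from the minimality of $\fz$ (which exists by the weak sequential closedness in Assumption~\ref{Ass:A.oper.L}(i) and the convexity of $\da$): from $J_\la(\fz)\leq J_\la(\fp)$, inserting $y_i=A(\fp)(x_i)+\varepsilon_i$, expanding and cancelling $\tfrac1m\sum_i\varepsilon_i^2$,
\[
\norm{\sx\bigl(A(\fz)-A(\fp)\bigr)}_m^2+\la\norm{L(\fz-\fbar)}_\Ho^2\ \leq\ 2\bigl|\scalar{\bm{\varepsilon}}{\sx(A(\fz)-A(\fp))}{m}\bigr|+\la\norm{L(\fp-\fbar)}_\Ho^2 .
\]
Substituting $A(\fz)-A(\fp)=A'(\fp)(\fz-\fp)+\varrho(\fz)$, writing $h:=\fz-\fp$ and $w:=Lh=\uz-\up$, and invoking the concentration estimate of Bauer et al.\ (Lemma~9 in~\cite{Bauer07}) under the Bernstein noise Assumption~\ref{Ass:noise} — applicable since Assumption~\ref{Ass:A.oper}(iii) makes the feature map $x\mapsto A'(\fp)^*F'_x$ bounded — one controls, on an event of probability $\geq 1-\eta$, both $\norm{\tp-\tx}$ and $\norm{\bp^*\gp-\bx^*\yy}_\Ho$, hence also $\norm{\Lambda_\nu-\Lambda_\xx}$ ($\Lambda_\xx:=L^{-1}\tx L^{-1}$) and the transformed cross term, at the price of $\log(4/\eta)$ factors. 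In the regime where $\la$ dominates these fluctuations — which holds for $\la=(1/\sqrt m)^{2p/(p+s-1+bp)}$ once $m$ is large — the usual resolvent comparisons $\norm{(\Lambda_\nu+\la)^{1/2}(\Lambda_\xx+\la)^{-1/2}}\lesssim 1$ hold, allowing empirical quantities to be replaced by population ones.

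This yields, up to $\log(4/\eta)$ powers, an estimate of the schematic form
\[
\norm{h}_\Ho\ \lesssim\ \underbrace{\Ro\,\la^{\frac{s}{2p}}}_{\text{bias}}\ +\ \underbrace{\frac{\Sigma}{\sqrt m}\,\la^{\frac{1}{2p}-\frac{b+1}{2}}}_{\text{variance}}\ +\ \underbrace{\frac{M}{m}\,\la^{\frac{1}{2p}-1}}_{\text{lower order}}\ +\ \underbrace{\frac{\mu_4 C_L}{\sqrt\la}\,\norm{h}_\Ho^2}_{\text{nonlinear}} .
\]
The bias comes from the transferred source condition and the qualification of Tikhonov: since $s/(2p)\leq(p+1)/(2p)\leq 1$, the bound $\norm{\Lambda_\nu^{s/(2p)}\ra(\Lambda_\nu)}\lesssim\la^{s/(2p)}$ applies. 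The variance uses the Bernstein condition, the factor $\norm{\Lambda_\nu^{1/(2p)}(\Lambda_\nu+\la)^{-1/2}}\lesssim\la^{1/(2p)-1/2}$ needed to pass from $\Lambda_\nu^{-1/2}$-scaled noise to the $\norm{\cdot}_{-1}$-scale, and the effective-dimension bound $\cN(\la)\lesssim\la^{-b}$ coming from the polynomial eigenvalue decay of $\Lambda_\nu$ (Assumption~\ref{Ass:eigen.decay}(i)).

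The step I expect to be the main obstacle is the nonlinear term, which is quadratic in $\norm{h}_\Ho$ and cannot be moved across the inequality for free. One first needs a crude a~priori bound $\norm{h}_\Ho\leq d$ placing $\fz$ in the ball $\mathcal B_d(\fp)$ where Assumption~\ref{Ass:A.oper.L} and the smallness constraint $\mu_4\norm{\fp-f}_\Ho\leq c_1^2/(2c_2)$ built into~\ref{Ass:A.oper.L}(iii) hold — the Hilbert-scale analogue of the smallness condition~\eqref{smallness} used in Theorem~\ref{convergence}. With such a bound $\tfrac{\mu_4 C_L}{\sqrt\la}\norm{h}_\Ho^2\leq\tfrac{\mu_4 C_L d}{\sqrt\la}\norm{h}_\Ho$, and — either by absorbing it after a harmless adjustment of $\la$-powers, or by first establishing $\norm{h}_\Ho\lesssim\sqrt\la$ and bootstrapping — the quadratic term is removed; throughout, every interpolation between powers of $L$ and of $\Lambda_\nu$ carries the link constants $c_1,c_2$, which must be tracked so that the absorption and the smallness constraint stay self-consistent. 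Once this is done, balancing $\Ro\la^{s/(2p)}$ against $\tfrac{\Sigma}{\sqrt m}\la^{1/(2p)-(b+1)/2}$ gives exactly $\la\sim m^{-p/(p+s-1+bp)}$, renders the remaining term of strictly lower order (as $s/2+bp>1/2$ for $s\geq1$), and produces the rate $(1/\sqrt m)^{s/(p+s-1+bp)}$; the repeated applications of the concentration estimate (for $\tp$, for the cross term, and in the bias replacement) account for the $\log^2(4/\eta)$ factor.
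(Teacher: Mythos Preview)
The paper does not supply its own proof of this theorem: it is stated as a result of Rastogi et al.~\cite{Rastogi20a} and left uncited beyond that attribution, so there is no in-paper argument to compare against line by line. That said, your plan is the standard one used in~\cite{Rastogi20a} and is correct in outline: substitute $u=L(f-\fbar)$ to reduce to Tikhonov for the operator $A'(\fp)L^{-1}$ with covariance $\Lambda_\nu=L^{-1}\tp L^{-1}$; read the link condition~\ref{Ass:A.oper.L}(ii) as $\norm{L^{-p}u}_\Ho\asymp\norm{\Lambda_\nu^{1/2}u}_\Ho$ (so $a=1/(2p)$ in the notation of Assumption~\ref{Ass:link}); transfer the source condition to $\up\in\range(\Lambda_\nu^{(s-1)/(2p)})$; and recover $\norm{\fz-\fp}_\Ho$ from $\norm{\Lambda_\nu^{1/(2p)}(\uz-\up)}_\Ho$. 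The resulting bias $\la^{s/(2p)}$ and variance $\la^{1/(2p)-(b+1)/2}/\sqrt m$ balance exactly at $\la=(1/\sqrt m)^{2p/(p+s-1+bp)}$, matching the statement.

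One small refinement worth noting: from the Lipschitz assumption~\ref{Ass:A.oper.L}(iii) the linearization remainder is more naturally bounded as $\norm{\varrho(f)}_\nu\leq\tfrac{\mu_4}{2}\norm{f-\fp}_\Ho\norm{f-\fp}_{-(p-1)}$ (the operator norm there maps $\cH_{-(p-1)}\to\LL$), which via the link condition is $\lesssim\tfrac{\mu_4}{2}\norm{h}_\Ho\norm{\Lambda_\nu^{1/2}w}_\Ho$. This mixed form is precisely what allows absorption by the data-fidelity term in the basic inequality and makes the smallness constraint $\mu_4\norm{\fp-f}_\Ho\le c_1^2/(2c_2)$ self-consistent, without first passing through the cruder $\norm{h}_\Ho^2$ bound.
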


We will discuss the optimality of convergence rates later in this section.

\vspace{0.3cm}

{\bf Conditional stability estimates.} 
The above results require the differentiability of the operator~$A$. This condition can be relaxed by imposing \emph{conditional stability}. A  conditional stability estimate provides a tool to characterize the degree of ill-posedness in inverse problems and measures the  sensitivity of a solution with respect to changes in the input or observational data. This assumption implicitly addresses both the nonlinearity conditions and solution smoothness within the underlying nonlinear inverse problem. We will also establish the connection between link condition~\ref{Ass:A.oper.L} (ii) and the conditional stability estimates~\ref{Ass:A} (iv).

\vspace{0.2cm}

\begin{assumption}
\label{Ass:A}
\begin{enumerate}[(i)]

\item The domain~$\da$ of~$A$ is a convex and closed subset of~$\HH$.

\item The operator~$A : \da \to \HH'$ is weak-to-weak sequentially continuous\footnote{i.e.,~$f_n \rightharpoonup \hat{f}\in\HH$ with~$f_n \in \da$,~$n \in \NN$, and~$\hat{f} \in \da$ implies~$A(f_n)\rightharpoonup  A(\hat{f}) \in \HH'$.}.

\item The operator~$A$ is Lipschitz continuous with a Lipschitz constant~$\ell_A < \infty$ in a sufficiently large ball~$\mathcal{B}_d(\fp)$,
\begin{equation*}\label{A.cont}
\norm{A(f)-A(\tilde{f})}_{\HH'}\leq \ell_A \norm{f-\tilde{f}}_{\HH} \qquad\forall f ,\tilde{f}\in \mathcal{B}_d(\fp) \cap \mathcal{D}(A) \subset \HH,
\end{equation*}

\item Conditional stability estimate: There exist constants~$p\geq 1$,~$\al> 0$,~$d > 0$,~$\theta\geq 0$ and~$Q \subset\mathcal{D}_d^\theta(\fp)\cap \da$ such that
\begin{equation*}
\norm{f-\fp}_{-(p-1)}\leq \al\norm{A(f)-A(\fp)}_{\nu}
\end{equation*}
for all~$f \in Q$, where the constant~$\al$ may depend on~$p$, and~$Q$.
\end{enumerate}
\end{assumption}

\vspace{0.2cm}

Conditional stability estimates require additional regularization for obtaining stable approximate solutions if the validity area $Q$ of such estimates is not completely known. Conditional stability estimates can (in the absence of differentiability) be verified by means of global inequalities of the forward operator $A$. For examples, we refer the reader to \cite{werner2019convergence}.

\vspace{0.2cm}


\begin{theorem}[Rastogi et al.~\cite{Rastogi22}]\label{err.upper.bound.p}
Suppose Assumptions~\ref{Ass:fp}--\ref{Ass:Kernel}, \ref{Ass:eigen.decay},~\ref{ass:source-non-lin2} and~\ref{Ass:A}  hold true and~$\fz,\fp\in Q$ (for a sufficiently large sample size~$m$) for~$p$,~$Q$, and~$s$ defined in Assumption~\ref{Ass:A}~(iv). Then, for the Tikhonov estimator~$\fz~$ in~\eqref{eq:Tikhonov-standard.HS} with the a-priori choice of the regularization parameter~$\la=\paren{\frac{1}{\sqrt{m}}}^{\frac{2p}{p+s-1+bp}}$, for all~$0<\eta<1$, the following error bound holds with the confidence~$1-\eta$:
$$\norm{\fz -\fp}_\Ho \leq C\paren{\frac{1}{\sqrt{m}}}^{\frac{s}{p+s-1+bp}}\log^2\paren{\frac{4}{\eta}}.$$
\end{theorem}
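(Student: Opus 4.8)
The plan is to re-run the argument of Theorem~\ref{err.upper.bound} (see \cite{Rastogi20a}) but to replace every appeal to Fr\'echet differentiability of $A$ and to the two-sided link condition (as used there) by the single conditional stability estimate Assumption~\ref{Ass:A}(iv) together with the Lipschitz bound Assumption~\ref{Ass:A}(iii). Existence of a minimizer $\fz$ of the empirical Tikhonov functional in~\eqref{eq:Tikhonov-standard.HS} follows from the weak sequential closedness and weak-to-weak continuity of $A$ (Assumption~\ref{Ass:A}(i),(ii)) and the coercivity of the penalty. The starting point is then the \emph{basic inequality}: comparing the value of the empirical functional at $\fz$ with its value at $\fp$ and substituting $y_i=[A(\fp)](x_i)+\varepsilon_i$ yields, with $\sx$ the sampling operator and $\bm{\varepsilon}=(\varepsilon_1,\dots,\varepsilon_m)$,
\[
\norm{\sx(A(\fz)-A(\fp))}_m^2+\la\norm{L(\fz-\bar f)}_\HH^2\;\leq\;2\scalar{\bm{\varepsilon}}{\sx(A(\fz)-A(\fp))}{m}+\la\norm{L(\fp-\bar f)}_\HH^2.
\]

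The next step is to replace the empirical quantities by their population analogues and peel off the noise. Since $A(\fz),A(\fp)\in\Ht$ and $\Ht$ is an RKHS with bounded kernel, $\norm{\sx\,\cdot\,}_m$ can be compared with $\norm{\,\cdot\,}_\nu$ through a perturbation bound of the type of Lemma~9 in~\cite{Bauer07}; the a-priori control on the RKHS norm of $A(\fz)$ needed there follows from Lipschitz continuity of $A$ applied to a crude bound on $\norm{\fz-\bar f}_\HH$ that is itself extracted, in bootstrap fashion, from the penalty term. The cross term is estimated with Bernstein's inequality (Assumption~\ref{Ass:noise}) and the effective dimension bound $\mathcal N(\la)\leq\frac{\beta b}{b-1}\la^{-b}$ implied by Assumption~\ref{Ass:eigen.decay}(i); a Young splitting absorbs half of it into the residual $\norm{A(\fz)-A(\fp)}_\nu^2$, leaving remainder terms of order $\Sigma^2\la^{-b}/m$ and $M^2/(m^2\la^2)$. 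Using the source condition (Assumption~\ref{ass:source-non-lin2}) and the embedding $\HH_s\hookrightarrow\HH_1$ for $s\geq1$ to bound $\norm{L(\fp-\bar f)}_\HH\lesssim\Ro$, one arrives, with probability at least $1-\eta$, at an estimate of the shape
\[
\norm{A(\fz)-A(\fp)}_\nu^2+\la\norm{L(\fz-\bar f)}_\HH^2\;\lesssim\;\Big(\tfrac{\Sigma^2\la^{-b}}{m}+\la\Ro^2\Big)\log^2\!\big(\tfrac{4}{\eta}\big).
\]

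Now comes the point where this theorem genuinely departs from Theorem~\ref{err.upper.bound}. Since $\fz,\fp\in Q$ by hypothesis, the conditional stability estimate Assumption~\ref{Ass:A}(iv) applies \emph{directly} — no linearization, hence no remainder to control — and gives $\norm{\fz-\fp}_{-(p-1)}\leq\al\norm{A(\fz)-A(\fp)}_\nu$, a bound in the weak norm of $\HH_{-(p-1)}$. At the same time the penalty estimate above, together with the source condition, controls the strong norm $\norm{L(\fz-\fp)}_\HH=\norm{\fz-\fp}_{\HH_1}\leq\norm{L(\fz-\bar f)}_\HH+\norm{L(\fp-\bar f)}_\HH$. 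Interpolating in the Hilbert scale, using that $0=\tfrac1p(-(p-1))+(1-\tfrac1p)\cdot1$,
\[
\norm{\fz-\fp}_\HH\;\leq\;\norm{\fz-\fp}_{-(p-1)}^{1/p}\,\norm{\fz-\fp}_{1}^{1-1/p}.
\]
Inserting the two estimates and choosing $\la=\big(1/\sqrt m\big)^{2p/(p+s-1+bp)}$ to balance the approximation against the variance contribution yields the asserted rate $\big(1/\sqrt m\big)^{s/(p+s-1+bp)}$, with the logarithmic factors from the concentration steps collected into $\log^2(4/\eta)$ and a routine check that the $M^2/(m^2\la^2)$-type remainder is of strictly lower order for $m$ large.

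The main obstacle is the circular dependence in the middle step: the empirical-to-population comparison, and ultimately the use of conditional stability, both require an a-priori bound placing $\fz$ in a region where Assumption~\ref{Ass:A}(iii)--(iv) are available (this is exactly what the hypothesis ``$\fz,\fp\in Q$ for sufficiently large $m$'' encodes), yet the only source of such a bound is the penalty term, which is only controlled \emph{after} the comparison; resolving this needs a careful bootstrap, and it is also where the largeness threshold on $m$ enters. A secondary, milder difficulty is that, the upper half of the link condition being unavailable, the bias (approximation) term has to be treated solely via the source condition and the penalty, and one must check that the exponents of $\la$ produced this way combine through the interpolation inequality to reproduce exactly the exponent $s/(p+s-1+bp)$.
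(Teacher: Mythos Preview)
The paper itself does not supply a proof of this theorem; it merely quotes the result from \cite{Rastogi22}. So the comparison here is against the argument one actually needs to carry out (as in \cite{Rastogi22}), and your sketch has a genuine gap at the bias step.

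The problem is how you use the source condition. You invoke Assumption~\ref{ass:source-non-lin2} only through the embedding $\HH_s\hookrightarrow\HH_1$ to obtain $\norm{L(\fp-\bar f)}_\HH\lesssim\Ro$, which feeds the basic inequality a bias term of order $\la$. With that bias, balancing against the variance $\Sigma^2\la^{-b}/m$ forces $\la\asymp m^{-1/(b+1)}$, and after your interpolation $\norm{\cdot}_\HH\le\norm{\cdot}_{-(p-1)}^{1/p}\norm{\cdot}_1^{1-1/p}$ you get the rate $m^{-1/(2p(b+1))}$. This is exactly the claimed rate \emph{only for $s=1$}; for $s\in(1,p+1]$ it is strictly slower than $m^{-s/(2(p+s-1+bp))}$. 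Plugging the stated choice $\la=m^{-p/(p+s-1+bp)}$ into your own bounds does not fix this: the bias term $\la$ then dominates and you end up with $m^{-1/(2(p+s-1+bp))}$, still missing the factor $m^{-(s-1)/(2(p+s-1+bp))}$. The ``secondary, milder difficulty'' you flag at the end is therefore not mild---it is where the argument fails.

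What is missing is a refined treatment of the penalty difference that actually extracts the smoothness $s$. One writes
\[
\norm{L(\fp-\bar f)}_\HH^2-\norm{L(\fz-\bar f)}_\HH^2=-\norm{\fz-\fp}_1^2+2\inner{L(\fp-\bar f),L(\fp-\fz)}_\HH,
\]
and uses $\fp-\bar f=L^{-s}u$ together with Hilbert-scale duality and interpolation to bound the cross term by
\[
2\inner{L^{2-s}u,\fp-\fz}_\HH\;\le\;2\Ro\,\norm{\fp-\fz}_{2-s}\;\le\;2\Ro\,\norm{\fp-\fz}_{-(p-1)}^{(s-1)/p}\norm{\fp-\fz}_1^{(p+1-s)/p},
\]
valid precisely for $s\in[1,p+1]$ since then $2-s\in[-(p-1),1]$. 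Now the conditional stability estimate converts the weak-norm factor into $\norm{A(\fz)-A(\fp)}_\nu^{(s-1)/p}$, and a Young splitting lets you absorb the $\norm{\cdot}_1$ and residual pieces back into the left-hand side. This produces a bias of order $\la^{(p+s-1)/p}$ rather than $\la$, after which your balancing and interpolation give the correct exponent $s/(p+s-1+bp)$. The overall architecture you describe (basic inequality, concentration to pass from empirical to population quantities, conditional stability for the weak norm, interpolation in the scale) is right; it is only this one step---how the source condition enters the bias---that needs to be upgraded.
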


\vspace{0.3cm}

{\bf Optimality.} 
Now, we discuss the optimality of the convergence rates established in this section. To do so, we rely on the lower bounds for the linear inverse problem presented in~\cite{Rastogi23}, which pertain to a similar setting, albeit with a link condition.

First, we establish a connection between Assumption~\ref{Ass:A} (iv) and the link conditions~\ref{Ass:A.oper.L} (ii),~\ref{Ass:link}. The conjunction of Assumptions~\ref{Ass:A} (iv) and
\begin{equation*}
 \norm{A(f)-A(\fp)}_{\nu} \leq \beta\norm{f-\fp}_{\HH_{-(p-1)}}, \qquad \beta>0
\end{equation*}
leads to the following implication:
\begin{equation}\label{link.con}
   \norm{f-\fp}_{\HH_{-(p-1)}} \asymp  \norm{A(f)-A(\fp)}_{\nu}. 
\end{equation}
 Here, we write $\norm{B_1(f)}\asymp\norm{B_2(f)}$ if there exist $c_1 \leq c_2 < \infty$, such that 
    $c_1\norm{B_1(f)}\leq  \norm{B_2(f)} \leq c_2 \norm{B_1(f)}$.

Let us next introduce an additional assumption that is utilized in both \cite{Hanke95} and \cite{kaltenbacher08}:
\begin{assumption}\label{Ass:add}
For $f,\fp\in\da$,
  \[\norm{A(f)-A(\fp)}_{\nu} \asymp \norm{ A'(\fp)\paren{f-\fp}}_{\nu}.  \] 
\end{assumption}

The combination of Assumption~\ref{Ass:add} and equation~\eqref{link.con} leads to link condition~\ref{Ass:A.oper.L} (ii):
$$\norm{f-\fp}_{\HH_{-(p-1)}} \asymp \norm{A'(\fp)\paren{f-\fp}}_{\nu}.$$

Next, by substituting $f-\fp=L^{-1}h$, we derive:
$$\norm{L^{-1}h}_{\HH_{-(p-1)}} \asymp \norm{ A'(\fp)L^{-1}h}_{\nu}$$
which further implies:
$$\norm{h}_{\HH_{-p}} \asymp \norm{(L^{-1}\tp L^{-1})^{\frac{1}{2}} h}_{\nu},\quad \text{for} \quad \tp=(\ip A'(\fp))^*(\ip A'(\fp)).$$

The link condition~\ref{Ass:link} considered in the linear case coincides with the above inequality for $p=\frac{1}{2a}$.

Hence, through a comparison of the aforementioned link relationships, we deduce that we need to assign~$\frac{1}{2a}\leftarrow p$.

As reported in~\cite{Rastogi23} for the statistical inverse problem, assuming smoothness~$s$ in the source condition~\ref{Ass:source.HS}, and considering eigenvalue decay condition~\ref{Ass:eigen.decay} with parameter $b$, the optimal rate is of the order~$\paren{\frac{1}{\sqrt{m}}}^{\frac{s}{s-1 + \frac{1}{2a} +\frac{b}{2a}}}$. In this context, we have to assign~$\frac{1}{2a}\leftarrow p$. This yields a lower bound of the order
$$
 \paren{\frac{1}{\sqrt{m}}}^{\frac{s}{p+s-1+bp}}.
$$
This corresponds to the upper bound discussed in Theorems~\ref{err.upper.bound}, \ref{err.upper.bound.p}, demonstrating that the rates are of optimal order.


\section{Applications}\label{Sec:Applicaions}



Pharmacokinetic/pharmacodynamic (PK/PD) models are employed to predict the evolution of drug concentrations and effects in patients undergoing treatment. Covariate models tell us how patient attributes (covariates) influence model parameters and play a crucial role in PK/PD model construction. Common covariates are body weight, age, and levels of crucial biomarkers (e.g., plasma creatinine). The problem of interest is the determination of the functional relationship between covariates and parameters.  

\vspace{0.3cm}

{\bf Mechanistic model $G$.} 

In the two-compartment Pharmacokinetic model~\cite{Pilari10}, we consider the human body into two compartments: the central and peripheral compartments. It can be described by the system of ODEs:
\begin{align*}
  V_1\frac{dC_1}{dt}= & Q (C_2 - C_1)- \text{CL} \cdot C_1,\\
  V_2\frac{dC_2}{dt}= & Q (C_1 - C_2).
\end{align*}

The central and peripheral compartments contain drug concentrations represented by $C_1$ and $C_2$ in mg/L, respectively. These compartments have volumes $V_1$ and $V_2$ in L, and are connected by intercompartmental flow $Q$ in L/day, with clearance $\text{CL}$ in L/day. Initial conditions are set as $C_1 (0) = D_{\text{rel}} \cdot \frac{w}{V_1}$ and $C_2 (0) = 0$, where $D_{\text{rel}}$ is an intravenous bolus dose of 15 mg/kg body weight. The covariate-to-parameter relationship includes a maturation component ($\text{mat}(a)$) dependent on age $a$ and an allometric component dependent on weight $w$.
\begin{eqnarray*}
&V_1 =V_1^* \paren{\frac{w}{w_{\text{ref}}}}; &V_2 =V_2^* \paren{\frac{w}{w_{\text{ref}}}};\\
&Q=Q^* \paren{\frac{w}{w_{\text{ref}}}}^{\frac{3}{4}}; &\text{CL} =\text{CL}^* \cdot\text{mat}(a) \paren{\frac{w}{w_{\text{ref}}}}^{\frac{3}{4}}; 
\end{eqnarray*}
with reference body weight $w_{\text{ref}} = 70$ kg. The weight-normalized parameters $\paren{V_1^*, V_2^*, Q^*,\text{CL}^*}$ are unknown.

The model's parameters are denoted by $\theta = \paren{ V_1, V_2, Q, \text{CL}}$. The observed quantity
$$G(x,\theta) =\paren{\text{ln}~C_1(t_1),\ldots, \text{ln}~C_1(t_q)}$$
was measured at fixed time points $t_1,\ldots, t_q$. 

Hence, this problem can be described as a statistical inverse problem in~\cite{Hartung21}:
$$y_i = G(x_i,\theta_i)+\varepsilon_i, \qquad \theta_i=f(x_i),~~~~i\in \brac{1,\ldots, m},$$
where the observations consist of covariates (age $a$, weight $w$)~$x_i\in X \subset  \RR^2$ and (noisy) drug concentration measurements $y_i \in \RR^q$. There are unobserved parameters~$\theta_i\in \Theta\subset\RR^4$, along with a mechanistic model $G:X \times \Theta \to \RR^q$, which is a nonlinear function. The mapping~$f$ takes covariates to parameters. Moreover, the noise~$\varepsilon_i$ is assumed to be normally distributed centered noise.  

The mechanistic model~$G$ represents a solution to a system of ordinary differential equations (ODEs) governed by a two-compartment model observed at different time points $t_1,\ldots, t_q$. The direct dependence of $G$ on $x_i$ allows us to model personalized doses and focus on specific aspects of a covariate model. The operator~$G$ exhibits a nonlinear dependence on the parameters. Here, the aim is to estimate the unknown covariate model $f$.

Hartung et al.~\cite{Hartung21} used Tikhonov regularization to estimate this covariate model in RKHS. The kernel is chosen empirically. Frequently, a specific parametric category of covariate-to-parameter relationships (e.g., linear, exponential, etc.) is chosen on an ad hoc basis. Therefore, the authors conducted hypothesis tests to check the structure of the covariate model $f$, i.e., whether it belongs to this parametric class or nonparametric class as a kernelized Tikhonov regularized. Then, the authors assessed the effect of age on the clearance of a monoclonal antibody.

Now, we check the validity of the assumptions considered for Theorem~\ref{theo:non-lin.rates.gen.u} for the upper rates of convergence. First, we focus on the nonlinear structure assumptions~\ref{Ass:A.oper} and~\ref{Ass:A.oper-2} for the non-linear operator~$A(f)(x)=G(x,f(x))$. Let us record the following theorem from \cite{Valent87} regarding its properties.
\begin{theorem}[{\cite[Thm.~3.1]{Valent87}}]
Let us assume that~$\Theta$ is a nonempty, open, bounded subset of~$\RR^p$ with suitably smooth boundary and $X\subset \RR$. For any $f\in \HH=W^{k,2}(\RR)$ (see \cite[Sec.~1.3.5]{Saitoh}) we have
~$G\in \mathcal{C}^{k+1}(\RR \times \overline{\Theta})$.
Moreover, we observe that the operator~$A : \Ho \to \Ho$ is continuous and the Fr{\'e}chet derivative of~$A$ at~$f$ is given by
  \begin{equation*}
    [A'(f)g](x)=g(x) D_\theta G(x,f(x)),
  \end{equation*}
where~$D_\theta$ denotes the derivative of~$G$ with respect to the second coordinate.   
In addition, the Fr{\'e}chet derivative is Lipschitz continuous.
\end{theorem}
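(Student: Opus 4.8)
The argument splits into two halves: (a) establishing the regularity $G\in\mathcal{C}^{k+1}(\RR\times\overline{\Theta})$, and (b) deducing from it the mapping, continuity, differentiability and Lipschitz statements for the superposition operator $A(f)=G(\cdot,f(\cdot))$ on $\HH=W^{k,2}$ --- the latter being precisely Valent's theorem, whose proof I would reproduce. For (a): $G(x,\theta)$ is the value, at the fixed sampling times $t_1,\dots,t_q$, of the solution of the two--compartment ODE system, whose vector field and initial data are $\mathcal{C}^{k+1}$ (indeed analytic) in $(x,\theta)$ and in the state variable. By the classical smooth--dependence theorem for ODEs --- differentiating the variational equations up to order $k+1$ and using compactness of $\overline{\Theta}$ and of the relevant (bounded) covariate range $X$ --- the time--$t$ flow is jointly $\mathcal{C}^{k+1}$ in $(x,\theta)$ with all derivatives up to order $k+1$ bounded; evaluating at $t=t_j$ gives $G\in\mathcal{C}^{k+1}$, with bounded derivatives on $X\times\overline{\Theta}$.

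For (b), the first point is that for $k\geq 1$ the Sobolev embedding $W^{k,2}\hookrightarrow\mathcal{C}^{k-1}_b$ makes $W^{k,2}$ a Banach multiplication algebra, $\norm{uv}_{W^{k,2}}\leq c_k\norm{u}_{W^{k,2}}\norm{v}_{W^{k,2}}$. For $f\in\HH$ with range in $\overline{\Theta}$, Fa\`{a} di Bruno's formula writes each derivative $(A(f))^{(j)}$, $j\leq k$, as a finite sum of terms $\partial^\alpha G(\cdot,f(\cdot))\cdot f^{(a_1)}\cdots f^{(a_m)}$ with $a_1+\dots+a_m\leq k$; the crucial observation is that whenever some $a_i=k$ one necessarily has $m=1$, so such a term carries only a first--order $\theta$--derivative of $G$ multiplying $f^{(k)}\in L^2$, while in every other term each factor $f^{(a_i)}$ sits in $W^{1,2}\hookrightarrow L^\infty$. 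Hence every term lies in $L^2$, so $\norm{A(f)}_{W^{k,2}}\leq\Phi(\norm{f}_{W^{k,2}})$ with $\Phi$ locally bounded, i.e.\ $A$ maps $\HH$ into $\HH$. Continuity of $A$ then follows from the same expansion: if $f_n\to f$ in $W^{k,2}$ then $f_n^{(j)}\to f^{(j)}$ uniformly for $j\leq k-1$ and in $L^2$ for $j=k$, and, combined with uniform continuity of the bounded derivatives of $G$ on compacta, each term of the Fa\`{a} di Bruno expansion of $A(f_n)-A(f)$ tends to $0$ in $L^2$.

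For the Fr\'{e}chet derivative I would set $L_f g:=g\cdot\partial_\theta G(\cdot,f(\cdot))$; since $\partial_\theta G\in\mathcal{C}^k$ the multiplier lies in $W^{k,2}$ (by the previous step applied to $\partial_\theta G$), so $L_f\in\lh$ by the algebra estimate, uniformly for $f$ in bounded subsets of $\HH$. Writing the difference quotient as $t^{-1}(A(f+tg)-A(f))=\int_0^1\partial_\theta G(\cdot,f+stg)\,g\,ds$ and invoking continuity of the superposition operator attached to $\partial_\theta G\in\mathcal{C}^k$ (so $\partial_\theta G(\cdot,f+stg)\to\partial_\theta G(\cdot,f)$ in $W^{k,2}$, uniformly in $s\in[0,1]$, as $t\to 0$), one gets that $A$ is G\^{a}teaux differentiable with G\^{a}teaux derivative $L_f$. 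Moreover $f\mapsto L_f$ is continuous into $\lh$, since $\norm{L_f-L_{\tilde f}}_{\lh}\leq c_k\norm{\partial_\theta G(\cdot,f)-\partial_\theta G(\cdot,\tilde f)}_{W^{k,2}}\to 0$ as $\tilde f\to f$, again by continuity of that superposition operator. A continuous G\^{a}teaux derivative is Fr\'{e}chet, which yields both the stated formula $[A'(f)g](x)=g(x)\,D_\theta G(x,f(x))$ and the boundedness $\norm{A'(f)}_{\HH\to\HH}<\infty$.

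It remains to record the Lipschitz property. For the uses made of it (Assumptions~\ref{Ass:A.oper} and \ref{Ass:A.oper-2}, in particular the quadratic Taylor--remainder bound in the $\LL$--norm) it suffices to estimate $A'(f)-A'(\tilde f)$ as an operator from $\HH$ into $\LL$: by a first--order Taylor expansion in $\theta$,
\[ \partial_\theta G(\cdot,f)-\partial_\theta G(\cdot,\tilde f)=(f-\tilde f)\int_0^1\partial_\theta^2 G(\cdot,\tilde f+s(f-\tilde f))\,ds, \]
and since $\partial_\theta^2 G$ is bounded on $X\times\overline{\Theta}$ and $\norm{g}_{L^\infty}\leq c\norm{g}_{\HH}$, one obtains $\norm{A'(f)-A'(\tilde f)}_{\HH\to\LL}\leq c'\norm{f-\tilde f}_{\HH}$. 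The main obstacle, in my view, is the careful bookkeeping in the Fa\`{a} di Bruno estimates --- verifying that no term ever demands more than $k+1$ $\theta$--derivatives of $G$, that the top--order term costs only one, and (since the function spaces are nominally over all of $\RR$) that the relevant quantities decay at infinity, equivalently that one may work over a bounded covariate range $X$ on which constants belong to $W^{k,2}(X)$. Once this is settled, the remaining steps are routine consequences of the multiplication--algebra inequality and the continuity of superposition operators with $\mathcal{C}^k$ outer functions.
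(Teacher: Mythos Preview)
The paper does not prove this statement; it simply records it as a citation of \cite[Thm.~3.1]{Valent87} and then applies its conclusions, so there is no in-paper proof to compare against. Your outline is the standard argument for Nemytskii (superposition) operators on Sobolev spaces, and the key ingredients --- the Banach-algebra property of $W^{k,2}$ for $k>d/2$, Fa\`a di Bruno bookkeeping to control the top-order term, and the integral representation leading from G\^ateaux to Fr\'echet differentiability --- are correctly identified.

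Two minor remarks. First, the clause ``for any $f\in\HH$ we have $G\in\mathcal{C}^{k+1}$'' in the statement is best read as a \emph{hypothesis} on $G$ (this is how Valent phrases the abstract result), not as a conclusion; your part~(a), deriving it from smooth parameter-dependence of the two-compartment ODE flow, is a welcome addition specific to the PK/PD model but is not part of the cited theorem. Second, you establish Lipschitz continuity of $A'$ only as an operator $\HH\to\LL$, not $\HH\to\HH$. This is in fact the appropriate weakening: with merely $G\in\mathcal{C}^{k+1}$ one has $\partial_\theta^2 G\in\mathcal{C}^{k-1}$, which does not suffice for the $W^{k,2}\to W^{k,2}$ Lipschitz bound on the multiplier, whereas the downstream application (Assumption~\ref{Ass:A.oper-2}(ii) and the quadratic Taylor remainder) only needs the $\LL$-norm estimate you give.
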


Since the derivative of $G$ is Lipschitz continuous in the first coordinate and $\Theta$ is bounded. Therefore,
$$\abs{[A'(f)g](x)}\leq \abs{D_\theta G(x,f(x))} \abs{g(x)} \leq  c_G \norm{x} \abs{\inner{K_x,g}} \leq \kappa  c_G \norm{g}\norm{x}$$
implies that
$$\norm{A'(f)}\leq \kappa c_G.$$ 

Hence, the Fr{\'e}chet derivative is bounded. The Lipschitz continuity of the Fr{\'e}chet derivative of $A$ implies the condition~\ref{Ass:A.oper-2} (ii) (see \cite[Chapt.~10]{Engl96}). 

The normally distributed centered noise satisfies the assumptions~\ref{Ass:fp},~\ref{Ass:noise}. The Sobolev space $W^{k,2}(\RR)$ satisfies the assumption~\ref{Ass:Kernel}. In case the design measure satisfies the source condition~\ref{ass:source-non-lin} and the eigenvalue decay condition~\ref{Ass:eigen.decay} (i), we obtain the minimax rates of convergence for the parameter choice provided in Theorem~\ref{theo:non-lin.rates.gen.u}.


\section{Conclusions and outlook  for future directions }
\label{Sec:outlook}

Motivated by the growing interest in supervised inverse learning with random design, we hope to have presented a thorough introduction to this modern research field. To analyze both linear and non-linear inverse problems, we have defined a general mathematical framework within reproducing kernel Hilbert spaces,  allowing us to appropriately consider the stochastic nature of the data.

\vspace{0.2cm}

For tackling linear inverse problems, we adopt a least squares approach with various types of regularization: spectral regularization, regularization by projection, and convex regularization. We discuss optimal rates of convergence for our estimators, initially dependent on unknown structural model assumptions such as the smoothness of the ground truth and the eigenvalue decay of the uncentered covariance operator (both describing the degree of ill-posedness of the inverse problem), ultimately leading to fast rates of convergence. To prevent overfitting, a proper choice of the regularization parameter is crucial. Since a-priori choices are not feasible in practical applications, we gave a brief overview of common data-driven approaches.

\vspace{0.2cm}

While inverse learning with a known linear forward operator is well developed for a fair amount of regularization methods, much less is known for inverse learning with a known forward non-linear operator. We discussed error bounds only for Tikhonov regularization in both the classical setting and within Hilbert scales using the least squares loss. 
A Standard approach for handling non-linear problems is to consider linear approximations of the operator: Under the assumption of Fr\'{e}chet differentiability of $A$ and some Lipschitz conditions, the non-linearity is mild enough to obtain optimal bounds, matching those of the linear case.  Notably, in the case where $A$ is not  Fr\'{e}chet differentiable, error bounds can still be derived by means of conditional stability estimates.

\vspace{0.2cm}

Unfortunaltely, in the non-linear case, the Tikhonov regularized solution is not directly computable, and hence, other regularization approaches that take the non-linear structure into account and are efficiently implementable need to be analyzed further.  Due to their practicability, the hope is to successfully apply kernel methods also to obtain approximate solutions that maintain statistical optimality. A major focus should be on the development of data-driven approaches that are also efficient, such as early stopping rules. We hope that our contribution can provide an impetus to further research in this area.




\addcontentsline{toc}{section}{List of symbols and notation}

\vspace{1cm}

{\bf \Large List of symbols and notation}

\vspace{0.5cm}

{\bf Samples and sample space:}
\begin{itemize}
\item $X=\RR^d$ input space, $Y=\RR$ output space, $Z=X\times Y$ sample space
\item $\xx=(x_1,\ldots,x_m)$ input data, $\yy=(y_1,\ldots,y_m)$ output data and $\zz=((x_1,y_1),\ldots,(x_m,y_m))$ full training data
\item $m$ sample size
\end{itemize}

\vspace{0.3cm}

{\bf Spaces and norms:}
\begin{itemize}
    \item $\Ho$, $\Ht$ Hilbert spaces with inner products $\inner{\cdot,\cdot}_{\Ho}$ and $\inner{\cdot,\cdot}_{\Ht}$, respectively
    \item  $(\HH_a)_{a \in \mbr}$, Hilbert scale with the inner product $\inner{\cdot,\cdot}_a$ and norm $||\cdot||_a$ 
    \item  $||\cdot||$ operator norm
    \item  $||\cdot||_2$ euclidean norm
    \item  $\LLL^p(X, \nu)$  space of $p$-integrable functions on $X$, the $\LL$-norm is denoted by $||\cdot||_\nu$
\end{itemize}

\vspace{0.3cm}

{\bf Operators:}
\begin{itemize}
\item $\mathcal{D}(A)$ domain of an operator $A$, $\range(A)$ range of an operator $A$
    \item $A: \mathcal{D}(A) \subset\cH \to \cH'$ linear/nonlinear forward operator
     \item $A'(f)$ Fr{\'e}chet derivative of $A$ at $f$
    \item  $L$ unbounded operator generating a Hilbert scale
    \item $\ip : \HH' \hookrightarrow \LL$ canonical injection operator  
    \item  $\bp=\ip \circ A'(f)$,  $\tp=\bp^*\bp$ uncentered covariance operator
    \item  $\sx$ sampling operator, $\bx= \sx \circ A'(f)$, $\tx=\bx^*\bx$ 
    \item $P_n$ orthogonal projection onto $V_n$, an  $n$-dimensional subspace of $\Ho$
    \item  $B^\dagger$ pseudoinverse of an operator $B$  
   
\end{itemize}

\vspace{0.3cm}

{\bf  Measures:}
\begin{itemize}
  \item $\rho$ is the distribution of $(x,y)$ 
  \item $\nu$ marginal distribution on $X$
  \item $\cP(X)$ class of marginal distribution on Borel $\sigma$-field $\cB(X)$
  \item $\cP_{\text{poly}}^{<} (\beta, b)$, $\cP_{\text{poly}}^{>}(\alpha , b)$, $\cP_{\text{str}}^{>}(\gamma)$ sub-classes of $\cP(X)$
  \item  $\mathcal{M}_\theta$ model classes
\end{itemize}

\vspace{0.3cm}

{\bf Functions:}
\begin{itemize}
 \item $\fp$, $\gp$: ground truths with $\dagger$ symbol
    \item $\fbar$ initial guess of the ground truth
     \item    $\ga$ spectral regularization
    \item   $\fhat$ estimated solution based on samples
    \item   $\fz$ regularized solution 
    \item   $d_p$ distance function 
\end{itemize}

\vspace{0.3cm}

{\bf Parameter: }
\begin{itemize}
 \item $\varepsilon$ observation noise
  \item  $\kappa^2$ upper bound for the reproducing kernel $K$ on $X$
   \item $\sigma_i$, $\sigma_{\min}$ eigenvalues/spectral decay etc 
    \item $t$ spectral decay of $\tp$
    \item $\la$  the regularization parameter
    \item    $b$ eigenvalue decay parameter
    \item $a$ and $p$ for link condition
    \item  $a$, $p$ parameters in link condition and $p$ is also the benchmark smoothness in distance function
    \item $q$ qualification of the regularization
    \item $\Sigma$, $M$ parameters in the noise condition~\ref{Ass:noise}
     \item $\Ro$ norm bound
    \item $s,p,r$ all relevant to smoothness in source conditions
\end{itemize}


\bibliographystyle{plain}
\bibliography{references}

\end{document}